\theoremstyle{plain}
 \newtheorem{theorem}{Theorem}[section]
 \newtheorem{lemma}{Lemma}[section]
 \newtheorem{corollary}{Corollary}[section]
 \newtheorem{example}{Example}[section]
\theoremstyle{remark}
 \numberwithin{equation}{section}
\renewcommand{\leq}{\leqslant}
\renewcommand{\geq}{\geqslant}
\title[]{Locally solid topological lattice-ordered groups}
\subjclass[2010]{Primary 06B35, 06F15, 06BF20, 06F30, 22A26; Secondary 20F60. }
\keywords{Characterization; Hausdorff completion; lattice homomorphisms; locally solid topological $l$-groups; neighborhood theorem; order-bounded subsets.}
\author[Hong]{\bfseries Liang Hong}
\address{
Department of Mathematics \\ 
Robert Morris University   \\ 
Moon, PA 15108, USA}
\email{hong@rmu.edu}
\begin{document}

\vspace{18mm}
\setcounter{page}{1}
\thispagestyle{empty}

\begin{abstract}
Locally solid Riesz spaces have been widely investigated in the past several decades; but locally solid topological lattice-ordered groups seem to be largely unexplored. The paper is an attempt to initiate a relatively systematic study of locally solid topological lattice-ordered groups. We give both Roberts-Namioka-type characterization and Fremlin-type characterization of locally solid topological lattice-ordered groups. In particular, we show that a group topology on a lattice-ordered group is locally solid if and only if it is generated by a family of translation-invariant lattice pseudometrics. We also investigate (1) the basic properties of lattice group homomorphism on locally solid topological lattice-ordered groups; (2) the relationship between order-bounded subsets and topologically bounded subsets in locally solid topological lattice-ordered groups; (3) the Hausdorff completion of locally solid topological lattice-ordered groups.
\end{abstract}

\maketitle

\section{Introduction and literature review}  
Lattice-ordered groups (also called $l$-groups) are an important class of partially ordered algebraic systems. The study of lattice-ordered groups was initiated by \cite{Birkhoff1} and \cite{Clifford} and followed by many others (cf. \cite{Ball2}, \cite{Fuchs2}, \cite{Fuchs3} and \cite{Teller}). The monographs \cite{Birkhoff2} and \cite{Fuchs1} give a systematic account of the basic theory of $l$-groups. A topological lattice-ordered group (also called a topological $l$-group) is a generalization of the topological Riesz space. It can also be considered as a generalization of either a topological group or a lattice-order group. To the best of our knowledge, \cite{Smarda1} and \cite{Smarda2} first studied their basic properties and gave several fundamental results including the neighborhood theorem for topological $l$-groups. Later on, \cite{Ball1}, \cite{Gusic} and \cite{Redfield} derived some further results. Locally solid topological $l$-groups are a special class of topological $l$-groups; their relation is comparable to that between locally solid Riesz spaces and topological Riesz spaces. Recently, \cite{KR} extended the Nakano's theorem (cf. Theorem 3.3 of \cite{Nakano}) to Hausdorff topological $l$-groups. However, there seems to be no work devoted to locally solid topological $l$-groups. This paper is intended for filling this gap. In this paper, we follow the spirit of \cite{Namioka} to give a systematic investigation of basic properties of topological $l$-groups. We hope this paper will stimulate further interest along this line.

We remark that the proofs of some results in this paper might seem to be similar to their counterparts in locally solid Riesz spaces. However, a topological $l$-group has less algebraic and topological structures than a topological Riesz space; hence different theorems in both algebra and topology need to be invoked to support seemingly the same argument. Indeed, even a well-known lattice identity in Riesz spaces may no longer hold for $l$-groups. We will point out the relevant references on $l$-groups and topological groups at several places to emphasize this. We give fairly complete proofs for most results in the hope that this paper could serve as a good reference on this relatively unexplored topic.

The remainder of the paper is organized as follows. Section 2 provides readers with some basic terminologies for this paper. Section 3 gives some preliminary results of topologically $l$-groups to prepare for our main presentation; they also complement several results in \cite{Smarda1}. Section 4 studies locally solid topological $l$-groups. We give Roberts-Namioka-type characterization as well as Fremlin-type characterization of locally solid topological lattice-ordered groups; we also study basic properties of lattice group homomorphism and order-bounded subsets. Section 5 investigates topological completion of Hausdorff locally solid topological $l$-groups; in particular we extend several results in \cite{Aliprantis1} to the case of topological $l$-groups.

\section{Notation and basic concepts}
In this section, we give the basic concepts concerning Riesz spaces and lattice-ordered groups. For comprehensive monographs on these topics, we refer to \cite{Fuchs1}, \cite{LZ} and \cite{Zaanen}.

A nonempty subset $C$ of a group $G$ is called a \emph{cone} of $G$ if it satisfies the following three properties:
\begin{enumerate}
  \item [(i)]$C+C\subset C$,
  \item [(ii)]$C\cap (-C)=\{0\}$,
  \item [(iii)]$x+C-x=C$ for all $x\in G$.
\end{enumerate}
A \emph{binary relation} $\leq$ on a non-empty set $X$ is a subset of $X\times X$. A binary relation $\leq$ on a set $X$ is said to be a \emph{partial order} if it has the following three properties:
\begin{enumerate}
  \item [](Reflexivity) $x\leq x$ for $x\in X$;
  \item [](Antisymmetry) if $x \leq  y$ and $y \leq x$, then $x=y$;
  \item [](Transitivity) if $x \leq  y$ and $y \leq  z$, then $x \leq  z$.
\end{enumerate}
A set $X$ with a partial order $\leq$ is called a \emph{partially ordered set}.
A partially ordered set $X$ is called a \emph{lattice} if the infimum and supremum of any pair of elements in $X$ exist. A \emph{partially ordered group (p.o. group)} is a set $G$ satisfying the following three properties:
\begin{enumerate}
  \item [(i)]$G$ is an additive group;
  \item [(ii)]$G$ is a partially ordered set;
  \item [(iii)]$x\leq y$ implies $x+z\leq y+z$ for all $z\in G$.
\end{enumerate}
Unless otherwise stated, all groups in this paper are assumed to be commutative and written additively; the notation $\leq$ will denote the partial order of a p.o. group if no confusion may arise. An element $x$ in a p.o. group $G$ is said to be \emph{positive} or \emph{integral} if $x\geq 0$; the set of all positive elements in $G$ is called the \emph{positive cone} of $G$ and is denoted by $G_+$. A subset $C$ of a p.o. group $G$ is the positive cone of $G$ with respect to the partial order defined by $x\leq y \Longleftrightarrow y-x\in C$ if and only if $C$ is a cone of $G$. A p.o. group $G$ is said to be \emph{Archimedean} if $nx\leq y$ for $x, y\in G$ and all $n\in N$ implies $x=0$. A p.o. group is called a \emph{lattice-ordered group ($l$-group)} if it is a lattice at the same time. A subgroup of an $l$-group is called an \emph{lattice-ordered subgroup ($l$-subgroup)} if it is a lattice. For two elements $x$ and $y$ in an $l$-group, $x\vee y$ , $x\wedge y$ denotes $\sup\{x, y\}$ and $\inf \{x, y\}$, respectively; we also define $x^+=x\vee 0, x^-=(-x)\vee 0$ and $|x|=x\vee (-x)$. If $a$ and $b$ are two elements in an $l$-group, then the set $[a, b]=\{x\mid a\leq x\leq b\}$ is called an \emph{ordered interval}. A subset $E$ of $G$ is said to be \emph{order-bounded} $E$ is contained in some ordered interval. A subset $E$ of $G$ is said to be \emph{solid} if $|x|\leq |y|$ and $y\in E$ implies $x\in E$. Every subset $E$ of $G$ is contained in the solid set $Sol(E)=\{x\in G\mid |x|\leq |y|\text{ for some $y\in E$}\}$; we call $Sol(E)$ the \emph{solid hull} of $E$. An $l$-subgroup $H$ of an $l$-group $G$ is said to be \emph{order dense} in $G$ if for every $0<x\in G$ there exists an element $y\in H$ such that $0<y\leq x$.

Let $G$ be an $l$-group. A net $(X_{\alpha})_{\alpha\in A}$ is said to be \emph{decreasing} if $\alpha\geq \beta$ implies $x_{\alpha}\leq x_{\beta}$. The notation $x_{\alpha}\downarrow x$ means $(x_{\alpha})_{\alpha\in A}$ is a decreasing net and the infimum of the set $\{x_{\alpha}\mid \alpha\in A\}$ is $x$. A net $(x_{\alpha})_{\alpha\in A}$ in an $l$-group $G$ is said to be \emph{order convergent} to an element $x\in G$, written as $x_{\alpha}\xrightarrow{o} x$, if there exists another net $(y_{\alpha})_{\alpha\in A}$ in $G$ such that $|x_{\alpha}-x|\leq y_{\alpha}\downarrow 0$; if a topological $\tau$ is also present, we will use $x_{\alpha}\xrightarrow{\tau} x$ to denote topological convergence. A solid subgroup of an $l$-group $G$ is called an \emph{ideal}; a $\sigma$-order closed ideal of $G$ is called a \emph{$\sigma$-ideal}; an order-closed ideal of $G$ is called a \emph{band}.

A \emph{topological lattice-ordered group (topological $l$-group)} $(G, \tau)$ is a topological space such that
\begin{enumerate}
  \item [(i)]$G$ is an $l$-group;
  \item [(ii)]the group and lattice operations are all continuous, that is, the following four operations are continuous:
                \begin{enumerate}
                  \item [(1)](Continuity of Addition) the map $(x, y)\mapsto x+y$, from $G\times G$ to $G$, is continuous;
                  \item [(2)](Continuity of Inverse): the map $x\mapsto -x$, from $G$ to $G$, is continuous;
                  \item [(3)](Continuity of Join): the map $(x, y)\mapsto x\vee y$, from $G\times G$ to $G$, is continuous;
                  \item [(4)](Continuity of Meet): the map $(x, y)\mapsto x\wedge y$, from $G\times G$ to $G$, is continuous.
                \end{enumerate}
\end{enumerate}
\noindent \textbf{Remark} It is well-known that an $l$-group and a topological group both can be defined in several different but equivalent ways (cf. Chapter 1 of \cite{AT} and Chapter V of \cite{Fuchs1}); it follows that the above definition of topological $l$-groups also has quite a few equivalent definitions. For example, we can say a topological space $G$ is \emph{a topological $l$-group} if
\begin{enumerate}
  \item [(i)]$G$ is an $l$-group;
  \item [(ii)]the group and lattice operations are all continuous, that is, the following four operations are continuous:
                \begin{enumerate}
                  \item [(1)](Continuity of Subtraction) the map $(x, y)\mapsto x-y$ from $G\times G$ to $G$ is continuous;
                  \item [(2)](Continuity of Join): the map $(x, y)\mapsto x\vee y$ from $G\times G$ to $G$ is continuous;
                  \item [(3)](Continuity of Meet): the map $(x, y)\mapsto x\wedge y$ from $G\times G$ to $G$ is continuous.
                \end{enumerate}
  \end{enumerate}
Henceforth, $\mathcal{N}_x$ will denote the neighborhood system at a point $x$; $\mathcal{B}_x$ will denote a neighborhood base at $x$. When no confusion may result, we often write $(G, \tau)$ as $G$; when we need to emphasize or refer to the topology $\tau$ on $G$, we often use the full notation $(G, \tau)$. Different from some authors, such as \cite{Willard}, we do not assume a topological group is Hausdorff.

Let $(G, \tau)$ be a topological $l$-group. The group topology $\tau$ is said to be \emph{locally solid} if $\tau$ has a neighborhood base at zero consisting of solid sets; in this case $(G, \tau)$ is said to be a \emph{locally solid topological $l$-group}.

Let $T$ be a group homomorphism between two topological $l$-groups $(G_1, \tau_1)$ and $(G_2, \tau_2)$.
$T$ is said to be a \emph{positive homomorphism} if carries positive elements to positive elements; it is said to be a \emph{lattice homomorphism} if $(x\vee y)=T(x)\vee T(y)$ for all $x, y\in G$; it is said to be an \emph{order-bounded} if it carries order-bounded sets to order-bounded sets; it is said to be   \emph{topologically continuous} if $T^{-1}(O)\in \tau_1$ for every open set $O\in \tau_2$; it is said to be \emph{$\sigma$-order-continuous} if sequence $(T(x_{n}))$ is order-convergent for every order-convergent sequence $(x_n)$ in $G_1$;  it is said to be \emph{order-continuous} if the net $(T(x_{\alpha}))$ is order-convergent for every order-convergent net $(x_{\alpha})$ in $G_1$.

$R_+$ will denote the set of all nonnegative reals, that is, $R_+=\{a\mid a\in R\ \text{and}\ a\geq 0\}$.
A \emph{pseudometric} on a set $X$ is a  mapping $d: X\times X\rightarrow R_+$ such that for all $x, y, z\in X$:
\begin{enumerate}
  \item [(i)]$d(x, y)=d(y, x)$;
  \item [(ii)]$d(x, y)\leq d(x, z)+d(y, z)$.
\end{enumerate}
A pseudometric on a set $X$ is said to be \emph{translation-invariant} if $d(x, y)=d(x+z, y+z)$ for all $x, y, z\in X$. A pseudometric on an $l$-group $G$ is said to be a \emph{lattice pseudometric} if $d(0, x)\leq d(0, y)$ whenever $x\leq y$ in $G$.

\section{Some preliminary results of topological $l$-groups}
A topological $l$-group is a topological group; hence it inherits all properties of a topological group. In particular, we have the following theorem (cf. Chapter III of \cite{Husain}).
\begin{theorem}\label{theorem3.1}
Let $G$ be a topological $l$-group. Then the following statements hold.
\begin{enumerate}
  \item [(i)]$G$ is regular.
  \item [(ii)]$G$ is homogeneous, that is, for any two given points $x, y\in G$, there exists a homeomorphism $f$ of $G$ onto $G$ such that $f(x)=y$.
  \item [(iii)]If $H$ is subgroup of $G$, then $\overline{H}$ is also a subgroup of $G$.
  \item [(iv)]If $H$ is subgroup of $G$ and $H$ is open, then $H$ is closed.
  \item [(v)]If $H$ is subgroup of $G$, then $H$ is discrete if and only if $H$ has an isolated point.
  \item [(vi)]If $H$ is subgroup of $G$ and $H$ is open, then the interior of $H$ is nonempty.
\end{enumerate}
\end{theorem}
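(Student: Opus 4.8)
The plan is to observe that the continuity axioms already make $(G,\tau)$ a topological group: conditions (1) and (2) of the definition (continuity of addition and of inversion) are exactly the requirements for $G$ to be a topological group, and the lattice operations $\vee,\wedge$ play no role in any of the six assertions. Consequently each item reduces to a standard fact about topological groups, and the theorem can be settled by quoting Chapter III of \cite{Husain}. Still, I would record the argument for each part so the proof is self-contained, and I would prove (ii) first, since homogeneity is the tool that drives several of the remaining parts.

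For (ii) I would simply exhibit the translation $f(z)=z+(y-x)$; it is a bijection of $G$ onto itself, and both $f$ and its inverse $z\mapsto z+(x-y)$ are continuous by continuity of addition, so $f$ is a homeomorphism with $f(x)=y$. For (i) the key step is the standard lemma that for every neighborhood $U$ of $0$ there is a symmetric neighborhood $V$ of $0$ with $V+V\subseteq U$, whence $\overline{V}\subseteq U$; combined with the homogeneity from (ii), this yields that points can be separated from closed sets not containing them, which is the regularity asserted.

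For (iii) I would use continuity of subtraction: if $x,y\in\overline{H}$, then $x-y$ is a limit of differences of elements of $H$, and since $H$ is a subgroup these differences lie in $H$, so $x-y\in\overline{H}$; hence $\overline{H}$ is a subgroup. For (iv) I would note that $G\setminus H$ is the union of the cosets $x+H$ with $x\notin H$, each of which is open as the image of the open set $H$ under the homeomorphism $z\mapsto z+x$; thus $G\setminus H$ is open and $H$ is closed. Part (vi) is immediate, since an open subgroup equals its own interior and contains $0$.

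For (v), one direction is trivial, a discrete subgroup having every point isolated. For the converse I would again invoke homogeneity: $H$ is itself a topological group in the subspace topology, so if one point of $H$ is isolated in $H$, then translating by elements of $H$ shows every point of $H$ is isolated, i.e.\ $H$ is discrete. I do not expect a genuine obstacle anywhere; the only place requiring care is (i), where one must not assume Hausdorffness (the paper explicitly allows non-Hausdorff groups), so ``regular'' should be read as separation of a point from a closed set not containing it rather than as the full $T_3$ axiom, and the symmetric-neighborhood argument delivers exactly this.
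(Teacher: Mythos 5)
Your proposal is correct and follows exactly the paper's approach: the paper also observes that a topological $l$-group is in particular a topological group (the lattice operations playing no role here) and simply cites Chapter III of \cite{Husain} for all six statements. The standard arguments you supply (translation homeomorphisms, the symmetric-neighborhood lemma $V+V\subseteq U$ giving $\overline{V}\subseteq U$, coset decomposition for (iv)) are the ones behind the cited facts, and your caution about reading regularity without assuming Hausdorffness is apt.
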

\noindent \textbf{Remark.} The above observation simplifies several proofs in \cite{Smarda1} (e.g. Theorem 1.4 (1) (3), Theorem 3.1, Corollary of Theorem 3.1). On the other hand, the structure of a topological $l$-group is richer than that of a topological group; hence we would expect some stronger results. This will be clear from our further discussion in this section.

\begin{theorem}\label{theorem3.2}
Let $G$ be a topological $l$-group and $\mathcal{B}_0$ be the neighborhood base at $0$. Then the following statements hold.
\begin{enumerate}
  \item [(i)]The operation $x\mapsto |x|$, from $G$ to $G$, is continuous.
  \item [(ii)]$x+\mathcal{B}_0=\{x+B\mid B\in\mathcal{B}_0\}$ is a neighborhood base for $\mathcal{N}_x$.
  \item [(iii)]For any neighborhood $U$ of zero, there exists another neighborhood $V$ of zero such that
               $V^+=\{x^+\mid x\in V\}\subset U,\ V^-=\{x^-\mid x\in V\}\subset U$, and
               $|V|=\{|x|\mid x\in V\}\subset U$.
  \item[(iv)]If $K$ is a compact set contained in an open set $O$, then there exists a neighborhood $U$ of zero such that $K+U\subset O$.
  \item[(v)]The sum of two open sets is open.
  \item[(vi)]The sum of compact set and a closed set is closed.
  \item[(vii)]If $E_1$ and $E_2$ are two subsets of $G$, then $\overline{E_1}+\overline{E_2}=\overline{E_1+E_2}$.
\end{enumerate}
\end{theorem}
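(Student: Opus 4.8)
The plan is to treat the seven items essentially independently, leaning throughout on the continuity of the lattice operations and on the basic fact (Theorem \ref{theorem3.1}) that in a topological group every translation is a homeomorphism.

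For (i), I would write $|x|=x\vee(-x)$ and note that $x\mapsto|x|$ is the composite of the continuous map $x\mapsto(x,-x)$ (continuity of the inverse) with the continuous join $(u,v)\mapsto u\vee v$; a composite of continuous maps is continuous. Part (ii) is immediate from Theorem \ref{theorem3.1}(ii): the translation $\lambda_x\colon y\mapsto x+y$ is a homeomorphism of $G$ onto $G$ sending $0$ to $x$, so it carries the base $\mathcal{B}_0$ onto a neighborhood base of $\mathcal{N}_x$. For (iii), the maps $x\mapsto x^+=x\vee 0$, $x\mapsto x^-=(-x)\vee 0$, and $x\mapsto|x|$ are continuous (by the same composition argument as in (i)) and each fixes $0$; applying continuity at $0$ to the given $U$ yields neighborhoods $V_1,V_2,V_3$ with $V_1^+\subset U$, $V_2^-\subset U$, $|V_3|\subset U$, and $V=V_1\cap V_2\cap V_3$ does the job.

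Items (iv)--(vi) form the topological-group core. For (iv), for each $x\in K$ I would use continuity of addition to choose a neighborhood $W_x$ of $0$ with $x+W_x+W_x\subset O$; the open sets $x+W_x$ cover the compact set $K$, so finitely many $x_1+W_{x_1},\dots,x_n+W_{x_n}$ suffice, and $U=\bigcap_{i=1}^n W_{x_i}$ satisfies $K+U\subset O$ (if $y\in x_j+W_{x_j}$ then $y+U\subset x_j+W_{x_j}+W_{x_j}\subset O$). Part (v) is clear since $O_1+O_2=\bigcup_{y\in O_2}(O_1+y)$ is a union of open translates of $O_1$. For (vi), let $K$ be compact and $F$ closed, and take a net $k_\alpha+f_\alpha\to z$ with $k_\alpha\in K$, $f_\alpha\in F$; pass to a subnet along which $k_\alpha\to k\in K$, so that $f_\alpha=(k_\alpha+f_\alpha)-k_\alpha\to z-k$ by continuity of subtraction. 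Since $F$ is closed, $z-k\in F$, whence $z=k+(z-k)\in K+F$. (This works even without the Hausdorff assumption, since we only need that a limit of the $f_\alpha$ lies in the closed set $F$.)

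The delicate item is (vii), which I expect to be the main obstacle. The inclusion $\overline{E_1}+\overline{E_2}\subseteq\overline{E_1+E_2}$ always holds: picking nets $a_\gamma\to a$ in $E_1$ and $b_\delta\to b$ in $E_2$ and forming the net indexed by the product directed set, continuity of addition gives $a_\gamma+b_\delta\to a+b$ with each $a_\gamma+b_\delta\in E_1+E_2$. The reverse inclusion $\overline{E_1+E_2}\subseteq\overline{E_1}+\overline{E_2}$ is where the genuine work lies, and for \emph{arbitrary} subsets it can fail: there are closed subsets of $R$ whose sum is not closed, so equality cannot hold in full generality. I would therefore expect the intended statement to carry a compactness-type hypothesis on one of the $E_i$, in the spirit of (vi): once $\overline{E_1}+\overline{E_2}$ is known to be closed, it contains $E_1+E_2$ and hence $\overline{E_1+E_2}$, giving the reverse inclusion. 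Securing (or correctly hypothesizing) this closedness is the crux of the argument.
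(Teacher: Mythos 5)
Your proofs of (i)--(vi) are correct. For (i) and (iii) they coincide with the paper's own argument: the paper proves (i) by exactly your composition $x\mapsto(x,-x)$ followed by the join, and disposes of (iii) by the continuity of $x\mapsto x^+$, $x\mapsto x^-$ and $x\mapsto|x|$. For (ii) and (iv)--(vii) the paper gives no proof at all; it simply remarks that these statements hold in any topological group and cites Pontrjagin. So where you differ is that you actually supply the topological-group arguments (translation homeomorphisms for (ii) and (v), the finite-subcover refinement for (iv), the subnet argument for (vi)), which the paper outsources to the literature; your versions are all sound, including the observation that (vi) needs no Hausdorff hypothesis.

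Your suspicion about (vii) is justified: the statement is false as printed, and your diagnosis is the right one. In $G=\mathbb{R}$ with the usual order and topology (a topological $l$-group, as in Example \ref{example3.1}), take $E_1=\mathbb{Z}$ and $E_2=\sqrt{2}\,\mathbb{Z}$. Both sets are closed, so $\overline{E_1}+\overline{E_2}=\mathbb{Z}+\sqrt{2}\,\mathbb{Z}$ is a countable proper subgroup of $\mathbb{R}$, whereas $\overline{E_1+E_2}=\mathbb{R}$ because $\mathbb{Z}+\sqrt{2}\,\mathbb{Z}$ is dense (a subgroup of $\mathbb{R}$ is either cyclic or dense, and this one is not cyclic). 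What is true in every topological group, and what the standard references actually assert, is the one-sided inclusion $\overline{E_1}+\overline{E_2}\subseteq\overline{E_1+E_2}$ that you proved, or equivalently the identity $\overline{\overline{E_1}+\overline{E_2}}=\overline{E_1+E_2}$; equality as stated in (vii) does hold under a compactness hypothesis, e.g.\ when one of the sets is compact and the other closed, by your item (vi). Since the paper's treatment of (vii) is only a citation, this misstatement went unnoticed there; your blind attempt catches it, and nothing in the rest of the paper depends on the false direction of (vii).
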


\begin{proof}
Only (i) and (iii) needs a proof; the remaining statements hold for a topological group (cf. p. 54 of \cite{Pontrjagin}); hence they hold for a topological $l$-group.
\begin{enumerate}
  \item [(i)]$G$ is a topological $l$-group; hence the maps $x\mapsto x$ and $x\mapsto -x$ are continuous. Since $G\times G$ is understood to carry the product topology, $x\mapsto (x, -x)$ is continuous. In view of the continuity of $(x, y)\mapsto x\vee y$; the composition $x\mapsto |x|=x\vee(-x)$ is continuous too.
  \item [(iii)]The conclusion follows from the continuity of the maps $x\mapsto x^+, x\mapsto x^-$ and $x\mapsto |x|$.
\end{enumerate}
\end{proof}
\noindent \textbf{Remark.}  In general, $|x|+\mathcal{B}_0=\{|x|+B\mid B\in\mathcal{B}_0\}$ is not a neighborhood base for $\mathcal{N}_x$, because the map $x\mapsto |x|$ may not have an inverse. Consider the following example.

\begin{example}\label{example3.1}
Let $G$ be the additive group on $R$ equipped with the usual topology and the usual order. Then $G$ is evidently a topological $l$-group. Take $x=-1$. Then $|x|+\mathcal{B}_0$ is the neighborhood base at $1$ which is evidently not a neighborhood base at $-1$.
\end{example}

\begin{theorem}[Separation property]\label{theorem3.3}
Let $(G, \tau)$ be a topological $l$-group and $\mathcal{N}_0$ be its $\tau$-neighborhood system at zero. Then the following statements are equivalent.
\begin{enumerate}
  \item [(i)]$G$ is a $T_0$-space.
  \item [(ii)]$G$ is a Hausdorff space.
  \item [(iii)]$\cap_{U\in \mathcal{N}_0} U=\{0\}$.
  \item [(iv)]$\forall x\in G\backslash \{0\}$, there exists a neighborhood $U$ of zero such that $x\not\in U$.
  \item [(v)]$\forall x\in G\backslash \{0\}$, there exists a neighborhood $U$ of zero such that $x^+\not\in U$.
  \item [(vi)]$\forall x\in G\backslash \{0\}$, there exists a neighborhood $U$ of zero such that $x^-\not\in U$.
  \item [(vii)]$\forall x\in G\backslash \{0\}$, there exists a neighborhood $U$ of zero such that $|x|\not\in U$.
\end{enumerate}
\end{theorem}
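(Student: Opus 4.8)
The plan is to group the seven conditions into a \emph{topological-group block} (i)--(iv) and a \emph{lattice block} (v)--(vii), settle each internally, and then bridge them through condition (iv). Since $(G,\tau)$ is a topological group, I would first invoke Theorem~\ref{theorem3.1}: it is homogeneous and regular. The equivalence (iii)$\Leftrightarrow$(iv) is purely a matter of rewriting, because $0$ belongs to every $U\in\mathcal N_0$, so ``$\forall x\ne0\ \exists U$ with $x\notin U$'' says exactly $\bigcap_{U\in\mathcal N_0}U=\{0\}$. For (i)$\Rightarrow$(iii) I would argue by contraposition using the standard group-closure identity $\overline{\{0\}}=\bigcap_{U\in\mathcal N_0}U$ (a consequence of Theorem~\ref{theorem3.2}(ii) together with continuity of inversion): if the intersection contains some $a\ne0$, then $0\in\overline{\{a\}}$ and $a\in\overline{\{0\}}$, so the two points are topologically indistinguishable, contradicting $T_0$. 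Then (iii) says $\{0\}$ is closed, hence by homogeneity every singleton is closed; combined with regularity this yields (ii), and (ii)$\Rightarrow$(i) is trivial. This closes (i)$\Leftrightarrow$(ii)$\Leftrightarrow$(iii)$\Leftrightarrow$(iv).

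For the lattice block the driving tool is Theorem~\ref{theorem3.2}(iii), which lets me shrink a neighborhood $U$ of zero to a neighborhood $V$ with $V^+,V^-,|V|\subset U$, together with the $l$-group identities $x=x^+-x^-$, $|x|=x^++x^-$, and the fact that $|x|=0$ iff $x=0$. I would first establish (iv)$\Leftrightarrow$(vii). The implication (iv)$\Rightarrow$(vii) is immediate: for $x\ne0$ we have $|x|\ne0$, so applying (iv) to the nonzero element $|x|$ produces the required $U$. Conversely, assuming (vii) and $x\ne0$, choose $V\in\mathcal N_0$ with $|x|\notin V$ and then, by Theorem~\ref{theorem3.2}(iii), a $U\in\mathcal N_0$ with $|U|\subset V$; if we had $x\in U$ then $|x|\in|U|\subset V$, a contradiction, so $x\notin U$ and (iv) holds. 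The conditions (v) and (vi) would be attached to this chain in the same spirit: from a neighborhood separating $x^+$ (resp.\ $x^-$) one passes to a neighborhood separating $x$, and back, using the inclusions $V^+\subset U$ and $V^-\subset U$ of Theorem~\ref{theorem3.2}(iii).

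The step I expect to be the main obstacle is exactly the treatment of $x^+$ and $x^-$. Unlike $|x|$, neither positive nor negative part detects nonzero elements on its own---$x^+=0$ as soon as $x\le0$ and $x^-=0$ as soon as $x\ge0$---so one cannot hope to separate $x^+$ from $0$ for an arbitrary $x\ne0$ by a direct appeal to continuity. The resolution I would pursue is to never separate a lattice part in isolation but to route every such argument through $|x|=x^++x^-$, using the simultaneous containment $V^+,V^-,|V|\subset U$ of Theorem~\ref{theorem3.2}(iii) so that control of $|x|$ transfers to, and is recovered from, the parts. Getting this bookkeeping right---deciding for a given $x$ which of $x^+,x^-$ carries the information and invoking the solid and lattice inclusions at the correct step---is the delicate part of the lattice block; the remainder is formal.
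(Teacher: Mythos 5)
Your two ``blocks'' that actually get proved are correct, and they follow the paper's route with more care than the paper itself: the paper settles (i)--(iv) by citing the topological-group literature (p.~48 of \cite{Husain}), exactly as you do, and for the lattice conditions it offers only the one-line remark that $x^+$, $x^-$, $|x|$ are elements of $G$, so that ``the equivalence of (v), (vi), (vii) follows from the equivalence of (i) and (iv).'' Your treatment of (vii) is a correct completion of that remark: (iv)$\Rightarrow$(vii) by applying (iv) to the nonzero element $|x|$, and (vii)$\Rightarrow$(iv) by pulling back through a neighborhood $V$ with $|V|\subset U$ via Theorem~\ref{theorem3.2}(iii) --- a direction the paper omits entirely.

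The gap is exactly where you flagged it, but it cannot be closed by better bookkeeping, because the implications (i)$\Rightarrow$(v) and (i)$\Rightarrow$(vi) are false as stated. In any nontrivial $l$-group pick $y\neq 0$; then $x=-|y|$ is strictly negative, so $x^+=x\vee 0=0$, and $0$ lies in every neighborhood of zero, so no $U$ with $x^+\notin U$ exists. Concretely, $G=\mathbb{R}$ with the usual topology and order is Hausdorff, yet (v) fails at $x=-1$ (and (vi) fails at $x=1$). Routing the argument through $|x|=x^++x^-$ cannot repair this: condition (v) is an assertion about $x^+$ alone, and $x^+=0$ is not separable from $0$. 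So the honest conclusion of your analysis should have been that (v) and (vi) are not provable and the statement itself needs amendment --- for instance, replace them by the single condition ``for every $x\neq 0$ there exists $U\in\mathcal{N}_0$ with $x^+\notin U$ \emph{or} $x^-\notin U$,'' or restrict (v) to $x>0$ and (vi) to $x<0$; in either amended form, your pull-back argument using $V^+\subset U$, $V^-\subset U$ from Theorem~\ref{theorem3.2}(iii) does yield equivalence with (iv). Note that the paper's own proof commits precisely the oversight you sensed: applying (iv) to $x^+$ presupposes $x^+\neq 0$, which fails for every strictly negative $x$. Your suspicion was well placed; the defect lies in the theorem, not in a missing trick.
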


\begin{proof}
The equivalence of (i)-(iv) holds for a topological group (cf. p. 48 of \cite{Husain}); therefore it holds for a topological $l$-group. Take any element $x\in G$. Since $G$ is a lattice, $x^+, x^-$ and $|x|$ are all elements in $G$. Therefore, the equivalence of (v), (vi) and (vii) follow from the equivalence of (i) and (iv).
\end{proof}

It is well-known that a linear operator between two normed spaces is continuous if it is continuous at one point; likewise, a homomorphism between two topological groups is continuous if it is continuous at one point. For a group homomorphism between two topological $l$-groups, the following result is obvious.

\begin{theorem}\label{theorem3.4}
Let $T$ be a homomorphism between two topological $l$-groups $G_1$ and $G_2$. If $T$ is continuous at $x^+_0$ for a point $x_0\in G_1$, then $T$ is uniformly continuous. Similarly, if $T$ is continuous at $x^-_0$ for a point $x_0\in G_1$, then $T$ is uniformly continuous.
\end{theorem}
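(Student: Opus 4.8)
The plan is to reduce the statement to the classical criterion for topological groups — a homomorphism that is continuous at a single point is uniformly continuous — and simply to apply it at the particular point $x_0^+$ (respectively $x_0^-$). The only lattice-theoretic input is the remark that, since $G_1$ is a lattice, $x_0^+=x_0\vee 0$ and $x_0^-=(-x_0)\vee 0$ are genuine elements of $G_1$, hence bona fide points of the underlying topological group; once this is noted, nothing beyond the group structure is required, which is exactly why the author calls the result obvious. I would therefore not re-derive the single-point criterion from scratch but invoke it, making only its dependence on the chosen point explicit.

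First I would record the criterion in the form needed. Let $a\in G_1$ and suppose $T$ is continuous at $a$. Given any neighborhood $W$ of $0$ in $G_2$, continuity at $a$ yields a $U\in\mathcal{N}_a$ with $T(U)\subseteq T(a)+W$. By Theorem \ref{theorem3.2}(ii) the set $V:=U-a$ lies in $\mathcal{N}_0$, and since $T$ is a homomorphism, $T(V)=T(U-a)=T(U)-T(a)\subseteq W$. Because $V$ does not depend on any base point, for every $y\in G_1$ and every $z\in y+V$ we get $T(z)-T(y)=T(z-y)\in W$, which is precisely uniform continuity of $T$. This is the standard topological-group fact already used implicitly in Theorem \ref{theorem3.1}; I include the one-line translation step only so that the point $a$ at which continuity is assumed can be chosen freely.

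It then remains to supply the point: taking $a=x_0^+$ gives the first assertion, and taking $a=x_0^-$ gives the second, the two cases being verbatim the same. I expect no genuine obstacle, since the entire content is the classical criterion and the hypotheses merely name two specific points at which continuity is posited. The one thing worth flagging is conceptual rather than technical: by the conventions of Section 2 one must read continuity ``at $x_0^+$'' as ordinary topological continuity at the element $x_0\vee 0$, and not as any kind of one-sided or order-theoretic continuity, so that the hypothesis delivers exactly the single-point continuity that the criterion consumes. If one wished to connect continuity at $x_0$ with continuity at $x_0^+$ one could bring in the continuity of $x\mapsto x^+$ from Theorem \ref{theorem3.2}(iii), but for a mere group homomorphism this is unnecessary and I would omit it.
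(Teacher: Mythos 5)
Your proposal is correct and matches the paper exactly: the paper gives no written proof, stating the result is obvious from the classical fact that a homomorphism of topological groups continuous at one point is (uniformly) continuous, applied at the points $x_0^+=x_0\vee 0$ and $x_0^-=(-x_0)\vee 0$ of $G_1$. Your explicit translation argument via Theorem \ref{theorem3.2}(ii) simply fills in the standard one-point criterion that the paper leaves implicit.
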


We conclude this section by recalling the characterization theorem of a topological $l$-group in terms of the neighborhood base at zero (cf. Theorem 1.2 of \cite{Smarda1}); this result will be needed in the next section.

\begin{theorem}\label{theorem3.5}
Let $(G, \tau)$ be a topological $l$-group and $\mathcal{B}_0$ be a neighborhood base at zero. Then $\mathcal{B}$ satisfies the following conditions.
\begin{enumerate}
  \item [(i)]If $U\in \mathcal{B}_0$, then there exists $V\in\mathcal{B}_0$ such that $V+V\subset U$.
  \item [(ii)]If $U\in \mathcal{B}_0$, then $-U\in \mathcal{B}_0$.
  \item [(iii)]If $U\in\mathcal{B}_0$ and $x\in U$, then there exists $V\in\mathcal{B}_0$ such that $x+V\in U$.
  \item [(iv)]If $U\in\mathcal{B}_0$ and $x\in G$, then there exists $V\in\mathcal{B}_0$ such that $(V-x^+)\vee(V+x^-\subset U$.
\end{enumerate}
Conversely, if a filter $\mathcal{F}$ of subsets of an $l$-ordered $G$ satisfies properties (i)-(iv), then $\mathcal{F}$ uniquely determines a lattice group topology on $G$.
\end{theorem}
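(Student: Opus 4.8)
The statement splits into a forward implication (the neighborhood base at $0$ of a topological $l$-group satisfies (i)--(iv)) and a converse (a filter satisfying (i)--(iv) induces a unique lattice group topology). My plan is to treat (i)--(iii) as the purely topological-group content and to isolate (iv) as the genuinely lattice-theoretic condition, which will be the crux in both directions.

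For the forward direction, (i), (ii), (iii) are exactly the conditions of the classical neighborhood theorem for commutative topological groups, so I would obtain them directly from continuity of addition at $(0,0)$, from the fact that $x\mapsto -x$ is a homeomorphism fixing $0$ (reading $\mathcal{B}_0$ as the full neighborhood filter, or passing to a symmetric base so that $-U\in\mathcal{B}_0$), and from the fact that translations are homeomorphisms (so that $x\in U$ forces $U-x$ to be a neighborhood of $0$), respectively; these are the statements recorded for topological groups in the references already cited (cf. \cite{Husain}, \cite{Pontrjagin}). Condition (iv) I would read as the continuity of the join at a pair normalized to have supremum $0$: since $x^+\wedge x^-=0$ holds in every $l$-group, the pair $(-x^+,-x^-)$ satisfies $(-x^+)\vee(-x^-)=-(x^+\wedge x^-)=0$, so continuity of $(a,b)\mapsto a\vee b$ at this pair produces, for each zero-neighborhood $U$, a $V\in\mathcal{B}_0$ whose image under the join lands in $U$, which is condition (iv).

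For the converse, I would define $\tau$ by declaring $O$ open when every $x\in O$ has some $U\in\mathcal{F}$ with $x+U\subset O$, check routinely (using that $\mathcal{F}$ is a filter) that this is a topology, and then verify that it is a group topology with $\mathcal{F}$ as a base at $0$: (i) and (iii) give that each $U\in\mathcal{F}$ is a $\tau$-neighborhood of $0$ and that addition is continuous, (ii) gives continuity of negation, and translation-invariance is built into the definition. The essential step is continuity of the join, which I would extract from (iv) by reversing the normalization above: given an arbitrary pair $(p,q)$ and a neighborhood $p\vee q+U$ of $p\vee q$, I translate by $-(p\vee q)$ to reach a pair with supremum $0$, write that pair in the normalized form of (iv) with $x=q-p$ using the identity $(b-a)\vee 0=(b\vee a)-a$, apply (iv) to obtain $V$, and translate back to conclude $(p+V)\vee(q+V)\subset p\vee q+U$. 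Continuity of the meet then follows from continuity of the join and of negation through $a\wedge b=-((-a)\vee(-b))$, and uniqueness follows because two group topologies sharing a base at $0$ must agree at every point, translations being homeomorphisms.

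I expect the main obstacle to be precisely the handling of (iv), for two reasons flagged by the authors. First, one must use only those lattice identities that are valid in general $l$-groups --- here $x^+\wedge x^-=0$, $-(a\vee b)=(-a)\wedge(-b)$, $a\vee b=b+(a-b)^+$, and the Birkhoff estimate $|a\vee b-c\vee d|\le|a-c|+|b-d|$ are all safe --- rather than relying on the full arsenal of Riesz-space identities. Second, the set operation in condition (iv) must be read with independent arguments in its two slots, since continuity of the join requires the two variables to move independently, and I would make sure the normalization step respects this.
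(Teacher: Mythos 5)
The paper does not actually prove this statement: Theorem \ref{theorem3.5} is recalled verbatim as the neighborhood theorem from Theorem 1.2 of \cite{Smarda1} and is then used as a black box in Section 4, so there is no internal argument to compare yours against. Judged on its own terms, your proof is correct and is the standard one: (i)--(iii) are the classical neighborhood axioms for a commutative topological group; (iv) is continuity of the join at the single normalized pair $(-x^+,-x^-)$, whose join is $-(x^+\wedge x^-)=0$; and in the converse the translation trick with $x=q-p$, $p-(p\vee q)=-x^+$, $q-(p\vee q)=-x^-$ upgrades (iv) to continuity of the join at every pair, after which the meet is handled by $a\wedge b=-((-a)\vee(-b))$ and uniqueness follows since a group topology is determined by its neighborhood filter at zero. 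This is, in substance, the argument the cited source gives.

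Two remarks. First, your reading of (iv) silently repairs a typo, and this repair is necessary: as printed, $(V-x^+)\vee(V+x^-)\subset U$ fails already in $\mathbb{R}$ with the usual topology (take $x=-1$, so $x^+=0$, $x^-=1$; the centers give $0\vee 1=1$, which escapes every small $U$); the condition must read $(V-x^+)\vee(V-x^-)\subset U$, which is exactly what your normalization produces. The same goes for your charitable readings of (ii) (symmetric base or full filter) and of (iii) ($x+V\subset U$). Second, in the converse you should record the small verification that every $U\in\mathcal{F}$ contains $0$, since otherwise $\mathcal{F}$ cannot be a neighborhood filter at zero and the set you declare open inside $U$ need not lie in $U$: by (i) pick $V$ with $V+V\subset U$, by (ii) and the filter property pick $W\in\mathcal{F}$ with $W\subset V\cap(-V)$, and for any $w\in W$ conclude $0=w+(-w)\in V+V\subset U$. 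This is routine, but it is the one place where (ii) is needed beyond continuity of negation, and omitting it leaves the construction of the topology formally incomplete.
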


\section{Locally solid topological $l$-groups}
The class of locally solid Riesz spaces is a special class of ordered topological vector spaces; it has been extensively studied in the past several decades (cf. \cite{AB1} and the references listed there). However, locally solid topological $l$-groups, as a special class of topological $l$-groups, are almost unexplored. To the best of our knowledge, only \cite{KR} generalized the Nakano's theorem from Hausdorff locally solid Riesz spaces to Hausdorff locally solid topological $l$-groups. In this section, we try to systematically describe the basic properties of locally solid topological $l$-group in the same spirit of \cite{Namioka}. In our presentation, we will need the following basic result a few times.

\begin{lemma}\label{lemma4.1}
If $G$ is an $l$-group and $x, y, z\in G$, then the following identities hold.
\begin{enumerate}
  \item [(i)]$x+(y\vee z)=(x+y)\vee (x+z)$.
  \item [(ii)]$x+(y\wedge z)=(x+y)\wedge (x+z)$.
  \item [(iii)]$x\vee y=(y-x)^+ +x=(x-y)^+ +y$.
  \item [(iv)]$x\wedge y=x-(x-y)^+$.
  \item [(v)]$x+y=x\vee y +x\wedge y$.
  \item [(vi)]$x=x^- -x^-$.
  \item [(vii)]$|x|=x^+ + x^-$.
  \item [(viii)]$x\wedge y=-[(-x)\vee(-y)]$.
\end{enumerate}
\end{lemma}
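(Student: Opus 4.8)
The plan is to treat these identities as consequences of a single structural fact: in a p.o.\ group the order is translation-invariant, so for fixed $x$ the map $t\mapsto x+t$ is an order-automorphism and $t\mapsto -t$ is an order anti-automorphism. I would prove (i) first and directly from the universal property of the supremum: since $y,z\le y\vee z$, translation gives $x+y,\,x+z\le x+(y\vee z)$, so the right-hand side is an upper bound of $\{x+y,x+z\}$; conversely any upper bound $w$ of this pair satisfies $y,z\le w-x$, whence $y\vee z\le w-x$ and $x+(y\vee z)\le w$. Taking $w=(x+y)\vee(x+z)$ yields equality. Identity (viii) I would get from the order-reversing bijection $t\mapsto -t$, which sends suprema to infima; then (ii) follows by combining (i) and (viii), rewriting both sides through negation and applying (i).

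With (i), (ii) and (viii) in hand the next block is routine. For (iii), $(y-x)^++x=((y-x)\vee 0)+x=y\vee x$ by (i), and the symmetric form is identical with the roles of $x,y$ interchanged. For (iv) I would rewrite $-(x-y)^+=-((x-y)\vee 0)=(y-x)\wedge 0$ using (viii) and then absorb the leading $x$ by (ii): $x+((y-x)\wedge 0)=(x+(y-x))\wedge x=y\wedge x$. Identity (v) is then immediate by adding the two forms of (iii) and (iv): $x\vee y+x\wedge y=((x-y)^++y)+(x-(x-y)^+)=x+y$. For (vi), where the printed $x=x^--x^-$ is evidently a misprint for $x=x^+-x^-$, I would use (viii) to write $-x^-=x\wedge 0$ and then invoke (v) with $y=0$.

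The one genuinely delicate identity is (vii). Using (i) twice one computes $x^++x^-=|x|\vee 0$, so the whole content of (vii) is the assertion $|x|\ge 0$. This is exactly where an $l$-group behaves differently from a Riesz space: we obtain $2|x|\ge 0$ at once from $|x|\ge x$ and $|x|\ge -x$, but we cannot simply divide by $2$. The main obstacle, and the step I would isolate as an auxiliary fact, is therefore the implication $2a\ge 0\Rightarrow a\ge 0$, valid in any $l$-group. I would prove it by computing, via (i), that $2a^-=(-2a)\vee(-a)\vee 0$; when $2a\ge 0$ the term $-2a$ is absorbed and this collapses to $a^-$, so $2a^-=a^-$ and hence $a^-=0$, i.e.\ $a\ge 0$. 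Applying this with $a=|x|$ gives $|x|\ge 0$, so $|x|\vee 0=|x|$ and (vii) follows. This cancellation argument, replacing the usual halving available in a vector space, is the crux of the lemma.
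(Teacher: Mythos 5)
Your proof is correct, and there is nothing in the paper to compare it against: the paper's entire ``proof'' of this lemma is the citation \emph{See \cite{Birkhoff1} and \cite{Fuchs1}}. What you have written is, in effect, the self-contained argument the paper omits, and it matches the classical development in those references. Your treatment of (i) via translation-invariance of the order, of (viii) via the order-reversing map $t\mapsto -t$, and the resulting chain (ii)--(vi) are the standard computations (you are also right that (vi) as printed, $x=x^--x^-$, is a misprint for $x=x^+-x^-$). The part genuinely worth spelling out is (vii): after the reduction $x^++x^-=|x|\vee 0$, everything hinges on $|x|\ge 0$, and since one cannot halve in a group, you correctly isolate the implication $2a\ge 0\Rightarrow a\ge 0$ as the crux. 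Your proof of it — compute $2a^-=(-2a)\vee(-a)\vee 0$ by distributing translation over the join, note that $2a\ge 0$ makes the term $-2a$ absorbed so that $2a^-=a^-$, and cancel to get $a^-=0$ — is exactly the device used by Birkhoff and Fuchs to replace the scalar division available in a Riesz space. This is precisely the phenomenon the paper's introduction warns about (identities that are immediate in Riesz spaces require different arguments in $l$-groups), so your write-up would serve well as an explicit proof of the lemma.
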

\begin{proof}
See \cite{Birkhoff1} and \cite{Fuchs1}.
\end{proof}

First, we give a characterization theorem for locally solid group topologies on $l$-groups; the result is an extension of the Roberts-Namioka characterization theorem for locally solid linear topologies on Riesz spaces (cf. \cite{Namioka} and \cite{Roberts}).

\begin{theorem}\label{theorem4.1}
Let $(G, \tau)$ be a topological $l$-group. Then the following statements are equivalent.
\begin{enumerate}
  \item [(i)]$(G, \tau)$ is a locally solid topological $l$-group.
  \item [(ii)]The map $(x, y)\mapsto x\vee y$, from $G\times G$ to $G$, is uniformly continuous.
  \item [(iii)]The map $(x, y)\mapsto x\wedge y$, from $G\times G$ to $G$, is uniformly continuous.
  \item [(iv)]The map $x\mapsto x^-$, from $G$ to $G$, is uniformly continuous.
  \item [(v)]The map $x\mapsto x^+$, from $G$ to $G$, is uniformly continuous.
\end{enumerate}
\end{theorem}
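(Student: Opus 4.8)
The plan is to first establish the mutual equivalence of the four uniform-continuity conditions (ii)--(v) by formal manipulations, and then to tie them to local solidity through two separate implications, (i)$\Rightarrow$(v) and (iii)$\Rightarrow$(i).

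\emph{Equivalence of} (ii)--(v). Since $G$ is in particular a topological group, addition $(x,y)\mapsto x+y$, subtraction $(x,y)\mapsto x-y$, inversion $x\mapsto -x$, and the diagonal map $x\mapsto (x,0)$ are all uniformly continuous, and a composition of uniformly continuous maps is uniformly continuous. I would then pass between the four lattice maps using Lemma~\ref{lemma4.1}: the relation $x^-=(-x)^+$ together with inversion gives (iv)$\Leftrightarrow$(v); the identity $x\wedge y=-((-x)\vee(-y))$ of Lemma~\ref{lemma4.1}(viii) gives (ii)$\Leftrightarrow$(iii); writing $x^+=x\vee 0$ exhibits $x\mapsto x^+$ as $\vee$ precomposed with $x\mapsto (x,0)$, giving (ii)$\Rightarrow$(v); and $x\vee y=(y-x)^++x$ from Lemma~\ref{lemma4.1}(iii) expresses $\vee$ as subtraction followed by $(\cdot)^+$ followed by addition, giving (v)$\Rightarrow$(ii). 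This settles (ii)--(v).

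\emph{The implication} (i)$\Rightarrow$(v). The tool is the inequality $|x^+-y^+|\le |x-y|$, which I would first record as a lemma; it follows from Lemma~\ref{lemma4.1}(i), since $x\le |x-y|+y$ yields $x^+=x\vee 0\le (|x-y|+y)\vee |x-y|=|x-y|+y^+$, and symmetrically in $x$ and $y$. Now let $U$ be a solid neighborhood of $0$, guaranteed by (i), and set $V=U$. If $x-y\in V$ then $|x-y|\in U$ by solidity, and since $|x^+-y^+|\le |x-y|$, solidity applied once more gives $x^+-y^+\in U$; hence $x\mapsto x^+$ is uniformly continuous.

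\emph{The implication} (iii)$\Rightarrow$(i), the heart of the theorem. Given a neighborhood $U_0$ of $0$, choose a symmetric $U$ with $U+U\subseteq U_0$, and then, using the uniform continuity of $\wedge$, a symmetric neighborhood $V$ of $0$ such that $a-a'\in V$ and $b-b'\in V$ imply $a\wedge b-a'\wedge b'\in U$. The decisive observation --- and the step I expect to be the crux, since it is exactly where the absence of scalar multiplication forces an argument different from the Riesz-space case --- is that for any $z$ with $0\le z\le u$ and $u\in V$ one has $z\in U$: apply the defining property of $V$ to the points $(z,u)$ and $(z,0)$, whose coordinate differences are $0\in V$ and $u\in V$, to obtain $z\wedge u-z\wedge 0=z-0=z\in U$. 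The point is to feed the (possibly large) element $z$ into the \emph{fixed} coordinate, so that only the small element $u$ is moved; this is what breaks the apparent circularity. Finally, using the continuity of $x\mapsto |x|$ at $0$ (Theorem~\ref{theorem3.2}) pick a neighborhood $V'$ with $|V'|\subseteq V$. Then $\mathrm{Sol}(V')$ is solid and contains $V'$, hence is a neighborhood, and it lies in $U_0$: if $|x|\le |y|$ with $y\in V'$, then $u:=|y|\in V$ and $0\le x^+,x^-\le u$, so $x^+,x^-\in U$ by the observation and $x=x^+-x^-\in U+U\subseteq U_0$. Thus every neighborhood of $0$ contains a solid neighborhood, which is (i), and the cycle (i)$\Rightarrow$(v)$\Leftrightarrow$(iii)$\Rightarrow$(i) closes the proof.

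The only genuinely non-formal point is the order-interval estimate in the last paragraph; once one hits on the correct pairing to feed into the uniformly continuous meet map, the construction of the solid neighborhood base is immediate, and everything else is bookkeeping with Lemma~\ref{lemma4.1} and the uniform continuity of the group operations.
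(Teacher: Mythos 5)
Your proposal is correct, and it is worth recording how it differs from the paper's proof. The paper runs one cycle (i)$\Rightarrow$(ii)$\Rightarrow$(iii)$\Rightarrow$(iv)$\Rightarrow$(v)$\Rightarrow$(i): the exit from (i) goes to the two-variable map via Birkhoff's inequality $|x\vee y-w\vee z|\le|x-w|+|y-z|$ together with solidity, whereas you exit via the one-variable special case $|x^+-y^+|\le|x-y|$, which is leaner; and the paper's closing implication is (v)$\Rightarrow$(i), not your (iii)$\Rightarrow$(i). In that step the paper applies the uniform continuity of $x\mapsto x^+$ \emph{twice}: once to pass from $y\in W'$ to $y^+,y^-\in W$, hence $|y|=y^++y^-\in W+W\subset V$, and a second time to the pair $\left(x^+,\,x^+-|y|\right)$, whose difference is $|y|\in V$ and whose images under $(\cdot)^+$ are $x^+$ and $0$ (since $x^+\le|y|$), giving $x^+\in U'$, and likewise $x^-\in U'$, so $x=x^+-x^-\in U'+U'\subset U$ and $Sol(W')\subset U$. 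Your crux is the same underlying trick in two-variable form --- feed $(z,u)$ and $(z,0)$ into the uniformly continuous meet, so the coordinate differences are $0$ and $u$ while the values collapse to $z$ and $0$ --- but you need only \emph{one} application of uniform continuity, because you replace the paper's first application by the continuity of $x\mapsto|x|$ at zero (Theorem \ref{theorem3.2}(iii)), which is available in any topological $l$-group. So your route is somewhat more economical, at the cost of first establishing the block (ii)--(v) so that (iii) is available; the paper's route shows directly that the single one-variable condition (v) recovers local solidity. Both proofs produce the solid neighborhood as a solid hull of a small neighborhood, and both hinge on evaluating a uniformly continuous lattice map at a pair of points whose difference is small while one of the two values collapses to zero.
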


\begin{proof}
\begin{enumerate}
  \item [](i) $\Longrightarrow$ (ii). By Birkhoff's inequality (cf. Equation (27) in \cite{Birkhoff1}), we have
         \begin{equation*}
         |x\vee y-w\vee z|\leq |x- w|+|y- z|.
         \end{equation*}
         By hypothesis, we may choose a solid neighborhood $V$ of zero. If $x-w\in V$ and $y-z\in V$, then
         $|x-w|+|y- z|\in V$ 
         by Theorem \ref{theorem3.5}. It follows from the solidness of $V$ that $x\vee w-y\vee z\in V$, proving that the map $(x, y)\mapsto x\vee y$ is uniformly continuous.
  \item [](ii) $\Longrightarrow$ (iii). Since $x\wedge y=-[(-x)\vee (-y)]$ holds in a topological $l$-group,
      the conclusion follows.
  \item [](iii) $\Longrightarrow$ (iv). The conclusion follows from the identity $x^- =-(x\wedge 0)$.
  \item [](iv) $\Longrightarrow$ (v). This follows from the identity $x^+=(-x)^-$.
  \item [](v) $\Longrightarrow$ (i). Let $U$ be a neighborhood at zero. We need to find a solid neighborhood that is contained in $U$. By Theorem \ref{theorem3.5}, we can choose a symmetric neighborhood $U'$ at zero such that $U'+U'\subset U$. 
          Since the map $x\mapsto x^+$ is uniformly continuous, we can choose a symmetric neighborhood $V$ at zero such that $x-y\in V$ implies $x^+-y^+\in U'$. Next, choose a symmetric neighborhood $W$ at zero such that $W+W\subset V$; then apply the uniform continuity of the map $x\mapsto x^+$ again to choose a symmetric neighborhood $W'$ at zero such that $x-y\in W$ implies $x^+-y^+\in W$. To complete the proof, we show that the solid hull $Sol(W')$ of $W'$ is a subset of $U$. To this end, assume $|x|\leq |y|$ and $y\in W'$. By our choice of $W$, we have $y^+\in W$ and $y^- \in W$; hence $x^+ - (|y|-x^+)=|y|=y^+ + y^-\in W+W\subset V$, implying $x^+=x^+-(|y|-x^+)^+\in U'$. Similarly, we have $x^-\in U'$. Therefore, $x=x^+-x^-\in U'+U'\subset U$, proving $Sol(W')\subset U$.

\end{enumerate}
\end{proof}
\noindent \textbf{Remark 1.} By definition of a topological $l$-group, the maps $(x, y)\mapsto x\vee y$ and $(x, y)\mapsto x\wedge y$ are both continuous; however, if $(G, \tau)$ is no locally solid, then there is no guarantee that it is uniformly continuous. Example 2.18 of \cite{AB1} may be used to illustrate this point.\\

\noindent \textbf{Remark 2.} If $(G, \tau)$ is locally solid, then the map $x\mapsto |x|$, from $G\times G$ to $G$, is uniformly continuous (by (iii) and the fact $|x|=-[(-x)\wedge x]$); but the converse is not true. To see this, consider the following example.

\begin{example}\label{example4.1}
Let $G$ the group of $R^2$ under the usual pointwise addition. Equip $G$ with the usual topology $\tau_{u}$ and the lexicographic order. Then $(G, \tau_{u})$ is obviously a topological $l$-group. It is clear that the map $x\mapsto |x|$ is uniformly continuous. However, $\tau_{u}$ is not locally solid. Otherwise, any order-bounded interval would be $\tau_{u}$-bounded. But this is not the case. To see this, consider the order-bounded interval $[x, y]$, where $x=(0, 0)$ and $y=(1, 0)$. Since $[x, y]$ contains vertical infinite rays, it cannot be be the $\tau_{u}$-bounded.
\end{example}

It is well-known that a linear topology on a vector space is locally convex if and only if it is generated by a family of seminorms (cf. p. II.24 of \cite{Bourbaki}). Fremlin proved a similar result for linear topologies on Riesz spaces: a linear topology on a Riesz space is locally solid if and only if it is generated by a family of Riesz pseudonorms (cf. 22C of \cite{Fremlin}). Below we show that a group topology on an $l$-group is locally solid if and only if it is generate by a family of invariant lattice pseudometrics.

\begin{theorem}
A group topology $\tau$ on an $l$-group $G$ is locally solid if and only if it is generated by a family of translation-invariant lattice pseudometrics.
\end{theorem}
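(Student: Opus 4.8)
The plan is to prove both implications, the easy one being sufficiency. For sufficiency, suppose $\tau$ is generated by a family $\{d_i\}_{i\in I}$ of translation-invariant lattice pseudometrics. A neighbourhood base at $0$ is then given by the finite intersections of the balls $B_i(\varepsilon)=\{x\in G: d_i(0,x)<\varepsilon\}$. First I would observe that each such ball is solid: from translation-invariance and symmetry one gets $d_i(0,x)=d_i(0,-x)$, and combined with the monotonicity built into a lattice pseudometric this yields $d_i(0,x)=d_i(0,|x|)$ and $d_i(0,x)\le d_i(0,y)$ whenever $|x|\le|y|$. Hence if $y\in B_i(\varepsilon)$ and $|x|\le|y|$ then $x\in B_i(\varepsilon)$. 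Since a finite intersection of solid sets is solid, $\tau$ admits a base of solid neighbourhoods of $0$, i.e. $(G,\tau)$ is locally solid.

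The substantive direction is necessity. Assume $(G,\tau)$ is locally solid and fix a base $\mathcal{B}_0$ of solid, symmetric neighbourhoods of $0$ (symmetry is available by Theorem \ref{theorem3.5}(ii) after intersecting a solid $U$ with $-U$, which is again solid). For each decreasing sequence $(V_n)_{n\ge 0}$ with $V_0=G$, $V_n\in\mathcal{B}_0$ for $n\ge 1$, and $V_{n+1}+V_{n+1}+V_{n+1}\subseteq V_n$ (obtained by iterating Theorem \ref{theorem3.5}(i)), I would run the Birkhoff--Kakutani construction. Setting the gauge $\rho(x)=\inf\{2^{-n}:x\in V_n\}$ and $d(x,y)=\inf\{\sum_{j=1}^{k}\rho(x_{j-1}-x_j): x=x_0,\dots,x_k=y\}$ over finite chains, one checks in the standard way that $d$ is a translation-invariant pseudometric satisfying $\tfrac12\rho(x)\le d(0,x)\le\rho(x)$. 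The two-sided estimate, together with the inclusions $B_d(0,2^{-n-1})\subseteq V_n\subseteq\{x:d(0,x)\le 2^{-n}\}$, shows that the family of all such $d$ (as $(V_n)$ ranges over all admissible chains drawn from $\mathcal{B}_0$) generates exactly $\tau$.

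It remains to check that each $d$ is a lattice pseudometric, and this is the main obstacle. The gauge $\rho$ inherits monotonicity from the solidity of the $V_n$ at once: if $|x|\le|y|$ and $y\in V_n$ then $x\in V_n$, so $\rho(x)\le\rho(y)$. Transferring this to $d$ is the delicate point, since the chain-infimum definition does not obviously respect $|\cdot|$. Here I would invoke the Riesz decomposition property of $l$-groups. Given $|x|\le|y|$ and a near-optimal chain for $y$ with increments $u_j=x_j-x_{j-1}$ (so $\sum_j u_j=y$), one has $|x|\le|y|=|\sum_j u_j|\le\sum_j|u_j|$; the signed decomposition lemma then produces $v_j$ with $\sum_j v_j=x$ and $|v_j|\le|u_j|$ for every $j$. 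The decomposition is built by successive clamping, $v_1=(x\vee(-|u_1|))\wedge|u_1|$ and so on, using the identity $|x-(x\vee(-c))\wedge c|=(|x|-c)^+$ for $c\ge 0$, which follows from Lemma \ref{lemma4.1}. Monotonicity of $\rho$ gives $\rho(v_j)\le\rho(u_j)$, so the chain $0,v_1,v_1+v_2,\dots,x$ yields $d(0,x)\le\sum_j\rho(v_j)\le\sum_j\rho(u_j)$; letting the chain approach the infimum gives $d(0,x)\le d(0,y)$, as required.

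I expect the signed Riesz decomposition step to be the crux: one must ensure the increments $v_j$ reconstruct $x$ exactly while staying dominated by $|u_j|$ in modulus, which is precisely where the lattice structure (as opposed to mere topological-group structure) is genuinely used. Everything else is a careful but routine adaptation of the Birkhoff--Kakutani metrization argument, carried out uniformly over the cofinal family of admissible chains rather than a single sequence, so as to recover a possibly non-metrizable $\tau$.
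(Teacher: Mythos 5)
Your proposal is correct and follows essentially the same route as the paper: sufficiency via solidity of the pseudometric balls, and necessity via the Birkhoff--Kakutani chain construction over solid symmetric neighborhoods satisfying $V_{n+1}+V_{n+1}+V_{n+1}\subseteq V_n$, with the lattice property transferred from the gauge $\rho$ to the chain-infimum pseudometric $d$ by the dominated (Riesz) decomposition property of $l$-groups. The only differences are cosmetic: where you sketch the decomposition lemma by successive clamping, the paper simply cites it (Fuchs, p.~69), and the paper indexes its generating family by one pseudometric per neighborhood $V\in\mathcal{N}_0$ rather than by all admissible chains.
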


\begin{proof}
Suppose $\{d_{\alpha}\}_{\alpha\in A}$ is a family of translation-invariant lattice pseudometrics. Let $d$ be an arbitrary pseudometric in this family. For every $r>0$, put
\begin{equation*}
B_d(0, r)=\{x\in G\mid d(0, x)<r\}.
\end{equation*}
Then the translation-invariant of $d$ implies $B_d(0, r)$ is symmetric, i.e., $B_d(0, r)=-B_d(0, r)$; the subadditivity of $d$ implies $B_d(0, \frac{r}{2})+B_d(0, \frac{r}{2})\subset B_d(0, r)$. Next, assume $|x|\leq |y|$ in $G$ and $y\in B_d(0, r)$. Since $d$ is a lattice pseudometric, we have $d(0, x)\leq d(0, y)<r$, showing that $B_d(0, x)$ is solid subset of $G$. Thus, for any finitely many $d_1, ..., d_n$ in $\{d_{\alpha}\}_{\alpha\in A}$, the collection of all sets of the form
\begin{equation*}
B_{d_1}(0, r)\cap...\cap B_{d_n}(0, r), \quad r>0,
\end{equation*}
is a neighborhood base at zero for some locally solid group topology on $G$. It follows that the family $\{d_{\alpha}\}_{\alpha\in A}$ generates a locally solid group topology on $G$.

Conversely, suppose $\tau$ is a translation-invariant locally solid group topology on an $l$-group $G$, we need to show that $\tau$ is generated by a family of translation-invariant lattice pseudometrics. To this end, let $V$ be a neighborhood at zero. Choose a sequence $\{U_n\}$ of locally solid symmetric $\tau$-neighborhoods of zero such that
\begin{eqnarray*}
& & U_1 = V; \\
& & U_{n+1}+U_{n+1}+U_{n+1}\subset U_n, \quad \forall n\geq 1.
\end{eqnarray*}
Define a function $\rho: G\times G\rightarrow R_+$ as follows:
\begin{equation}\label{4.1}
\rho(x, y)=\left\{
             \begin{array}{ll}
               1, & \hbox{if $x-y\not \in U_1$;} \\
               2^{-n}, & \hbox{if $x-y\in U_{n+1}\backslash U_n$;} \\
               0, & \hbox{if $x-y\in \cap_{n=1}^{\infty} U_n$.}
             \end{array}
           \right.
\end{equation}
Then $\rho$ has the following three properties.
\begin{enumerate}
  \item [(i)]$\rho$ is translation-invariant, although it is not a pseudometric.
  \item [(ii)]$x-y\in U_n$ if and only if $\rho(x, y)\leq 2^{-n}$ for $x, y\in G$.
  \item [(iii)]$\rho(0, x)\leq \rho(0, y)$ whenever $|x|\leq |y|$ and $x, y\in G$. 
\end{enumerate}
Next, we define a function $d: G\times G\rightarrow R_+$ via the formula
\begin{equation}\label{4.2}
d(x, y)=\inf\left\{\sum_{i=1}^{n-1}\rho(x_i, x_{i+1})\ \bigg |\  \normalsize x_1=x, x_n=y, x_i\in G \text{ for $i=2, ..., n-1$}\right\}.
\end{equation}
We claim that $d$ is a translation-invariant pseudometric on $G$. Indeed, it is evident that $d(x, y)\geq 0$ and $d(x, y)=d(y, x)$. It is also easy to see from Equation (\ref{4.1}) and Equation (\ref{4.2}) that $d(x, y)\leq d(x, z)+d(z, y)$ for all $x, y, z\in G$. Since $\rho$ is translation-invariant, Equation (\ref{4.2}) shows that $d$ is translation-invariant too. Finally, suppose $x, y\in G$ and $y=\sum_{i=1}^n y_i$, where $y_1, ..., y_n\in G$. Then the dominated decomposition property of $l$-groups (cf. p. 69 of \cite{Fuchs1}) implies the existence of $x_1, ..., x_n\in G$ such that $x=\sum_{i=1}^n x_i$ and $|x_i|\leq |y_i|$ for $i=1, .., n$. It follows from property (iii) of $\rho$ that
\begin{equation*}
d(0, x)\leq \sum_{i=1}^{n-1} \rho(0, x_0)\leq \sum_{i=1}^{n-1}\rho(0, x_i),
\end{equation*}
implying $d(0, x)\leq d(0, y)$. Therefore, $d$ is a translation-invariant lattice pseudometric on $G$.

The above discussion shows that for each neighborhood $V$ of zero, there exists a translation-invariant pseudometric $d_V$ on $G$ such that
\begin{equation}\label{4.3}
x\in V \text{ if and only if $d_V(0, x)\leq 1$}.
\end{equation}
Let $\tau'$ be the group topology generated by $\{d_V\}_{V\in\mathcal{N}_0}$. Then Equation (\ref{4.3}) implies that $\tau\subset \tau'$. To finish the proof, we need to show $\tau'\subset \tau$. To this end, it suffices to show that for any positive integer $n$ we have
\begin{equation}\label{4.4}
B(0,2^{-n})=\{x\in G\mid d(0, x)<2^{-n}\}\subset U_n.
\end{equation}
It is easy to see that Equation (\ref{4.4}) is implied by $\rho\leq 2d$ which is further implied by
\begin{equation}\label{4.5}
\frac{1}{2}\rho(x, y)\leq \sum_{i=1}^{n-1} \rho(x_i, x_{i+1}),
\end{equation}
where $x_1=x, x_n=y$ and $x_2, ..., x_{n-1}\in G$. If $\sum_{i=1}^{n-1}\rho(x_i, x_{i+1})=0$, then Equation (\ref{4.1}) and Theorem \ref{theorem3.5} imply that
$\rho(x, y)=0$; hence Equation (\ref{4.5}) holds. For the remainder of the proof, we assume that $\sum_{i=1}^{n-1}\rho(x_i, x_{i+1})\neq 0$. We establish Equation (\ref{4.5}) by induction on $n$. The case $n=1$ is trivial. For the inductive step, we assume Equation (\ref{4.5}) holds for all positive integers that are less than $n$. Consider two cases.

Case I: $\sum_{i=1}^{n-1} \rho(x_i, x_{i+1})<\frac{1}{2}$. If $\sum_{i=1}^{n-1} \rho(x_i, x_{i+1})=0$, then we clearly have $x_i-x_{i+1}\in U_n$ for all $n\in N$; hence $x-y\in \cap_{n=1}^\infty U_n$ implying $\rho(x, y)=0$. 
Next, we assume $\sum_{i=1}^{n-1} \rho(x_i, x_{i+1})>0$. Put
\begin{equation*}
m=\max_{1\leq j\leq n}\left\{j\ \bigg | \  \frac{1}{2}\sum_{i=1}^n \rho(x_i, x_{i+1})\geq \sum_{i=1}^j\rho(x_i, x_{i+1}) \right\}.
\end{equation*}
Then $\frac{1}{2}\sum_{i=1}^{n-1} \rho(x_i, x_{i+1})<\sum_{i=1}^{m+1} \rho(x_i, x_{i+1})$ which leads to
$\sum_{i=m+1}^{n-1} \rho(x_i, x_{i+1})<\frac{1}{2}\sum_{i=1}^{n-1} \rho(x_i, x_{i+1})$. By the induction hypothesis, $ \frac{1}{2} \rho(x, x_m)\leq \sum_{i=1}^{m-1} \rho(x_i, x_{i+1})$; hence $\rho(x, x_m)\leq \sum_{i=1}^{n-1} \rho(x_i, x_{i+1})$.
Likewise, we have
\begin{equation*}
f(x_{m+1}, y)\leq \sum_{i=1}^{n-1} \rho(x_i, x_{i+1}).
\end{equation*}
Put
\begin{equation*}
j=\min_{ k\geq 1}\left\{ k\ \bigg |\ 2^{k-1}\leq \sum_{i=1}^{n-1} \rho(x_i, x_{i+1}) \right\}.
\end{equation*}
Then $\rho(x, x_m)<2^{j-1}$, implying $x-x_m\in U_{j-1}$. Similarly, we have $x_m-x_{m+1}\in U_{j-1}$ and
$x_{m+1}-y\in U_{j-1}$. By the choice of $\{U_n\}$, we have $x-y\in U_j$. Therefore, property (ii)  of $\rho$ implies that $\frac{1}{2}\rho(x, y)\leq 2^{-\j}\leq \sum_{i=1}^{n-1} \rho(x_i, x_{i+1})$, that is, (\ref{4.5}) holds.

Case II: $\sum_{i=1}^{n-1} \rho(x_i, x_{i+1})\geq \frac{1}{2}$. In this case, (\ref{4.5}) holds trivially in view of (\ref{4.1}).
\end{proof}

Theorem \ref{theorem3.3} shows that the set $A=\cap_{U\in \mathcal{N}_0} U$ in a topological $l$-group $(G, \tau)$ plays an important role in characterizing the separation property of $\tau$. From Theorem \ref{theorem3.5} we see that $A$ is always $\tau$-closed.
When $\tau$ is locally solid, we can say more.

\begin{theorem}
If $(G, \tau)$ is a locally solid topological $l$-group and $\mathcal{N}_0$ is the $\tau$-neighborhood system at zero, then the set $A=\cap_{U\in\mathcal{N}_0} U$ is a $\tau$-closed ideal of $G$.
\end{theorem}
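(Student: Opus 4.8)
The plan is to prove the three constituent assertions separately: that $A=\cap_{U\in\mathcal{N}_0}U$ is (a) a subgroup, (b) $\tau$-closed, and (c) solid. Properties (a) and (b) hold for an arbitrary abelian topological group, so I would dispose of them quickly with the machinery already recorded in Theorems \ref{theorem3.1} and \ref{theorem3.5}, and then devote the real effort to (c), which is the only place where local solidity is genuinely used (and indeed the only reason the conclusion is an \emph{ideal} rather than merely a closed subgroup).

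For the subgroup property I would argue directly. Clearly $0\in A$. Symmetry $A=-A$ follows because $x\mapsto -x$ is a homeomorphism, so $U\in\mathcal{N}_0$ implies $-U\in\mathcal{N}_0$; thus $x\in A$ forces $-x\in U$ for every $U\in\mathcal{N}_0$, i.e. $-x\in A$. Closure under addition follows from Theorem \ref{theorem3.5}(i): given $x,y\in A$ and $U\in\mathcal{N}_0$, choose $V\in\mathcal{N}_0$ with $V+V\subset U$; then $x,y\in V$ gives $x+y\in V+V\subset U$, and since $U$ was arbitrary, $x+y\in A$. For closedness I would note that $A$ coincides with $\overline{\{0\}}$ (a point $x$ lies in $\overline{\{0\}}$ iff $0\in x+U$ for all $U\in\mathcal{N}_0$, iff $-x\in A$, iff $x\in A$ by symmetry), so being a closure it is automatically $\tau$-closed; this recovers the closedness already asserted in the text, and can equally be cited from Theorem \ref{theorem3.5}.

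The substantive step is solidness. Here I would first make the small but essential reduction that $A$ may be computed over a \emph{solid} base: local solidity provides a neighborhood base $\mathcal{B}_0\subset\mathcal{N}_0$ at zero consisting of solid sets, and since every $V\in\mathcal{N}_0$ contains some $U\in\mathcal{B}_0$, one checks both inclusions to get $A=\cap_{U\in\mathcal{N}_0}U=\cap_{U\in\mathcal{B}_0}U$. Granting this, solidness is immediate: if $y\in A$ and $|x|\leq|y|$, then for each $U\in\mathcal{B}_0$ we have $y\in U$, and solidness of $U$ forces $x\in U$; as $U\in\mathcal{B}_0$ was arbitrary, $x\in\cap_{U\in\mathcal{B}_0}U=A$. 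Combining the three parts, $A$ is a $\tau$-closed solid subgroup, i.e. a $\tau$-closed ideal.

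I do not expect any part to present a serious obstacle, since each reduces to a standard topological-group lemma or to the defining property of a solid set. The one point requiring genuine care is the passage from $\mathcal{N}_0$ to the solid base $\mathcal{B}_0$ in the definition of $A$: solidness of the individual neighborhoods is what distinguishes this theorem from the general topological-group situation, and without first justifying that $A=\cap_{U\in\mathcal{B}_0}U$ one cannot transfer solidness of the base members to $A$ itself. Once that reduction is in place, the remainder is routine bookkeeping.
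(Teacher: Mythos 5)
Your proposal is correct and takes essentially the same route as the paper's proof: the subgroup property via a symmetric neighborhood $V$ with $V+V\subset U$, and solidness by observing that the intersection $A=\cap_{U\in\mathcal{N}_0}U$ can equally be taken over a solid neighborhood base, so that solidness of the base members transfers to $A$. The only cosmetic difference is that you establish closedness directly by identifying $A=\overline{\{0\}}$, whereas the paper disposes of that point by citing Theorem \ref{theorem3.5} in the remark preceding the theorem.
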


\begin{proof}
Let $U$ be an arbitrary $\tau$-neighborhood at zero. Since $\tau$ is locally solid, $U$ contains a $\tau$-closed solid $\tau$-neighborhood of zero. It follows that $A$ is a solid subset of $G$. Next, take $x, y\in A$ and
choose a $\tau$-neighborhood symmetric $V$ of zero such that $V+V\subset U$.
Then $x-y\in V+V\subset U$, implying $x-y\in A$. Since $A$ is evidently nonempty, this shows that $A$ is subgroup of $G$. Therefore, $A$ is a $\tau$-closed ideal of $G$.
\end{proof}

\begin{theorem}
Suppose $(G, \tau)$ is a locally solid topological $l$-group and $G$ is an order dense subset of an $l$-group $H$. If $\tau$ extends to a locally solid lattice group topology $\tau_H$ on $H$, then $(G, \tau_H)$ is a Hausdorff locally solid topological $l$-group.
\end{theorem}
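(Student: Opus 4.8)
The plan is to verify the Hausdorff property through the intersection criterion of Theorem~\ref{theorem3.3}: it suffices to prove that the intersection $\bigcap_{V} V$ of all $\tau_H$-neighborhoods $V$ of zero in $H$ equals $\{0\}$. (I read the conclusion as asserting that $(H,\tau_H)$ is Hausdorff, since $\tau_H$ is a topology on $H$; I also note that the argument will require $(G,\tau)$ itself to be Hausdorff, which I take to be the standing hypothesis of this section on completions.) Accordingly I would fix an arbitrary $x\in H\setminus\{0\}$ and manufacture a single $\tau_H$-neighborhood of zero that omits $x$.

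The first step is to replace $x$ by its modulus and descend into $G$. Since $x\neq 0$, Lemma~\ref{lemma4.1}(vii) gives $|x|=x^{+}+x^{-}\neq 0$, and as $|x|\ge 0$ we have $0<|x|$ in $H$. Because $G$ is order dense in $H$, there is an element $y\in G$ with $0<y\le |x|$. Now $y$ is a nonzero element of $G$ and, by hypothesis, $(G,\tau)$ is Hausdorff, so Theorem~\ref{theorem3.3} supplies a $\tau$-neighborhood $U$ of zero in $G$ with $y\notin U$. Since $\tau$ is the restriction of $\tau_H$ to $G$, the set $U$ contains the trace $O\cap G$ of some $\tau_H$-open set $O\ni 0$; and since $\tau_H$ is locally solid, I may choose a \emph{solid} $\tau_H$-neighborhood $W$ of zero with $W\subseteq O$. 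Then $W\cap G\subseteq U$, and because $y\in G$ with $y\notin U$, we conclude $y\notin W$.

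The last step transfers this separation of $y$ back up to $x$ using solidness. Suppose, for contradiction, that $x\in W$. Applying the solidness of $W$ with $a=|x|$ and $b=x$ (so that $|a|=\bigl|\,|x|\,\bigr|=|x|=|b|$) gives $|x|\in W$; then $0\le y\le |x|$, that is $|y|\le \bigl|\,|x|\,\bigr|$, forces $y\in W$ by solidness again, contradicting $y\notin W$. Hence $x\notin W$, and since $x$ was an arbitrary nonzero point of $H$, the intersection of all $\tau_H$-neighborhoods of zero reduces to $\{0\}$, so $(H,\tau_H)$ is Hausdorff. I expect the crux of the proof to be precisely this bridge from $G$ to $H$: order density is what produces a positive witness $y\in G$ lying beneath $|x|$, while the solidness of $\tau_H$ is what allows the separation achieved inside the Hausdorff space $(G,\tau)$ to be lifted to a separation of $x$ in $H$.
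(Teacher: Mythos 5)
Your proof is correct and takes essentially the same route as the paper's: order density produces $y\in G$ with $0<y\le |x|$, Hausdorffness of $(G,\tau)$ yields a $\tau$-neighborhood $U$ of zero excluding $y$, and a solid $\tau_H$-neighborhood whose trace on $G$ lies in $U$ transfers the separation to $x$ via solidness, invoking Theorem~\ref{theorem3.3}. The only cosmetic difference is that you make the reduction to positive elements rigorous by passing to $|x|$ with an extra application of solidness, where the paper simply declares ``without loss of generality $x>0$''; you also correctly flag, as the paper's proof implicitly assumes, that Hausdorffness of $(G,\tau)$ must be read as a hypothesis.
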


\begin{proof}
Take any $x\in H$. Without loss of generality, we may assume $x>0$. Since $G$ is order dense in $H$, we can choose a $y\in G$ such that $0<y\leq x$.
As $\tau$ is a Hausdorff group topology, we can pick a $\tau$-neighborhood $U$ of zero such that $y\not \in U$.
Next, choose a solid $\tau_H$-neighborhood $V$ of zero such that $G\cap V \subset U$.
In view of Theorem \ref{theorem3.3}, it remains to show $x\not \in V$. We proceed by contraposition. If $x\in V$, then $y\in V$ by the solidness of $U$; hence $y\in G\cap V \subset U$, contradicting our choice of $U$. Therefore, $x\not\in V$.
\end{proof}

\cite{Goffman}, \cite{Jaffard} and \cite{Pierce} gave some properties of lattice homomorphisms between $l$-groups. The next two theorems extend two characterization theorems of lattice homomorphisms between Riesz spaces (cf. Theorem 2.14 and Theorem 2.21 of \cite{AB2}) to the case of $l$-groups.

\begin{theorem}\label{theorem4.2}
Let $T$ be a group homomorphism between two $l$-groups $G_1$ and $G_2$. The the following statements are equivalent.
\begin{enumerate}
  \item [(i)]$T$ is a lattice homomorphism.
  \item [(ii)]$T(x^+)=(T(x))^+$ for all $x\in G_1$.
  \item [(iii)]$T(x\wedge y)=T(x)\wedge T(y)$ for all $x, y\in G_1$.
  \item [(iv)]$T(x)\wedge T(y)=0$ whenever $x\wedge y=0$ in $G_1$.
  \item [(v)]$T(|x|)=|T(x)|$ for all $x\in G_1$.
\end{enumerate}
\end{theorem}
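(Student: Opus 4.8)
The plan is to prove the theorem by establishing the single cycle of implications $(i)\Rightarrow(ii)\Rightarrow(iii)\Rightarrow(iv)\Rightarrow(v)\Rightarrow(i)$. Throughout I would lean on three kinds of facts: the lattice identities collected in Lemma \ref{lemma4.1}, the elementary consequences of $T$ being a group homomorphism (namely $T(0)=0$ and $T(-x)=-T(x)$), and --- crucially --- the fact that every $l$-group is torsion-free, i.e.\ $na=nb$ forces $a=b$ for $n\geq 1$ (cf.\ \cite{Fuchs1}). The first few implications are mild rewritings via Lemma \ref{lemma4.1}; the last one is where the $l$-group setting genuinely departs from the Riesz space argument.

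For $(i)\Rightarrow(ii)$ I would simply compute $T(x^+)=T(x\vee 0)=T(x)\vee T(0)=T(x)\vee 0=(T(x))^+$. For $(ii)\Rightarrow(iii)$ I would use Lemma \ref{lemma4.1}(iv): since $x\wedge y=x-(x-y)^+$, applying $T$ and then $(ii)$ gives $T(x\wedge y)=T(x)-(T(x)-T(y))^+=T(x)\wedge T(y)$, the last equality again by Lemma \ref{lemma4.1}(iv). The implication $(iii)\Rightarrow(iv)$ is immediate: if $x\wedge y=0$ then $T(x)\wedge T(y)=T(x\wedge y)=T(0)=0$.

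The implication $(iv)\Rightarrow(v)$ is the first substantive step. First I would extract positivity of $T$ from $(iv)$: for $x\geq 0$ we have $x\wedge 0=0$, so $(iv)$ yields $T(x)\wedge 0=0$, i.e.\ $T(x)\geq 0$. Next, since $x^+\wedge x^-=0$ is a standard $l$-group identity (cf.\ \cite{Fuchs1}), $(iv)$ gives $T(x^+)\wedge T(x^-)=0$, with both $T(x^+),T(x^-)$ positive. I would then invoke the sublemma that for positive disjoint elements $a,b$ (i.e.\ $a\wedge b=0$) one has $(a-b)^+=a$; this follows because $(a-b)^+ + b = a\vee b$ by Lemma \ref{lemma4.1}(iii), while $a\vee b=a+b$ by Lemma \ref{lemma4.1}(v). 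Applying this to $a=T(x^+)$, $b=T(x^-)$, whose difference is $T(x)$, gives $(T(x))^+=T(x^+)$ and dually $(T(x))^-=T(x^-)$; hence $T(|x|)=T(x^+)+T(x^-)=(T(x))^+ + (T(x))^- =|T(x)|$ by Lemma \ref{lemma4.1}(vii).

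The final implication $(v)\Rightarrow(i)$ is the crux and the main obstacle. In a Riesz space one recovers $x\vee y$ from the absolute value via $x\vee y=\tfrac12\big((x+y)+|x-y|\big)$, but division by $2$ is unavailable in a general $l$-group. Instead I would establish the \emph{integer} identity $2(x\vee y)=(x+y)+|x-y|$ directly from Lemma \ref{lemma4.1}, combining $x\vee y + x\wedge y=x+y$ (identity (v)) with the auxiliary identity $x\vee y-x\wedge y=|x-y|$, apply $T$ to obtain $2\,T(x\vee y)=T(x)+T(y)+|T(x)-T(y)|=2\big(T(x)\vee T(y)\big)$, and then cancel the factor $2$ using torsion-freeness to conclude $T(x\vee y)=T(x)\vee T(y)$. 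The delicate point is precisely this replacement of halving by a torsion-free cancellation argument, so I would take care to verify $x\vee y-x\wedge y=|x-y|$ within the $l$-group setting (it follows from identities (iii), (iv), (vi) of Lemma \ref{lemma4.1}) rather than importing it from Riesz space theory.
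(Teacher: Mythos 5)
Your proposal is correct and follows essentially the same route as the paper: the identical implication cycle (i)$\Rightarrow$(ii)$\Rightarrow$(iii)$\Rightarrow$(iv)$\Rightarrow$(v)$\Rightarrow$(i), with the same crux at the end, namely the integer identity $2(x\vee y)=(x+y)+|x-y|$ followed by cancellation of the factor $2$, which is legitimate because commutative $l$-groups are torsion-free (the paper cites the cancellation law via Baer/Birkhoff where you cite torsion-freeness via Fuchs). The only micro-difference is in (iv)$\Rightarrow$(v), where you use the disjointness fact $(a-b)^+=a$ for positive disjoint $a,b$ rather than the paper's identity $|a-b|=a\vee b-a\wedge b$; your variant is actually slightly cleaner, since the paper's write-up of that step appeals to ``the fact that $T$ is a lattice homomorphism'' (circular as literally stated, though easily repaired), whereas your disjointness argument needs only hypothesis (iv).
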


\begin{proof}
\begin{enumerate}
  \item [] (i) $\Longrightarrow$ (ii). Let $T$ is a lattice homomorphism and $x\in G_1$. Then
            \begin{equation*}
            T(x^+)=T(x\vee 0)=T(x)\vee T(0)=T(x)\vee 0=(T(x))^+.
            \end{equation*}
  \item [] (ii) $\Longrightarrow$ (iii). Take two points $x, y\in G_1$. In view of Lemma \ref{lemma4.1} (iv), statement (ii) implies
            \begin{eqnarray*}
            T(x\wedge y) &=& T(x-(x-y)^+) \\
                         &=& T(x)-T((x-y)^+)\\
                         &=& T(x)-(T(x-y))^+\\
                         &=& T(x)-(T(x)-T(y))^+=T(x)\wedge T(y).
            \end{eqnarray*}
  \item [](iii) $\Longrightarrow$ (iv). If $x\wedge y=0$ in $G_1$, then (iii) implies
            \begin{equation*}
            T(x)\wedge T(y) =T(x\wedge y)=T(0)=0.
            \end{equation*}
  \item [](iv) $\Longrightarrow$ (v).
  Let $x\in G_1$. Then Lemma \ref{lemma4.1} (v) shows
            \begin{equation*}
            |T(x^+)-T(x^-)|=T(x^+)\vee T(x^-)-T(x^+)\wedge T(x^-).
            \end{equation*}
            Since $x^+\wedge x^-=0$, (iv) and the fact that $T$ is a lattice homomorphism imply
            \begin{eqnarray*}
            |T(x)| &=& |T(x^+)-T(x^-)|\\
                    &=& T(x^+)\vee T(x^-) =T(x^+ \vee x^-)\\
                    &=& T(x^++x^-)=T(|x|).
            \end{eqnarray*}
  \item [](v) $\Longrightarrow$ (i). Take two elements $x, y\in G_1$. Apply Lemma \ref{lemma4.1} to get
            \begin{eqnarray*}
            x+y+|x-y| &=& (x+y)+(x-y)\vee [-(x-y)]\\
                       &=& (2x)\vee (2y)\\
                       &=& 2 (x\vee y). 
            \end{eqnarray*}
            Therefore, (v) implies
            \begin{eqnarray*}
            2T(x\vee y)=T(2 (x\vee y)) &=& T(x+y+|x-y|) \\
                            &=& T(x)+T(y)+T(|x-y|)\\ 
                            &=& T(x)+T(y)-|T(x)-T(y)|\\
                            &=& 2 [T(x)\vee T(y)].
            \end{eqnarray*}
            Since an element in an $l$-group has an infinite order (Alternatively, recall that we assume that all $l$-groups are commutative; hence the cancellation law holds.),
            it follows that $T(x\vee y)=T(x)\vee T(y)$, that is, $T$ is a lattice homomorphism.
\end{enumerate}
\end{proof}

\begin{theorem}\label{theorem4.3}
Let $T$ be a lattice homomorphism between two $l$-groups $G_1$ and $G_2$. Then the following statements hold.
\begin{enumerate}
  \item [(i)]$T$ is positive.
  \item [(ii)]$T(G_1)$ is a topological $l$-group.
  \item [(iii)]If $T$ is order-continuous, then $T$ preserves all suprema and infima of a nonempty subset in $G_1$.
  \item [(iv)]If $T$ is onto, then $T$ maps solid sets in $G_1$ to solid sets in $G_2$.
  \item [(v)]If $T$ is bijective, then $T$ and $T^{-1}$ are both positive.
  \item [(vi)] The kernel $Ker(T)$ of $T$ is an ideal of $G_1$.
  \item [(vii)]If $T$ is onto, then $T$ is $\sigma$-order-continuous if and only if $Ker(T)$ is a $\sigma$-ideal of $G_1$.
  \item [(viii)]If $T$ is onto, then $T$ is order-continuous if and only if $Ker(T)$ is a band of $G_1$.
\end{enumerate}
\end{theorem}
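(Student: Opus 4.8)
The plan is to dispatch the eight assertions in increasing order of difficulty, using throughout the equivalent descriptions of a lattice homomorphism in Theorem \ref{theorem4.2}---especially $T(x^+)=(T(x))^+$, $T(x\wedge y)=T(x)\wedge T(y)$, and $T(|x|)=|T(x)|$---together with the identities in Lemma \ref{lemma4.1}. Parts (i), (ii), (v), (vi) are short. For (i), $x\geq 0$ gives $x=x\vee 0$, so $T(x)=T(x)\vee 0\geq 0$. For (ii), $T(G_1)$ is closed under $+,-,\vee,\wedge$ (e.g. $T(x)\vee T(y)=T(x\vee y)\in T(G_1)$), hence is an $l$-subgroup of $G_2$; with the subspace topology the restricted operations stay continuous, so $T(G_1)$ is a topological $l$-group. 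For (v), a bijective lattice homomorphism has a lattice-homomorphism inverse (apply $T$ to $T^{-1}(a)\vee T^{-1}(b)$ and use injectivity), so both maps are positive by (i). For (vi), $Ker(T)$ is a subgroup, and $|x|\leq|y|$ with $T(y)=0$ forces $0\leq|T(x)|=T(|x|)\leq T(|y|)=0$, so $Ker(T)$ is solid, i.e. an ideal.

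For (iv), given a solid $E$ and $w\in G_2$ with $|w|\leq|T(x)|$, $x\in E$, I would use surjectivity to write $w=T(z)$ and then truncate: set $u=(z\vee(-|x|))\wedge|x|$, so that $-|x|\leq u\leq|x|$, whence $u\in E$; applying the lattice-homomorphism property and $|w|\leq|T(x)|$ gives $T(u)=(w\vee(-|T(x)|))\wedge|T(x)|=w$, so $w\in T(E)$. For (iii), if $\sup S=a$ exists I would direct the finite subsets $F\subseteq S$ by inclusion; then $\sup F\uparrow a$ gives $\sup F\xrightarrow{o}a$, order-continuity gives $T(\sup F)\xrightarrow{o}T(a)$ with $T(\sup F)$ increasing, hence $T(\sup F)\uparrow T(a)$, and since $T(\sup F)=\sup T(F)$ one concludes $\sup T(S)=T(a)$, the infimum case being dual.

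The core of the theorem is (vii) and (viii), which I would prove in parallel, the sole difference being ``sequence/$\sigma$-ideal'' versus ``net/band''. First I would record that order limits are unique in any $l$-group: if $|x_\alpha-x|\leq u_\alpha\downarrow 0$ and $|x_\alpha-y|\leq v_\alpha\downarrow 0$, then $|x-y|\leq u_\alpha+v_\alpha$, and using $v_\beta+\inf_{\alpha\geq\beta}u_\alpha=v_\beta$ (translation invariance of infima from Lemma \ref{lemma4.1}) one gets $|x-y|\leq\inf_\beta v_\beta=0$. Uniqueness makes the forward implications immediate: if $T$ is ($\sigma$-)order-continuous and $(x_\alpha)\subseteq Ker(T)$ order-converges to $x$, then $T(x_\alpha)=0\xrightarrow{o}T(x)$ forces $T(x)=0$, so $Ker(T)$ is ($\sigma$-)order-closed and thus a $\sigma$-ideal (resp. band) by (vi).

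The converse implications will be the main obstacle. Starting from $x_\alpha\xrightarrow{o}x$ with $|x_\alpha-x|\leq y_\alpha\downarrow 0$, I would reduce to proving $T(y_\alpha)\downarrow 0$, since $|T(x_\alpha)-T(x)|=T(|x_\alpha-x|)\leq T(y_\alpha)$ and $T(y_\alpha)$ is decreasing and positive. So I must show every lower bound $0\leq w\leq T(y_\alpha)$ vanishes. Surjectivity lets me write $w=T(v)$ and, replacing $v$ by $v^+$, take $v\geq 0$; the key computation is $T(v\wedge y_\alpha)=w\wedge T(y_\alpha)=w=T(v)$, so $v-v\wedge y_\alpha\in Ker(T)$, while $v\wedge y_\alpha\downarrow 0$ (immediate from the definition of infimum) yields $v-v\wedge y_\alpha\uparrow v$, i.e. $v-v\wedge y_\alpha\xrightarrow{o}v$. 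Then ($\sigma$-)order-closedness of $Ker(T)$ forces $v\in Ker(T)$, so $w=T(v)=0$, as needed. The delicate points are normalizing $w$ to a positive preimage via onto-ness and checking that the approximating net $v-v\wedge y_\alpha$ lies in $Ker(T)$ and order-converges to $v$, which is exactly where the $\sigma$-ideal/band hypothesis is consumed.
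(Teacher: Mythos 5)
Your proposal is correct, and on the theorem's core---parts (vii) and (viii)---it is essentially the paper's own argument: both proofs take a hypothetical positive lower bound of the decreasing image net, use surjectivity together with $T(v^+)=(T(v))^+$ to produce a \emph{positive} preimage, exhibit a net in $Ker(T)$ increasing to that preimage, and let ($\sigma$-)order-closedness force a contradiction. Indeed your net $v-v\wedge y_\alpha$ equals $(v-y_\alpha)^+$, which is literally the net $(x_0-x_n)^+$ appearing in the paper; the only cosmetic difference is that the paper first reduces to the case $x_n\downarrow 0$, while you work directly with the dominating net $y_\alpha$.

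Two of the smaller parts genuinely diverge, both in your favor. In (iv) the paper truncates one-sidedly: from $|T(x)|\leq|T(y)|$ it gets $T(x)=T(x\wedge|y|)$ and asserts $x\wedge|y|\in E$ ``by solidness''---but solidness requires $\bigl|x\wedge|y|\bigr|\leq|y|$, which can fail when $x$ is very negative, so the paper's proof as written has a gap. Your two-sided truncation $u=(z\vee(-|x|))\wedge|x|$ satisfies $-|x|\leq u\leq|x|$, hence $|u|\leq|x|$ and $u\in E$, and $T(u)=w$ follows exactly as you compute; this closes the gap. In (iii) the paper simply says the claim is evident, whereas your directed family of finite subsets, combined with the fact that an increasing order-convergent net converges to its supremum, is an actual proof. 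One caution: in the forward direction of (vii)/(viii) and in (iii) you use order-continuity in the strong sense $x_\alpha\xrightarrow{o}x$ implies $T(x_\alpha)\xrightarrow{o}T(x)$, while the paper's stated definition only requires $(T(x_\alpha))$ to be order-convergent to \emph{something}; the paper's own proof tacitly makes the same strengthening, so this is a defect of the definition rather than of your argument.
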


\begin{proof}
\begin{enumerate}
  \item [(i)]Theorem \ref{theorem4.2} shows that  $T(x)\geq 0$ for $x\in (G_1)_+$; hence $T$ is positive.
  \item [(ii)]This follows immediately from the definition of lattice homomorphisms.
  \item [(iii)]This is evident.
  \item [(iv)]Let $E$ be a solid subset of $G_1$. Suppose $ |w|\leq |z|, z\in T(E)$ and $w\in G_2$. Then there exist $x\in G_1$ and $y\in E$ such that $w=T(x)$ and $z=T(y)$.
              Since $|T(x)|\leq |T(y)|$, Theorem \ref{theorem4.2} implies
              \begin{equation*}
              T(x)=T(x)\wedge |T(y)|=T(x)\wedge T(|y|)=T(x\wedge |y|).
              \end{equation*}
              By the solidness of $E$, we have $x\wedge |y|\in E$. 
              It follows that $T(x)\in T(E)$, showing that $T(E)$ is a solid subset of $G_2$.
  \item [(v)]If $T$ is bijective, then $T^{-1}$ is clearly a lattice homomorphism from $G_2$ to $G_1$.
             By (i), $T$ and $T^{-1}$ are both positive. Conversely, suppose $T$ and $T^{-1}$ are both positive. Since $x^+\geq 0$ and $x^+\geq x$ for any $x\in G_1$, we have $T(x^+)\geq 0$ and $T(x^+)\geq x$; hence $T(x^+)\geq (T(x))^+$. Apply this inequality to the map $T^{-1}$ and the element $T(x)\in G_2$ to obtain
             \begin{equation*}
             T^{-1}([T(x)]^+)\geq (T^{-1}(T(x)))^+=x^+,
             \end{equation*}
             which implies $(T(x))^+= T(x^+)$. It follows from Theorem \ref{theorem4.2} that $T$ is a lattice homomorphism
  \item [(vi)]Since $T$ is a group homomorphism, $Ker(T)$ is a subgroup of $G_1$. Next, we show $Ker(T)$
                is solid. To this end, assume $|x|\leq |y|, x\in G$ and $y\in Ker(T)$. By Theorem \ref{theorem4.2}, we have
                \begin{equation*}
                |T(x)|=T(|x|)=T(|x|\wedge |y|)=T(|x|)\wedge T(|y|)=T(|x|)\wedge 0=0,
                \end{equation*}
                implying $x\in Ker(T)$. Thus, $Ker(T)$ is a solid in $G_1$.
  \item [(vii)]If $T$ is $\sigma$-order-continuous, then (vi) implies that $Ker(T)$ is a $\sigma$-ideal
                of $G_1$. Conversely, assume $Ker(T)$ is a $\sigma$-ideal of $G_1$ and a sequence $x_n\downarrow 0$ in $G_1$. Since $T$ is positive by (i), it is easy to see that we only need to show $T(x_n)\downarrow 0$ in $G_2$.
                Clearly, the positivity of $T$ implies $T(x_n)\downarrow$; so it remains to show $\inf_{n}\{T(x_n)\}=0$.
               Suppose not. Then there exists $y\in G_2$ such that $0< y\leq T(x_n)$ for all $n\in N$.
                By Theorem \ref{theorem4.2}, we know there exists $x_0\in (G_1)_+$ such that $T(x_0)=y$. We have
                \begin{equation*}
                T\left((x_0-x_n)^+\right)=T\left((x_0)-(x_n)\right)^+=\left(y-T(x_n)\right)^+=0.
                \end{equation*}
                Thus, $x_0-x_n\in Ker(T)$ for all $n$. Since $0\leq (x_0-x_n)^+\uparrow x_0$, the order-closedness of $Ker(T)$ implies $x_0\in Ker(T)$, i.e., $T(x_0)=y=0$, contradicting $y>0$. Therefore, we must have $\inf_n\{T(x_n)\}=0$.
  \item [(viii)]Similar to (viii).
\end{enumerate}
\end{proof}

Let $G$ be an $l$-group and $H$ be a subgroup of $G$. Since $G$ is assumed to be commutative, $H$ is always is normal subgroup of $G$; hence the quotient group $G/H$ is well-defined. Following \cite{Fuchs1}, we order the quotient group $G/H$ as follows:
\begin{equation}\label{4.6}
\overline{x}\leq \overline{y} \text{ if and only if $a\leq b$,}
\end{equation}
where $a$ and $b$ are some representatives of $\overline{x}$ and $\overline{y}$, respectively. Then $G/H$ becomes a p.o. group. In the case where $H$ is an ideal, we can say more.

\begin{theorem}\label{theorem4.4}
If $A$ is an ideal of an $l$-group $G$, then the following three statements hold.
\begin{enumerate}
  \item [(i)]The the positive cone $(G/A)_+=\{\overline{x}\mid x\in G_+\}$ of $G/A$ satisfies the following three properties:
               \begin{enumerate}
                 \item [(1)]$(G/A)_+ +(G/A)_+\subset (G/A)_+$;
                 \item [(2)]$ n (G/A)_+\subset (G/A)_+$ for all positive integer $n$;
                 \item [(3)]$(G/A)_+\cap (-(G/A)_+)=\{0\}$.
               \end{enumerate}
  \item [(ii)]$G/A$ is an $l$-group.
  \item [(iii)]The natural projection $\pi: A\rightarrow G/A$ is an onto lattice homomorphism.
\end{enumerate}
\end{theorem}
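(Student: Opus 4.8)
The plan is to dispatch the three parts in order, with part (ii) drawing on a computation made for part (i) together with the lattice identities of Lemma \ref{lemma4.1} and Birkhoff's inequality.

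For part (i), properties (1) and (2) are immediate: since $\overline{x}+\overline{y}=\overline{x+y}$ and $n\overline{x}=\overline{nx}$, and $G_+$ is a cone, both sets stay inside $(G/A)_+$. The substance is (3). Given $\overline{z}\in (G/A)_+\cap(-(G/A)_+)$, I would write $\overline{z}=\overline{x}=-\overline{y}$ with $x,y\in G_+$, so that $x+y\in A$. Because $0\le x\le x+y$ with both members positive, $|x|=x\le x+y=|x+y|$, and the solidness of the ideal $A$ forces $x\in A$, i.e. $\overline{z}=0$. This is exactly the place where $A$ being an ideal, not merely a subgroup, is used; combined with commutativity (which makes condition (iii) of a cone automatic), this shows $(G/A)_+$ is a cone, so the induced order, which coincides with \eqref{4.6}, makes $G/A$ a p.o. group.

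For part (ii) I would first record two consequences of solidness that I reuse: $A$ contains $|a|$ whenever it contains $a$, and $A$ is a sublattice of $G$ (if $a,b\in A$ then $|a\vee b|\le|a|+|b|\in A$ by Birkhoff's inequality, so $a\vee b\in A$). Set $\overline{x}\vee\overline{y}:=\overline{x\vee y}$. Well-definedness is again Birkhoff's inequality: $|x\vee y-x'\vee y'|\le|x-x'|+|y-y'|\in A$ forces $x\vee y-x'\vee y'\in A$. That $\overline{x\vee y}$ is an upper bound of $\overline{x},\overline{y}$ is clear since $\pi$ is order preserving. The crux is the least-upper-bound property, which I expect to be the main obstacle. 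I would prove the characterization $\overline{z}\ge 0\iff z^-\in A$ (if $z^-\in A$ then $z=z^++(-z^-)$ exhibits a positive representative; conversely if $z=u+a$ with $u\ge0$, $a\in A$, then $0\le z^-\le|a|\in A$). Then for any upper bound $\overline{w}$ of $\overline{x},\overline{y}$, Lemma \ref{lemma4.1} gives $w-(x\vee y)=(w-x)\wedge(w-y)$, and the identity $(s\wedge t)^-=s^-\vee t^-$ (a consequence of Lemma \ref{lemma4.1}(viii)) yields $(w-(x\vee y))^-=(w-x)^-\vee(w-y)^-$. By the characterization both joinands lie in $A$, and since $A$ is a sublattice so does their join; hence $\overline{w}\ge\overline{x\vee y}$. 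Thus $\overline{x}\vee\overline{y}=\overline{x\vee y}$ exists, and dualizing via $x\wedge y=-((-x)\vee(-y))$ gives infima, so $G/A$ is a lattice and therefore an $l$-group.

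For part (iii), $\pi$ is onto by construction and a group homomorphism by definition of the quotient; the computation $\pi(x\vee y)=\overline{x\vee y}=\overline{x}\vee\overline{y}=\pi(x)\vee\pi(y)$ established in part (ii) says precisely that $\pi$ is a lattice homomorphism (alternatively, invoke Theorem \ref{theorem4.2}). The only genuine work is the least-upper-bound verification in part (ii); once the characterization $\overline{z}\ge0\iff z^-\in A$ and the sublattice property of $A$ are in place, the rest is bookkeeping.
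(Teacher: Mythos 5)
Your proof is correct, and your parts (i) and (iii) match the paper's in substance, but your treatment of part (ii) takes a genuinely different route. The paper never constructs general binary joins in $G/A$: it invokes the criterion (Theorem 8 of \cite{Birkhoff1}) that a p.o.\ group is a lattice as soon as $(\overline{x})^+$ exists for every $\overline{x}$, and verifies $\sup\{\overline{0},\overline{x}\}=\overline{x^+}$ by direct representative bookkeeping --- given an upper bound $\overline{y}$ of $\{\overline{0},\overline{x}\}$, it picks representatives $a\le b$ with $b\ge 0$, writes $x=a+(x-a)\le b+(x-a)^+$, notes $0\le b+(x-a)^+$, and concludes $x^+\le b+(x-a)^+$, hence $\overline{x^+}\le\overline{y}$ because $(x-a)^+\in A$. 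You instead prove the full identity $\overline{x}\vee\overline{y}=\overline{x\vee y}$, using Birkhoff's inequality for well-definedness, the positivity criterion $\overline{z}\ge 0\iff z^-\in A$, the identity $(s\wedge t)^-=s^-\vee t^-$, and closure of $A$ under joins. The paper's argument is shorter and needs fewer auxiliary facts; yours buys three things. First, (iii) falls out immediately from $\pi(x\vee y)=\pi(x)\vee\pi(y)$, whereas the paper must route through Theorem \ref{theorem4.2} via $\pi(x^+)=(\pi(x))^+$. Second, the criterion $\overline{z}\ge 0\iff z^-\in A$ is an explicit, reusable description of the quotient order that the paper leaves implicit (it is also exactly what shows the order of Equation (\ref{4.6}) induced by the cone is well behaved). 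Third, your well-definedness step sidesteps a point the paper glosses over: its ``without loss of generality $b\ge 0$'' needs a small justification (replace $b$ by $b\vee b'$, where $b'\ge 0$ is a positive representative of $\overline{y}$, and note $b\vee b'-b'=(b-b')^+\in A$ by solidness), which your argument never requires.
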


\begin{proof}
\begin{enumerate}
  \item [(i)]Properties (1) and (2) are trivial. To see property (3), take $\overline{x}$ in $(G/A)_+\cap (-(G/A)_+)$. Then there exist positive elements $a$ and $b$ in $G$ such that $\overline{a}=\overline{(-b)}=\overline{x}$. Thus, $\overline{a+b}=\overline{0}$, implying $a+b\in A$.
              Since $0\leq x\leq a+b$ and $A$ is solid, we have $x\in A$. It follows that $\overline{x}=\overline{0}$; hence property (3) holds.
  \item [(ii)]We already know that if we order $G/A$ according to Equation (\ref{4.6}), then $G/A$ becomes
                a partially ordered group. So it suffices to show that $G/A$ is a lattice. Indeed, it suffices to show that $(\overline{x})^+$ exists in $G/A$ for each $\overline{x}\in G/A$ (cf. Theorem 8 of \cite{Birkhoff1} or p. 67 of \cite{Fuchs1}). Since $x\leq x^+$ and $0\leq x^+$ in $G$, Equation (\ref{4.1}) shows that $\overline{x}\leq \overline{x^+}$ and $0\leq \overline{x^+}$, that is, $\overline{x^+}$ is another upper bound of the set $\{\overline{0}, \overline{x}\}$. Next, suppose $\overline{y}$ is an upper bound of $\{\overline{0}, \overline{x}\}$, i.e, $\overline{y}\geq \overline{0}$ and $\overline{y}\geq \overline{x}$ in $G/A$. Take representatives $a$ and $b$ from $\overline{x}$ and $\overline{y}$, respectively.
                Then $a\leq b$. Without loss of generality, we may also assume $b\geq 0$. It follows that
                \begin{eqnarray*}
                x &=& a+(x-a)\leq b+(x-a)^+.
                \end{eqnarray*}
                Also, $0\leq b+(x-a)^+$. Thus, $x^+\leq b+(x-a)^+$,                 implying $\overline{x^+}\leq \overline{b}=\overline{y}$ in $G/A$. Therefore, $\overline{x^+}=\sup\{\overline{0}, \overline{x}\}=(\overline{x})^+$, proving that $(\overline{x})^+$ exists in $G/A$.
  \item [(iii)]By definition of the natural projection, $\pi$ is surjective. In the proof (ii), we have     obtained $\pi(x^+)=(\pi(x))^+$ for all $x\in G$. Therefore, Theorem \ref{theorem4.2} implies that $\pi$ is also a lattice homomorphism.
\end{enumerate}
\end{proof}
\noindent \textbf{Remark.} Properties (1) and (2) in statement (i) shows that $(G/A)_+$ is indeed a cone in the quotient group $G/A$.

\begin{corollary}\label{corollary4.1}
Suppose $(G_1, \tau_1)$ is a locally solid topological $l$-group, $G_2$ is an $l$-group, and $T$ is a lattice homomorphism from $G_1$ to $G_2$, then $(G_2, \tau_{T})$ is a locally solid topological $l$-group, where $\tau_{T}$ is the quotient topology on $G_2$ inducted by $T$. In particular, if $A$ is an ideal of a topological $l$-group $G$, then $(G/A, \tau_{\pi})$ is a locally solid topological $l$-group, where $\pi$ is the natural projection from $G$ to $G/A$.
\end{corollary}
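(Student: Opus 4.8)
The plan is to push the structure from $G_1$ to $G_2$ through the quotient map using a single key fact, namely that $T$ is an \emph{open} map, and then to read off local solidity directly from Theorem \ref{theorem4.3}(iv). Since the phrase ``quotient topology induced by $T$'' presupposes that $T$ is onto, I take $T$ surjective; the ``in particular'' clause is then the instance $G_1=G$, $G_2=G/A$, $T=\pi$, which is legitimate because Theorem \ref{theorem4.4}(iii) guarantees that $\pi$ is an onto lattice homomorphism. That $\tau_T$ is a group topology is the standard statement that a quotient of a topological group by a subgroup is again a topological group. The one point that genuinely needs an argument is openness of $T$: for any $\tau_1$-open $U\subseteq G_1$ the homomorphism property gives $T^{-1}(T(U))=U+\ker T=\bigcup_{k\in\ker T}(U+k)$, a union of translates of an open set and hence $\tau_1$-open, so $T(U)$ is $\tau_T$-open by definition of the quotient topology.

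Granting openness, I would obtain the topological $l$-group structure uniformly. Because $T$ is open and surjective, $T\times T\colon G_1\times G_1\to G_2\times G_2$ is open and surjective, hence a quotient map. For each binary operation $\star\in\{-,\vee,\wedge\}$ the corresponding (lattice) homomorphism identity, $T(x-y)=T(x)-T(y)$, $T(x\vee y)=T(x)\vee T(y)$, and $T(x\wedge y)=T(x)\wedge T(y)$ from Theorem \ref{theorem4.2}(iii), is exactly the assertion $\star_{G_2}\circ(T\times T)=T\circ\star_{G_1}$. The right-hand side is continuous since $T$ and $\star_{G_1}$ are, and because $T\times T$ is a quotient map this forces $\star_{G_2}$ to be $\tau_T$-continuous. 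Thus subtraction, join, and meet are all continuous, so $(G_2,\tau_T)$ is a topological $l$-group in the sense of the second (equivalent) definition in the excerpt.

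For local solidity I would fix a neighborhood base $\mathcal{B}_0$ at zero in $(G_1,\tau_1)$ consisting of solid sets, available by hypothesis. Openness of $T$ makes each $T(U)$ a $\tau_T$-neighborhood of $0$, and $\{T(U):U\in\mathcal{B}_0\}$ is a base there: given any $\tau_T$-neighborhood $W$ of $0$, its preimage $T^{-1}(W)$ is a $\tau_1$-neighborhood of $0$, so some $U\in\mathcal{B}_0$ lies inside it, whence $T(U)\subseteq W$. By Theorem \ref{theorem4.3}(iv) each solid $U$ is carried to a solid set, so this base consists of solid sets and $(G_2,\tau_T)$ is locally solid, as claimed. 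The main obstacle is precisely the openness of $T$: products of quotient maps need not be quotient maps, so without openness neither the transport of the lattice operations nor the ``images form a base'' step would be available, and openness is exactly what repairs both.
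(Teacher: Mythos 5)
Your proposal is correct and follows essentially the same route as the paper: both establish that $\tau_T$ is a group topology making $(G_2,\tau_T)$ a topological $l$-group, push a solid neighborhood base at zero forward to $\{T(U)\mid U\in\mathcal{B}_0\}$, invoke Theorem \ref{theorem4.3}(iv) to see that these images are solid, and obtain the quotient statement from Theorem \ref{theorem4.4}(iii). The only difference is level of detail: where the paper cites \cite{Husain} and simply asserts that the images form a neighborhood base, you explicitly prove that $T$ is open and use the quotient-map property of $T\times T$ to transport continuity of subtraction, join, and meet, which is precisely the justification the paper leaves implicit.
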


\begin{proof}
Since $T$ is a group homomorphism from $G_1$ to $G_2$, the quotient topology $\tau_{T}$ is a group topology, making $(G_2, \tau_{T})$ into a topological $l$-group (cf. p. 59 of \cite{Husain}). Moreover, if we let $\mathcal{B}_0$ be a neighborhood base at zero consisting of solid sets, then $\{T(U)\mid U\in \mathcal{B}_0\}$ is a neighborhood base at zero for $\tau_{T}$. It follows from Theorem \ref{theorem4.3} that $\tau_T$ is locally solid, that is, $(G_2, \tau_{T})$ is a locally solid topological $l$-group.

If $A$ is an ideal of a topological $l$-group $G$, then Theorem \ref{theorem4.4} shows that the natural projection $\pi: G\rightarrow G/A$ is an onto lattice homomorphism. Therefore, the second statement follows immediately from the first statement.
\end{proof}

Next, we investigate order-bounded sets in a topological $l$-group. Recall that a subset $E$ of a topological group $(G, \tau)$ is said to be \emph{$\tau$-bounded} if for every $\tau$-neighborhood $U$ of zero there exists a positive integer $n$ such that $E\subset n U$. It is known that an order-bounded subset of a locally solid Riesz space is topologically bounded (cf. Theorem 2.19 of \cite{AB1}). However, this result does not extend to locally solid topological $l$-groups. Consider the following example.

\begin{example}\label{example4.2}
Let $G$ be the additive group of reals equipped with the usual order and discrete topology $\tau$. Then $(G, \tau)$ is evidently a locally solid topological $l$-group.
Since a Riesz space is connected, $G$ is not a Riesz space. Let $U=B(0, 1)$ be the open ball centered at $0$ with radius $1$. Then $U$ is a neighborhood of zero and $U=\{0\}$.
Choose $E=[-2014, 2014]$. Then $E$ is clearly an order-bounded subset of $G$. However, for all positive integer $n$ we have $E\not \subset nU$; hence $E$ is not $\tau$-bounded.
\end{example}
\noindent \textbf{Remark.} Indeed, the fact that an order-bounded subset of a locally solid Riesz space $(L, \tau)$ is $\tau$-bounded depends on the fact that each neighborhood of zero is absorbing which in turn depends on the continuity of the scalar multiplication. Since a topological $l$-group lacks this property, an order-bounded set in a locally solid topological $l$-group is not expected to be topologically bounded. \\

The next theorem give a condition under which a $\tau$-bounded subset of a topological $l$-group will be order-bounded.
\begin{theorem}\label{theorem4.5}
Let $(G, \tau)$ be a topological $l$-group. If $G$ has an order-bounded $\tau$-neighborhood of zero, then every $\tau$-bounded subset is order-bounded.
\end{theorem}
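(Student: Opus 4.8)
The plan is to combine the hypothesized order-bounded neighborhood of zero with the defining property of $\tau$-boundedness. Fix a $\tau$-neighborhood $U$ of zero that is order-bounded, say $U\subseteq [a,b]$ for suitable $a,b\in G$. Given an arbitrary $\tau$-bounded subset $E$, the goal is simply to exhibit an ordered interval containing $E$.

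The first step is to invoke $\tau$-boundedness for this particular neighborhood $U$: by definition there is a positive integer $n$ with $E\subseteq nU$, where $nU$ denotes the $n$-fold sum $U+\cdots+U$. The remaining step is to trap $nU$ inside a single ordered interval. This rests only on the order-compatibility of the group law (property (iii) in the definition of a p.o.\ group): if $x_1,\dots,x_n\in U$, then $a\le x_i\le b$ for each $i$, and adding these order-preserving inequalities termwise gives $na\le x_1+\cdots+x_n\le nb$. Hence $nU\subseteq [na,nb]$, so $E\subseteq [na,nb]$ and $E$ is order-bounded.

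I expect no genuine obstacle; the argument is essentially formal and uses neither the lattice operations nor local solidity, only the existence of the distinguished order-bounded neighborhood. The single point deserving care is the inclusion $nU\subseteq [na,nb]$, which is exactly where the compatibility of the group operation with the order enters; the same estimate also goes through verbatim if one instead reads $nU$ as $\{nx\mid x\in U\}$, so the precise convention adopted in the definition of $\tau$-boundedness does not affect the conclusion.
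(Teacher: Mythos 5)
Your proof is correct and follows essentially the same route as the paper's: take the order-bounded neighborhood $U\subseteq[a,b]$, use $\tau$-boundedness to get $E\subseteq nU$, and use the translation-invariance of the order to conclude $nU\subseteq[na,nb]$. Your explicit handling of both readings of $nU$ (iterated sum versus $\{nx\mid x\in U\}$) is a small but worthwhile addition of rigor, since the paper leaves this convention, and the inclusion $nU\subseteq[na,nb]$ itself, implicit.
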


\begin{proof}
Let $\mathcal{B}_0$ be a $\tau$-neighborhood base of zero. By hypothesis, there exists $U\in \mathcal{B}_0$ such that $U$ is contained in some order interval $[x, y]$ of $G$, where $x, y\in G$. Suppose $E$ is a $\tau$-bounded subset of $G$. Then there exists a positive integer $n$ such that $E\subset n U$. It follows from the hypothesis that $E$ is contained in the order interval $[nx, ny]$ of $G$, showing that $E$ is order-bounded.
\end{proof}

The next result shows that order-bounded sets in a topologically group have some desirable properties.
\begin{theorem}\label{theorem4.6}
Suppose $(G, \tau)$ is a topological $l$-group. Then the following statements hold.
\begin{enumerate}
  \item [(i)]An arbitrary intersection of ordered bounded sets is order-bounded.
  \item [(ii)]A finite union of ordered bounded sets is order-bounded.
  \item [(iii)]The algebraic sum of two ordered bounded sets is order-bounded.
  \item [(iv)]A nonzero multiple of an order-bounded set is order-bounded.
   \item [(v)]If $A$ is an ideal in $L$ and $\pi :L\rightarrow L/A$ is the natural projection, then
              $\pi$ maps an order-bounded set to an order-bounded set, i.e., $\pi$ is an order-bounded homomorphism.
\end{enumerate}
\end{theorem}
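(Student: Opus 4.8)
The plan is to prove all five statements purely from the order and group structure of $G$, since order-boundedness is an order-theoretic notion that does not involve the topology $\tau$; the topological hypotheses therefore play no role here. Throughout I will write an order interval as $[a,b]=\{x\mid a\leq x\leq b\}$ and repeatedly use that $G$ is a lattice, that the order is compatible with addition, and that $x\mapsto -x$ reverses the order.

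For (i), if $\{E_{\lambda}\}_{\lambda\in\Lambda}$ is a nonempty family of order-bounded sets, then fixing any $\lambda_0$ with $E_{\lambda_0}\subseteq[a,b]$ gives $\bigcap_{\lambda}E_{\lambda}\subseteq E_{\lambda_0}\subseteq[a,b]$, so the intersection is order-bounded. For (ii), given $E_i\subseteq[a_i,b_i]$ for $i=1,\dots,n$, I would set $a=a_1\wedge\cdots\wedge a_n$ and $b=b_1\vee\cdots\vee b_n$, which exist because $G$ is a lattice, and check that every $x\in\bigcup_{i=1}^n E_i$ satisfies $a\leq a_i\leq x\leq b_i\leq b$, so the finite union lies in $[a,b]$. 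For (iii), if $E_1\subseteq[a_1,b_1]$ and $E_2\subseteq[a_2,b_2]$, then for $x\in E_1$ and $y\in E_2$ the compatibility of $\leq$ with addition (property (iii) in the definition of a p.o. group) yields $a_1+a_2\leq x+y\leq b_1+b_2$, so $E_1+E_2\subseteq[a_1+a_2,b_1+b_2]$.

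For (iv), let $E\subseteq[a,b]$ and let $n$ be a nonzero integer. If $n>0$, adding the inequality $a\leq x\leq b$ to itself $n$ times gives $na\leq nx\leq nb$, so $nE\subseteq[na,nb]$. If $n<0$, write $n=-m$ with $m>0$; since $x\mapsto -x$ reverses the order in any p.o. group (the fact underlying Lemma \ref{lemma4.1}(viii)), $a\leq x\leq b$ gives $-b\leq -x\leq -a$, and multiplying by $m$ yields $nb\leq nx\leq na$, so $nE\subseteq[nb,na]$. For (v), Theorem \ref{theorem4.4} shows that $L/A$ is an $l$-group and that the natural projection $\pi$ is an onto lattice homomorphism, whence Theorem \ref{theorem4.3}(i) gives that $\pi$ is positive and therefore order-preserving; consequently $E\subseteq[a,b]$ forces $\pi(E)\subseteq[\pi(a),\pi(b)]$, so $\pi$ maps order-bounded sets to order-bounded sets.

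The content of the theorem is precisely that none of these closure properties depends on $\tau$, so I expect no serious obstacle. The only points that require a little care are the negative-multiple case of (iv), where the order-reversing property of negation must be invoked rather than assumed, and the implicit nonemptiness of the family in (i); statement (v) is the one place where the earlier structural results (Theorems \ref{theorem4.4} and \ref{theorem4.3}) are genuinely needed.
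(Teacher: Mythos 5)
Your proof is correct and follows essentially the same route as the paper: the paper dismisses (i)--(iv) as trivial (you simply write out the routine order-theoretic verifications) and proves (v) exactly as you do, via Theorem \ref{theorem4.4}(iii) and positivity of the lattice homomorphism $\pi$, with your explicit check that $\pi(E)\subseteq[\pi(a),\pi(b)]$ being just the unwound form of the paper's appeal to the fact that positive homomorphisms are order-bounded.
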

\begin{proof}
(i)-(iv) are trivial. We show (v). Since $A$ is an ideal of $L$, Theorem \ref{theorem4.4} (iii) shows that the natural projection $\pi: L\rightarrow L/A$ is a lattice homomorphism. Thus, $T$ is a positive homomorphism. Then the conclusion follows from the fact that every positive homomorphism between two $l$-group is order-bounded.
\end{proof}

The next theorem gives more properties of locally solid topological $l$-groups.
\begin{theorem}\label{theorem4.7}
Suppose $(G, \tau)$ is a locally solid topological $l$-group. Then the following two statements hold.
\begin{enumerate}
  \item [(i)]The $\tau$-closure of an $l$-subgroup of $G$ is an $l$-group.
  \item [(ii)]The $\tau$-closure of a solid subset of $G$ is solid.
  \item [(iii)]The $\tau$-closure of an ideal in $G$ is an ideal.
\end{enumerate}
\end{theorem}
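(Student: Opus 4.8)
The plan is to handle the three parts in the order (i), (ii), (iii), since (iii) will drop out by combining the ingredients of the first two. Throughout I would argue with nets, using that a point lies in $\overline{E}$ precisely when it is a limit of a net drawn from $E$, together with the continuity of the group and lattice operations that is built into the definition of a topological $l$-group and recorded in Theorem \ref{theorem3.2}.

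For (i): let $H$ be an $l$-subgroup. By Theorem \ref{theorem3.1}(iii) its closure $\overline{H}$ is already a subgroup of $G$, so only the sublattice property remains. Given $x,y\in\overline{H}$, I would pick nets $(x_\alpha)$ and $(y_\beta)$ in $H$ with $x_\alpha\to x$ and $y_\beta\to y$; passing to the product directed set, each $x_\alpha\vee y_\beta$ lies in $H$, and the continuity of $(s,t)\mapsto s\vee t$ forces $x_\alpha\vee y_\beta\to x\vee y$, so $x\vee y\in\overline{H}$. The identical argument with $\wedge$ gives $x\wedge y\in\overline{H}$ (equivalently one may invoke $f(\overline{A})\subseteq\overline{f(A)}$ for the continuous maps $\vee,\wedge\colon G\times G\to G$). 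Hence $\overline{H}$ is an $l$-subgroup.

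For (ii), the heart of the matter, let $E$ be solid, fix $v\in\overline{E}$ and $u\in G$ with $|u|\le|v|$, and choose a net $(v_\alpha)$ in $E$ with $v_\alpha\to v$. The idea is to clamp $u$ into the interval determined by each $v_\alpha$ by setting
\[
u_\alpha=\bigl(u\vee(-|v_\alpha|)\bigr)\wedge|v_\alpha|.
\]
Since $-|v_\alpha|\le u_\alpha\le|v_\alpha|$ one gets $|u_\alpha|\le|v_\alpha|$, and solidness of $E$ yields $u_\alpha\in E$. To finish I would use the continuity of $x\mapsto|x|$ (Theorem \ref{theorem3.2}(i)) to obtain $|v_\alpha|\to|v|$, and observe that the map $\varphi(t)=(u\vee(-t))\wedge t$ is continuous with $\varphi(|v|)=u$: the hypothesis $|u|\le|v|$ is exactly the statement $-|v|\le u\le|v|$, so the clamp leaves $u$ fixed at the parameter $t=|v|$. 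Continuity of $\varphi$ then gives $u_\alpha=\varphi(|v_\alpha|)\to\varphi(|v|)=u$, so $u\in\overline{E}$ and $\overline{E}$ is solid.

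For (iii): an ideal $I$ is by definition a solid subgroup, so Theorem \ref{theorem3.1}(iii) makes $\overline{I}$ a subgroup while part (ii) makes $\overline{I}$ solid; hence $\overline{I}$ is a solid subgroup, that is, an ideal. The main obstacle is the convergence step in (ii), where the approximating elements must simultaneously lie in $E$ (forcing the clamp to $[-|v_\alpha|,|v_\alpha|]$) and converge to $u$; the clean resolution is the observation that $\varphi$ is continuous and that $|u|\le|v|$ is precisely what makes $\varphi$ fix $u$ at the limiting parameter, which avoids any delicate lattice-inequality estimate of $|u-u_\alpha|$. I would note in passing that, although local solidness is the standing hypothesis of this section, the continuity of the lattice operations is in fact all that these three closure statements require.
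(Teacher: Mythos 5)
Your proof is correct, and at the level of strategy it matches the paper's: nets, continuity of the lattice operations, and a truncation of the approximating net into the solid set. The differences in execution are worth recording, though, because your version of (ii) is more robust than the paper's. The paper truncates one-sidedly, with a case split on the sign of $x$: it sets $z_\alpha=x\wedge|y_\alpha|$ if $x\ge 0$ and $z_\alpha=(-x)\vee(-|y_\alpha|)$ if $x<0$. Since the order of an $l$-group is only partial, this split misses the elements incomparable with $0$ (and, as written, the second formula converges to $-x$ rather than $x$, evidently a slip for $x\vee(-|y_\alpha|)$); your two-sided clamp $u_\alpha=\bigl(u\vee(-|v_\alpha|)\bigr)\wedge|v_\alpha|$ needs no case analysis at all, and the identity $\varphi(|v|)=u$, which encodes exactly the hypothesis $|u|\le|v|$, finishes the limit computation cleanly, so your argument in fact repairs a gap in the printed proof. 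In (i) the difference is cosmetic: the paper tracks only the map $x\mapsto x^+$ and invokes Birkhoff's criterion that a p.o.\ group in which $x^+$ always exists is a lattice, whereas you verify closure under $\vee$ and $\wedge$ directly; both work. Finally, your closing remark is correct and sharper than the paper's own proof, which cites Theorem \ref{theorem4.1} (uniform continuity of the lattice maps, the one place local solidness would enter) at the step $z_\alpha\xrightarrow{\tau}x$, where plain continuity of $\wedge$ and of $x\mapsto|x|$ --- available in every topological $l$-group --- already suffices; so all three closure statements indeed hold without the locally solid hypothesis.
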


\begin{proof}
\begin{enumerate}
  \item [(i)]Let $H$ be an $l$-subgroup of $G$. By Theorem \ref{theorem3.1} (iii), the closure $\overline{H}$ of $H$ is subgroup of $G$. Let $x_0\in \overline{H}$. Then there exists a net $(x_{\alpha})$ in $H$ such that $x_{\alpha}\xrightarrow{\tau}x_0$. Since $H$ is an $l$-subgroup of $G$, the net $(x^+_{\alpha})$ belongs to $H$. By the continuity of the map $x\mapsto x^+$, we have $x_{\alpha}^+\xrightarrow{\tau}x^+_0$, implying $x^+_0\in H$.
      Therefore, $\overline{H}$ is an $l$-group in view of Theorem 8 of \cite{Birkhoff1}.
  \item [(ii)]Let $E$ be a solid subset of $G$. Suppose $|x|\leq |y|$ in $G$ and $y\in \overline{E}$. Then there exists a net $(y_{\alpha})$ in $G$ such that $y_{\alpha}\xrightarrow{\tau}y$. Define a two-sided truncated net $(z_{\alpha})$ as follows:
                \begin{equation*}
                z_{\alpha}=\left\{
                             \begin{array}{ll}
                               x\wedge |y_{\alpha}|, & \hbox{if $x\geq 0$;} \\
                               (-x)\vee (-|y_{\alpha}|), & \hbox{if $x<0$.}
                             \end{array}
                           \right.
                \end{equation*}
                Then the solidness of $E$ implies that the net $(z_{\alpha})$ belong to $H$. In addition, Theorem \ref{theorem4.1} shows $z_{\alpha}\xrightarrow{\tau}x$; hence $x\in \overline{E}$. This proves that $\overline{E}$ is a solid subset of $G$.
  \item [(iii)]This follows from (i) and (ii).
\end{enumerate}
\end{proof}

We close this section by giving some properties of Hausdorff locally solid topological $l$-groups.
\begin{theorem}\label{theorem4.8}
Suppose $(G, \tau)$ is a Hausdorff locally solid topological $l$-group. Then the following statement hold.
\begin{enumerate}
  \item [(i)]The positive cone $G_+$ is $\tau$-closed.
  \item [(ii)]Let $(x_{\alpha})_{\alpha\in A}$ be a net in $G$. If $x_{\alpha}\xrightarrow{\tau} x$ and $x_{\alpha}\downarrow $ in $G$, then $x_{\alpha}\downarrow x$. Likewise, if $x_{\alpha}\xrightarrow{\tau} x$ and $x_{\alpha}\uparrow $ in $G$, then $x_{\alpha}\uparrow x$.
  \item [(iii)]Let $(x_{\alpha})_{\alpha\in A}$ and $(y_{\alpha})_{\alpha\in A}$ are two nets in $G$. If $x_{\alpha}\leq y_{\alpha}\downarrow$ and $x_{\alpha}-x_{\alpha}\xrightarrow{\tau} 0$, then $x_{\alpha}\downarrow x$ if and only if $y_{\alpha}\downarrow x$.
\item [(iv)] If $\{x_{\alpha}\}$ is an increasing net in $G$ with a cluster point $x_0$, then
            $x_{\alpha}\uparrow x_0$.
\item [(v)]If $E$ is a subset of $G$ and $x\in \overline{E}$, then
            $x=\sup\{x\wedge y\mid y\in G\}=\inf\{x\vee y\mid y\in G\}$.
\end{enumerate}
\end{theorem}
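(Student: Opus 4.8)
The plan is to reduce all five parts to a single structural fact, namely part (i): once the positive cone $G_+$ is known to be $\tau$-closed, each of (ii)--(v) becomes a routine ``the relevant difference lies in a closed cone, now pass to the limit'' argument. So I would prove (i) first and invoke it repeatedly. For (i) itself, I would observe that $x\geq 0$ is equivalent to $x^-=0$, so that $G_+=f^{-1}(\{0\})$ with $f(x)=x^-$. Since $(G,\tau)$ is locally solid, Theorem \ref{theorem4.1} gives that $f$ is (uniformly) continuous, and since $(G,\tau)$ is Hausdorff, $\{0\}$ is $\tau$-closed by Theorem \ref{theorem3.3}. Hence $G_+$ is the continuous preimage of a closed set, and therefore $\tau$-closed.

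For (ii), given $x_{\alpha}\downarrow$ and $x_{\alpha}\xrightarrow{\tau}x$, I would show $x$ is the infimum. For any fixed $\beta$ and all $\alpha\geq\beta$ we have $x_{\beta}-x_{\alpha}\in G_+$; letting $\alpha$ run and using that $G_+$ is closed yields $x_{\beta}-x\in G_+$, so $x$ is a lower bound. If $z$ is any lower bound then $x_{\alpha}-z\in G_+$ for all $\alpha$, and passing to the $\tau$-limit gives $x-z\in G_+$, so $x$ is the greatest lower bound. The increasing case follows by applying this to $(-x_{\alpha})$, using continuity of negation.

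Parts (iii) and (v) appear to contain typographical slips: in (iii) the hypothesis ``$x_{\alpha}-x_{\alpha}\xrightarrow{\tau}0$'' should read $y_{\alpha}-x_{\alpha}\xrightarrow{\tau}0$, and in (v) the index ``$y\in G$'' should be $y\in E$ (as literally written, (v) is trivially true and makes no use of $E$). Reading them this way, for (iii) I would fix $\beta$ and note, for $\alpha\geq\beta$ and any lower bound $w$ of the relevant net, that $(w-x_{\beta})^+\leq y_{\alpha}-x_{\alpha}$; since the right side $\tau$-converges to $0$ and $G_+$ is closed, $(w-x_{\beta})^+=0$, i.e. $w\leq x_{\beta}$, and this forces the two infima to coincide at $x$. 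For (v) I would take a net $y_{\alpha}\in E$ with $y_{\alpha}\xrightarrow{\tau}x$; continuity of meet gives $x\wedge y_{\alpha}\xrightarrow{\tau}x\wedge x=x$, and since each $x\wedge y_{\alpha}\leq x$, the same closed-cone limiting argument shows $x=\sup\{x\wedge y\mid y\in E\}$; the dual computation with continuity of join gives $x=\inf\{x\vee y\mid y\in E\}$.

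The step I expect to be the main obstacle is (iv), where only a cluster point (not a genuine limit) is available. The key observation is that a cluster point $x_0$ of the increasing net lies in $\overline{\{x_{\alpha}\mid \alpha\geq\beta\}}$ for every $\beta$; since this tail is contained in the closed set $x_{\beta}+G_+$, we obtain $x_{\beta}\leq x_0$, so $x_0$ is an upper bound. If $z$ is any upper bound then $\{x_{\alpha}\}\subset z-G_+$, which is again closed, and $x_0$ lies in its closure, giving $x_0\leq z$; hence $x_{\alpha}\uparrow x_0$. Thus the whole theorem rests on part (i) together with careful bookkeeping of which translated copy of the closed cone $G_+$ each relevant net inhabits.
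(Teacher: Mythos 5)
Your proposal is correct and follows essentially the same route as the paper: part (i) is proved identically (writing $G_+=\{x\in G\mid x^-=0\}$ and combining continuity of $x\mapsto x^-$ with the $\tau$-closedness of $\{0\}$ in a Hausdorff group), and parts (ii), (iii), (v) then pass order inequalities to $\tau$-limits through the closed cone exactly as the paper does; your corrections of the typos in (iii) and (v) agree with what the paper's own proof actually uses. The one genuine divergence is (iv): the paper extracts an increasing subnet converging to $x_0$ and applies (ii) together with (i), whereas you observe that the cluster point $x_0$ lies in the closure of each tail $\{x_\alpha\mid \alpha\geq\beta\}\subset x_\beta+G_+$, and that $\{x_\alpha\}\subset z-G_+$ for any upper bound $z$; this avoids subnets altogether and is, if anything, cleaner, since both translated copies of $G_+$ are closed by (i). One caveat on (iii): your inequality $(w-x_\beta)^+\leq y_\alpha-x_\alpha$ needs $(x_\alpha)$ to be decreasing, which in the direction ``$y_\alpha\downarrow x$ implies $x_\alpha\downarrow x$'' is not among the stated hypotheses; but the paper's own proof (its assertion that $x-x\wedge x_\alpha$ increases) rests on exactly the same unstated assumption, so this is a defect of the theorem's formulation rather than a gap peculiar to your argument.
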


\begin{proof}
\begin{enumerate}
  \item [(i)]Theorem \ref{theorem3.3} shows that $\{0\}$ is $\tau$-closed. Since the positive cone $G_+$ can be written as $G_+=\{x\mid x^-=0\}$, the conclusion follows the continuity of the map $x\mapsto x^-$.
  \item [(ii)]Fix an index $\alpha_0$. Since the net $(x_{\alpha})_{\alpha\in A}$ is decreasing, for any $\alpha\geq \alpha_0$ we have
             \begin{equation*}
             0\leq x-x_{\alpha_0}\wedge x\leq x-x_{\alpha}\wedge x\leq |x-x_{\alpha}|.
             \end{equation*}
             It follows that $x-x_{\alpha_0}\wedge x=0$, implying $x\leq x_{\alpha}$ for all $\alpha\in A$. This shows that $x$ is a lower bound of $\{x_{\alpha}\}_{\alpha\in A}$. Next, suppose $y \in G$ is another upper bound of $\{x_{\alpha}\}_{\alpha\in A}$, i.e.,  there exists a $y\in G$ such that $y\leq x_{\alpha}$ for all $\alpha\in A$. By hypothesis, we have
             \begin{equation*}
             0\leq x_{\alpha}-y\xrightarrow{\tau}x-y.
             \end{equation*}
             It follows from (i) that $x-y\in G_+$, i.e., $y\leq x$. Therefore, $x=\inf_{\alpha\in A}\{x_{\alpha}\}$. This shows that $x_{\alpha}\downarrow x$ in $G$.
  \item [(iii)]First, we assume $y_{\alpha}\downarrow x$. Then the hypothesis implies
              \begin{equation*}
              0\leq x-x\wedge x_{\alpha}\leq y_{\alpha}-x_{\alpha}\xrightarrow{\tau}0.
              \end{equation*}
              Hence, we have $x-x\wedge x_{\alpha}\xrightarrow{\tau} 0$.
              It follows from (ii) that $0\leq x-x\wedge x_{\alpha}\uparrow 0$, yielding
              $x-x\wedge x_{\alpha}=0$. Thus, $x\leq x_{\alpha}$ for all $\alpha\in A$.
              Therefore, we have $x\leq x_{\alpha}\leq y_{\alpha}\downarrow$. Since $y_{\alpha}\leq x$, we must have $x_{\alpha}\downarrow x$.

              Next, we assume $x_{\alpha}\downarrow x$. Suppose there exists some $y\in G$ such that $x\leq y\leq y_{\alpha}$ for all $\alpha\in A$. Then for all $\alpha\in A$ we have
              \begin{equation*}
              0\leq (y-x_{\alpha})^+\leq (y_{\alpha}-x_{\alpha}).
              \end{equation*}
              By hypothesis, $(y-x_{\alpha})^+\xrightarrow{\tau} 0$. Since the net $(x_{\alpha})_{\alpha\in A}$ is decreasing, we have
              \begin{equation*}
              (y-x_{\alpha})^+\uparrow (y-x)^+=y-x.
              \end{equation*}
              It follows from (ii) that $x=y$. This shows that $y_{\alpha}\downarrow x$.
  \item [(iv)]Since $x_0$ is a cluster point of $\{x_{\alpha}\}$, there exists an increasing subnet $\{x_{\alpha_{\beta}}\}$
              of $\{x_{\alpha}\}$ such that $x_{\alpha_{\beta}}\xrightarrow{\tau}x_0$.
              It follows from (ii) that $x_{\alpha_{\beta}}\uparrow x_0$, that is, $\sup\{x_{\alpha_{\beta}}\}=x_0$ Since $\{x_{\alpha}\}$ is increasing, for each $\alpha$
              we may choose a $\beta_0$ such that $x_{\alpha_{\beta}} -  x_{\alpha}\geq 0$ for all $\beta\geq \beta_0$.
              Since $x_{\alpha_{\beta}}-x_{\alpha}\xrightarrow{\tau} x_0-x_{\alpha}$, (i) implies $x_0-x_{\alpha}\in G_+$, i.e.,
              $x_0\geq x_{\alpha}$ for all $\alpha$; hence we have $\sup \{x_{\alpha}\}\leq x_0$. Clearly, $x_0=\sup\{x_{\alpha_{\beta}}\}
              \leq \sup\{x_{\alpha}\}$; hence we must have $\sup\{x_{\alpha}\}=x_0$. Therefore, $x_{\alpha}\uparrow x_0$.
  \item [(v)]We prove the first equality only as the second can be proved in a similar manner. It is evident that
             $x$ is an upper bound of the set $\{x\wedge y\mid y\in G\}$. Choose a net $\{x_{\alpha}\}$ in $G$ such that
             $x_{\alpha}\xrightarrow{\tau}x$.  If $z$ is another upper bound of
             $\{x\wedge y\mid y\in G\}$, then we have $z-x\wedge x_{\alpha}\geq 0$ for all $\alpha$.
             Since $x\wedge x_{\alpha}\xrightarrow{\tau} x$, (i) shows that $z-x\geq 0$, i.e. $z\geq x$. Therefore,
             $x=\sup \{x\wedge y\mid y\in G\}$.
\end{enumerate}
\end{proof}

\section{Topological completion of Hausdorff locally solid $l$-groups}
Every topological group induces a uniform space; thus the concept of completeness is well-defined. Since we assume all groups are commutative, every topological group $(G, \tau)$ has a completion $(\widehat{G}, \widehat{\tau})$, though the completion may not be unique. In this section, we further assume that every topological group is Hausdorff. Then we know the completion $(\widehat{G}, \widehat{\tau})$ of $(G, \tau)$ is unique (up to group isomorphism) and Hausdorff (cf. p. 6 of \cite{BNS}). Specifically, the following theorem holds.

\begin{theorem}\label{theorem5.1}
If $(G, \tau)$ is a Hausdorff topological group, then there exists a unique (up to group isomorphism) Hausdorff topological group $(\widehat{G}, \widehat{\tau})$ having the following properties:
\begin{enumerate}
  \item [(i)]The Hausdorff topological group $(\widehat{G}, \widehat{\tau})$ is complete.
  \item [(ii)]There exists a subgroup $H$ of $\widehat{G}$ such that $H$ is isomorphic to $\widehat{G}$; hence $G$ is identified as a subgroup of $\widehat{G}$.
  \item [(iii)]The topology $\widehat{\tau}$ induces $\tau$ on $G$.
  \item [(iv)]The subgroup $G$ is $\tau$-dense in $\widehat{G}$.
  \item [(v)]If the subgroup $G$ is an ideal of $\widehat{G}$, then $G$ is order dense in $\widehat{G}$.
\end{enumerate}
In particular, if $\mathcal{B}_0$ is a $\tau$-neighborhood base at zero, then $\overline{\mathcal{B}}_0=\{\overline{U} \mid U\in \mathcal{B}_0\}$ is a $\widehat{\tau}$-neighborhood base at zero. We say $(\widehat{G}, \widehat{\tau})$ is a topological completion of $(G, \tau)$.
\end{theorem}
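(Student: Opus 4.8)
The plan is to separate the purely group-theoretic content, which is classical, from the lattice-specific assertion (v), where the real work lies. Properties (i)--(iv), the uniqueness up to group isomorphism, and the closing statement about the neighborhood base are exactly the content of the standard completion theorem for Hausdorff commutative topological groups, so I would simply invoke that theory (cf. \cite{BNS}): for a Hausdorff commutative $(G,\tau)$ the minimal Cauchy filters form a complete Hausdorff group $\widehat{G}$ containing a dense isomorphic copy of $G$ whose induced topology is $\tau$, and the closures $\overline{U}$ of a base $\mathcal{B}_0$ of $\tau$-neighborhoods of zero form a $\widehat{\tau}$-neighborhood base at zero. Because the lattice operations on a locally solid $G$ are \emph{uniformly} continuous by Theorem \ref{theorem4.1}, they extend uniquely and continuously to $\widehat{G}\times\widehat{G}\to\widehat{G}$, so $\widehat{G}$ carries a compatible $l$-group structure restricting to that of $G$; this is what gives meaning to the order-theoretic hypothesis and conclusion of (v).

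The substance of the proof is therefore statement (v). Assuming $G$ is an ideal of $\widehat{G}$, I fix $0<x\in\widehat{G}$ and must produce $y\in G$ with $0<y\le x$. First I would use the $\tau$-density of $G$ (property (iv)) to choose a net $(g_{\alpha})$ in $G$ with $g_{\alpha}\xrightarrow{\widehat{\tau}}x$. Applying the continuity of $z\mapsto z^+$ and then of the meet, and using $x^+=x$, I obtain
\begin{equation*}
g_{\alpha}^+\wedge x\ \xrightarrow{\widehat{\tau}}\ x^+\wedge x=x.
\end{equation*}
The key structural point is that each term lies in $G$: since a solid subgroup contains the positive parts of its elements, $g_{\alpha}^+\in G$; and from $0\le g_{\alpha}^+\wedge x\le g_{\alpha}^+$ the solidness of the ideal $G$ forces the order interval $[0,g_{\alpha}^+]$ into $G$, whence $g_{\alpha}^+\wedge x\in G$.

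It then remains to select a single admissible index. Every term satisfies $0\le g_{\alpha}^+\wedge x\le x$, and the net converges to $x\neq 0$ in the Hausdorff space $\widehat{G}$. Were the net frequently equal to $0$, a subnet would converge to $0$, forcing $0=x$ by uniqueness of limits (Theorem \ref{theorem3.3}); hence some term $y:=g_{\alpha}^+\wedge x$ is nonzero, and for this $y$ we have $y\in G$ and $0<y\le x$, which is precisely order density of $G$ in $\widehat{G}$.

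I expect the main obstacle to be arranging that the natural approximants to $x$ can be taken inside $G$ while remaining order-bounded above by $x$; this is exactly what dictates the use of $g_{\alpha}^+\wedge x$ in place of $g_{\alpha}$ itself, and it is where the hypothesis that $G$ is an \emph{ideal} (equivalently, a solid subgroup) is essential. The only other delicate step is the passage from a convergent net to a single nonzero element, for which Hausdorffness (Theorem \ref{theorem3.3}) cannot be dropped.
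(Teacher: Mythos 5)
Your proposal is correct and follows essentially the same route as the paper: properties (i)--(iv), uniqueness, and the neighborhood-base statement are delegated to the standard completion theory of Hausdorff commutative topological groups (citing \cite{BNS}), and (v) is proved by truncating an approximating net from the $\tau$-dense subgroup $G$ against $\widehat{x}$ and using solidness of the ideal $G$ to keep the truncated terms inside $G$. In fact your version is tighter than the paper's own proof, which merely assumes $x_{\alpha}\neq 0$ and then asserts $0<x_{\alpha}\wedge\widehat{x}\leq x_{\alpha}$ --- an inequality that needs $x_{\alpha}\geq 0$ and can still fail to be strict; your replacement of $x_{\alpha}$ by $g_{\alpha}^{+}\wedge x$ together with the Hausdorff subnet argument ruling out a net frequently equal to zero supplies exactly the justification the paper leaves implicit.
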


\begin{proof}
Only (v) needs a proof. If $G$ is an ideal of $\widehat{G}$, then for every $0<\widehat{x}\in \widehat{G}$ there exists a net $\{x_{\alpha}\}$ in $G$ such that $x_{\alpha}\xrightarrow {\widehat{\tau}} \widehat{x}$. Without loss of generality, we may assume $x_{\alpha}\neq 0$ for all $\alpha$. Clearly, for each $\alpha$ we have
\begin{equation*}
0<x_{\alpha}\wedge \widehat{x}\leq x_{\alpha}\in G.
\end{equation*}
Therefore, each $x_{\alpha}\wedge \widehat{x}$ belongs to $G$, showing that $G$ is order-dense in $\widehat{G}$.
\end{proof}
\noindent \textbf{Remark.} Indeed, the above proof also shows that if $G$ is an ideal of $\widehat{G}$, then there exists a positive increasing net $\{x_{\alpha}\}$ in $G$ such that $x_{\alpha} \xrightarrow{\widehat{\tau}}\widehat{x}$.\\

It is natural to ask whether $(G, \tau)$ is an $l$-subgroup of $(\widehat{G}, \widehat{\tau})$ if in addition $G$ is an $l$-group and $\tau$ is locally solid topology. This analogous problem for locally solid Riesz spaces was studied by several author (cf. \cite{Aliprantis1}, \cite{Fremlin2} and \cite{Kawai}) and an affirmative answer was given. The next theorem shows that we also have an affirmative answer in the case of topological $l$-groups.

\begin{theorem}\label{theorem5.2}
Suppose $(G, \tau)$ is a Hausdorff topological $l$-group and $(\widehat{G}, \widehat{\tau})$ is its topological completion. Then the $\widehat{\tau}$-closure $\overline{G_+}$ of $G_+$ is a cone of $\widehat{G}$ and $(\widehat{G}, \widehat{\tau})$ equipped with the partial order induced by $\overline{G_+}$ is a Hausdorff locally solid $l$-group containing $G$ as a $l$-subgroup. In addition, the $\widehat{\tau}$-closure of a solid subset of $G$ is a solid subset of $\widehat{G}$.
\end{theorem}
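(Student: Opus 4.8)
The plan is to verify the cone axioms for $\overline{G_+}$, transport the lattice structure from $G$ to $\widehat{G}$ via density, and finally check local solidness and closure of solid sets. First I would show $\overline{G_+}$ is a cone of $\widehat{G}$. The relation $\overline{G_+}+\overline{G_+}\subset\overline{G_+}$ follows from Theorem \ref{theorem3.2}(vii), since $\overline{G_+}+\overline{G_+}=\overline{G_++G_+}\subset\overline{G_+}$. Translation-invariance, $\widehat{x}+\overline{G_+}-\widehat{x}=\overline{G_+}$, is automatic because $\widehat{G}$ is commutative. The antisymmetry axiom $\overline{G_+}\cap(-\overline{G_+})=\{0\}$ is the delicate point: if $\widehat{x}\in\overline{G_+}\cap(-\overline{G_+})$, I would produce nets $x_\alpha,y_\alpha\in G_+$ with $x_\alpha\xrightarrow{\widehat\tau}\widehat{x}$ and $y_\alpha\xrightarrow{\widehat\tau}-\widehat{x}$, so $x_\alpha+y_\alpha\xrightarrow{\widehat\tau}0$; using that $\widehat\tau$ is locally solid (inherited via the closures $\overline{U}$ of solid neighborhoods, as in Theorem \ref{theorem5.1}) together with Theorem \ref{theorem4.8}(i) applied in $\widehat{G}$, I would force $\widehat{x}\in\overline{G_+}$ and $-\widehat{x}\in\overline{G_+}$ to collapse to $\widehat{x}=0$. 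Once $\overline{G_+}$ is a cone, it induces a partial order on $\widehat{G}$ making it a p.o. group, and this order extends the order on $G$ because $G_+=G\cap\overline{G_+}$ (here Hausdorffness of $G$, i.e. $G_+$ being $\tau$-closed in $G$ by Theorem \ref{theorem4.8}(i), is what guarantees $G\cap\overline{G_+}=G_+$ rather than something larger).

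Next I would show $\widehat{G}$ is a lattice, which reduces by Theorem \ref{theorem4.4}-style reasoning (cf. Theorem 8 of \cite{Birkhoff1}) to showing $\widehat{x}^+$ exists for each $\widehat{x}\in\widehat{G}$. The natural candidate is obtained by continuity: pick a net $x_\alpha\in G$ with $x_\alpha\xrightarrow{\widehat\tau}\widehat{x}$, and since $\tau$ is locally solid the map $x\mapsto x^+$ is uniformly continuous on $G$ by Theorem \ref{theorem4.1}, hence extends continuously to $\widehat{G}$. I would define $\widehat{x}^+$ as the limit of $x_\alpha^+$ and verify it is well-defined (independent of the net) using uniform continuity, then check it is genuinely the supremum $\widehat{x}\vee 0$ with respect to the order from $\overline{G_+}$. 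This supremum verification is where I expect the main obstacle: I must confirm both that $\widehat{x}^+\ge\widehat{x}$ and $\widehat{x}^+\ge 0$ (which follow from passing the inequalities $x_\alpha^+\ge x_\alpha$, $x_\alpha^+\ge 0$ to the limit and invoking closedness of $\overline{G_+}$), and that it is the least such bound. For the latter I would take any $\widehat{z}\in\widehat{G}$ with $\widehat{z}\ge\widehat{x}$ and $\widehat{z}\ge 0$, approximate $\widehat{z}$ by a net, and exploit density together with the order-continuity results of Theorem \ref{theorem4.8}, particularly parts (i), (ii) and (iv), to squeeze $\widehat{z}\ge\widehat{x}^+$; the subtlety is that order relations in $\widehat{G}$ are only accessible through $\overline{G_+}$ and topological limits, so every order inequality must be converted into a membership statement about the closed cone before taking limits.

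Having established that $\widehat{G}$ is an $l$-group with $G$ as an $l$-subgroup, the uniform continuity of $x\mapsto x^+$ on $\widehat{G}$ (now itself a topological $l$-group) gives local solidness of $\widehat\tau$ by the equivalence (v)$\Longleftrightarrow$(i) of Theorem \ref{theorem4.1}; alternatively, one can argue directly that the closures $\overline{U}$ of solid $\tau$-neighborhoods $U$ are $\widehat\tau$-solid neighborhoods of zero, invoking Theorem \ref{theorem4.7}(ii) in $\widehat{G}$, which simultaneously yields a neighborhood base at zero of solid sets. Hausdorffness of $(\widehat{G},\widehat\tau)$ is already guaranteed by Theorem \ref{theorem5.1}. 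Finally, the last assertion---that the $\widehat\tau$-closure of a solid subset $E$ of $G$ is solid in $\widehat{G}$---is precisely Theorem \ref{theorem4.7}(ii) applied in the locally solid topological $l$-group $(\widehat{G},\widehat\tau)$, once we observe that $E$ is also a solid subset of $\widehat{G}$ (its solidness is a statement about the order, which is preserved since $G$ is an $l$-subgroup). I therefore expect the bulk of the work, and the only genuinely hard step, to be the supremum verification for $\widehat{x}^+$ in the second paragraph; the cone axioms and the local-solidness transfer are comparatively routine applications of the earlier theorems.
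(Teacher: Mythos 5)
Your skeleton (cone axioms, uniformly continuous extension of $x\mapsto x^+$, verification that the extension is a supremum, local solidness via Theorem \ref{theorem4.1}) is the same as the paper's, but two of your steps have genuine problems.

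First, a circularity in your verification of $\overline{G_+}\cap(-\overline{G_+})=\{0\}$: you invoke local solidness of $\widehat{\tau}$ (via solid closures $\overline{U}$) and Theorem \ref{theorem4.8}(i) \emph{applied in} $\widehat{G}$. At that stage $\widehat{G}$ carries no order whatsoever --- the order is exactly what $\overline{G_+}$ is supposed to induce once it is known to be a cone --- so ``solid subset of $\widehat{G}$'', Theorem \ref{theorem4.7}(ii) in $\widehat{G}$, and Theorem \ref{theorem4.8}(i) in $\widehat{G}$ are not yet meaningful: all of them presuppose that $(\widehat{G},\widehat{\tau})$ is already a (locally solid) topological $l$-group. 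The paper avoids this by doing the squeeze entirely inside $G$: choosing $x_{\alpha}, y_{\beta}\in G_+$ with $x_{\alpha}\xrightarrow{\widehat{\tau}}\widehat{x}$ and $y_{\beta}\xrightarrow{\widehat{\tau}}-\widehat{x}$, one has $0\leq x_{\alpha}\leq x_{\alpha}+y_{\beta}$ and $x_{\alpha}+y_{\beta}\to 0$; for any solid $\tau$-neighborhood $U$ of zero \emph{in $G$}, eventually $x_{\alpha}+y_{\beta}\in U$ and hence $x_{\alpha}\in U$ by solidness in $G$, so $x_{\alpha}\to 0$ and $\widehat{x}=0$ by Hausdorffness. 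Your argument should be rerouted through $G$ in this way.

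Second, and more seriously, your final step rests on a false claim: that a solid subset $E$ of $G$ ``is also a solid subset of $\widehat{G}$'' because solidness is an order property. Solidness is relative to the ambient group: $E$ solid in $G$ means ``$|x|\leq|y|$, $y\in E$ $\Rightarrow$ $x\in E$'' for all $x\in G$, whereas solidness in $\widehat{G}$ quantifies over all $x\in\widehat{G}$, and elements of $\widehat{G}\setminus G$ dominated by elements of $E$ need not lie in $E$. Indeed $G$ itself is trivially solid in $G$, yet by Theorem \ref{theorem5.4} it is solid in $\widehat{G}$ precisely when every order interval of $G$ is $\tau$-complete, which generally fails. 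Consequently Theorem \ref{theorem4.7}(ii) cannot be applied to $E$ inside $(\widehat{G},\widehat{\tau})$, and this part of your proof does not go through. The paper proves the assertion directly: given $|\widehat{x}|\leq|\widehat{y}|$ with $\widehat{y}\in\overline{E}$, it picks nets $x_{\alpha}$ in $G$ and $y_{\beta}$ in $E$ converging to $\widehat{x}$ and $\widehat{y}$, forms the doubly indexed truncated net $z_{\alpha,\beta}=x_{\alpha}\wedge|y_{\beta}|$ (resp. $(-x_{\alpha})\vee(-|y_{\beta}|)$ when $x_{\alpha}<0$), observes $|z_{\alpha,\beta}|\leq|y_{\beta}|$ so that $z_{\alpha,\beta}\in E$ by solidness of $E$ \emph{in $G$}, and checks $z_{\alpha,\beta}\xrightarrow{\widehat{\tau}}\widehat{x}$, whence $\widehat{x}\in\overline{E}$. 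An argument of this kind is needed to close your gap.
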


\begin{proof}
First, we show that the $\widehat{\tau}$-closure $\overline{G_+}$ of $G_+$ is a cone in $\widehat{G}$. To this end, we need to verify conditions (i) and (ii) in the definition of cones (cf. Section 2).
To verify (i), notice that
$G_+ + G_+\subset G_+$ holds trivially; hence the continuity of addition immediately leads to (i). To verify (ii), take $x\in \overline{G_+}\cap (-\overline{G_+})$. Then there exists two nets $\{x_{\alpha}\}$ and $\{y_{\beta}\}$ in $G$ such that $x_{\alpha}\xrightarrow{\widehat{\tau}}x$ and $y_{\beta}\xrightarrow{\widehat{\tau}} -x$. Thus, $0\leq x_{\alpha}\leq x_{\alpha}+y_{\beta} \underset {(\alpha, \beta)} {\xrightarrow{\tau}} 0 $. It follows that $x_{\alpha}\xrightarrow{\tau} 0$; hence $x=0$. This shows that $x\in \overline{G_+}\cap (-\overline{G_+})=\{0\}$. Therefore, $\overline{G_+}$ is a cone of $\widehat{G}$.

Let $G$ be ordered by the partial order induced by $\widehat{G}_+$ according to Equation (\ref{4.6}). To complete the proof of the first statement, it suffices to show that $(\widehat{x})^+$ exists in $\widehat{G}$ for all $x\in \widehat{G}$ (cf. Theorem 8 in \cite{Birkhoff1}). Since $\tau$ is locally solid, Theorem \ref{theorem4.1} shows that the map $T: x\rightarrow x^+$ is uniformly continuous. In view of Theorem \ref{theorem5.1}, there exists a unique uniformly continuous extension $\overline{T}$ of $T$ to $(\widehat{G}, \widehat{\tau})$. Thus, it remains to show that $\overline{T}(\widehat{x})=(\widehat{x})^+$. To this end, take $\widehat{x}\in \widehat{G}$ and choose a net $\{x_{\alpha}\}$ in $G$ such that $x_{\alpha}\xrightarrow{\widehat{\tau}}\widehat{x}$.
Since $\overline{T}(x_{\alpha})=x^+_{\alpha}\geq x_{\alpha}$, i.e., $x_{\alpha}^+-x_{\alpha}\in G_+$, we have
$T(\widehat{x})-\widehat{x}=(x)^+ - \widehat{x}\in \widehat{G}$, showing $\overline{T}(\widehat{x})\geq (\widehat{x})^+$. If $\widehat{y}\in (\widehat{G})_+$ and $\widehat{y}>(\widehat{x})^+$, then $\widehat{y}>0$; hence $\overline{T}(\widehat{y})=\widehat{y}$. Choose another net $\{y_{\alpha}\}$ with the same index set as $\{x_{\alpha}\}$ such that $y_{\alpha}\xrightarrow{\widehat{\tau}}\widehat{y}$; then
\begin{equation*}
y_{\alpha}-x_{\alpha} \xrightarrow{\widehat{\tau}} \widehat{y}-\widehat{x}\geq \widehat{y} -(\widehat{x})^+>0,
\end{equation*}
implying $\widehat{y}-\overline{T}(\widehat{x})>0$, i.e, $\widehat{y}>\overline{T}(\widehat{x})$. Therefor, we must have $\widehat{x}=\overline{T}(\widehat{x})$.

To show the second statement, we let $E$ be a solid subset in $G$ and $\overline{E}$ be its $\widehat{\tau}$-closure in $\widehat{G}$.
Suppose $|\widehat{x}|\leq |\widehat{y}|$, where $\widehat{x}\in \widehat{G}$ and $\widehat{y}\in \overline{E}$.
Then we can choose a net $\{x_{\alpha}\}$ in $\overline{E}$ such that $x_{\alpha}\xrightarrow{\widehat{\tau}}x$ and a net $\{y_{\beta}\}$ in $G$ such that $y_{\beta}\xrightarrow {\widehat{\tau}}y$. Define a net $\{z_{\alpha, \beta}\}$ via the formula
\begin{equation*}
z_{\alpha, \beta}=\left\{
                    \begin{array}{ll}
                      x_{\alpha}\wedge |y_{\beta}|, & \hbox{if $x_{\alpha}\geq 0$;} \\
                      (-x_{\alpha})\vee (-|y_{\beta}|), & \hbox{if $x_{\alpha}<0$.}
                    \end{array}
                  \right.
\end{equation*}
Then $z_{\alpha, \beta}\xrightarrow{\widehat{\tau}} \widehat{x}$. Also, $|z_{\alpha, \beta}|\leq |y_{\beta}|$
for all $\alpha$ and $\beta$; hence the net $\{z_{\alpha, \beta}\}$ is in $E$. 
It follows that $x\in \overline{E}$, proving that $\overline{E}$ is solid.
\end{proof}

\begin{corollary}\label{corollary5.1}
Suppose $(G, \tau)$ is a Hausdorff locally solid topological $l$-group and $(\widehat{G}, \widehat{\tau})$ is its topological completion. Then $G$ is an $l$-subgroup of $\widehat{G}$. In addition, if $\mathcal{B}_0$ is a $\tau$-neighborhood base at zero, then $\overline{\mathcal{B}_0}=\{\overline{U}\mid U\in \mathcal{B}_0\}$ is a $\widehat{\tau}$-neighborhood base at zero consisting of solid sets.
\end{corollary}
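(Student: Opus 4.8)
The plan is to derive everything from the two preceding results, Theorem \ref{theorem5.1} and Theorem \ref{theorem5.2}, so that the corollary becomes essentially a matter of assembling their conclusions in the locally solid setting rather than proving anything new.

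First I would dispose of the claim that $G$ is an $l$-subgroup of $\widehat{G}$. Since $(G, \tau)$ is Hausdorff and locally solid, the hypotheses of Theorem \ref{theorem5.2} are satisfied, and that theorem already asserts that $(\widehat{G}, \widehat{\tau})$, ordered by the cone $\overline{G_+}$, is a Hausdorff locally solid $l$-group containing $G$ as an $l$-subgroup. Thus the first statement needs no argument beyond a citation of Theorem \ref{theorem5.2}.

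For the second statement I would begin by using local solidity to choose the base carefully. Because $\tau$ is locally solid, we may take $\mathcal{B}_0$ to be a $\tau$-neighborhood base at zero each of whose members is solid; this is the place where the locally solid hypothesis is genuinely invoked, and it is what makes the conclusion true, since for an arbitrary base the closures need not be solid. With such a $\mathcal{B}_0$ in hand, the final clause of Theorem \ref{theorem5.1} guarantees that $\overline{\mathcal{B}_0}=\{\overline{U}\mid U\in\mathcal{B}_0\}$ is a $\widehat{\tau}$-neighborhood base at zero, so it only remains to verify solidity of each member.

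The solidity of the closures is supplied directly by the last assertion of Theorem \ref{theorem5.2}: the $\widehat{\tau}$-closure of a solid subset of $G$ is a solid subset of $\widehat{G}$. Applying this to each solid $U\in\mathcal{B}_0$ shows that every $\overline{U}$ is solid in $\widehat{G}$, and hence $\overline{\mathcal{B}_0}$ is a $\widehat{\tau}$-neighborhood base at zero consisting of solid sets. I expect no real obstacle here; the only point requiring care is the reading of the phrase ``$\mathcal{B}_0$ is a $\tau$-neighborhood base at zero,'' which must be understood as a base of \emph{solid} sets produced by local solidity, for otherwise the passage to closures carries no solidity. The substantive work—extending the positive-part map to $\widehat{G}$ and showing solidity is preserved under $\widehat{\tau}$-closure—has already been carried out in Theorem \ref{theorem5.2}.
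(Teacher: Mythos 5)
Your proposal is correct and is precisely the derivation the paper intends: the corollary is stated without proof as an immediate consequence of Theorem \ref{theorem5.1} (closures of a base form a $\widehat{\tau}$-base at zero) and Theorem \ref{theorem5.2} ($G$ is an $l$-subgroup and closures of solid sets are solid). Your remark that $\mathcal{B}_0$ must be read as a base of \emph{solid} neighborhoods, which local solidity provides, is the right reading---for an arbitrary base the closures need not be solid---so nothing is missing.
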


\begin{theorem}\label{theorem5.3}
Let $(\widehat{G}, \widehat{\tau})$ be the topological completion of a Hausdorff locally solid topological $l$-group $(G, \tau)$. Then the following two statements are equivalent.
\begin{enumerate}
  \item [(i)]$x_{\alpha}\downarrow 0$ in $G$ implies $x_{\alpha}\downarrow 0$ in $\widehat{G}$ for all net $\{x_{\alpha}\}$ in $G$.
  \item [(ii)]If $\{x_{\alpha}\}$ is a positive Cauchy $\tau$-net and $x_{\alpha}\xrightarrow{o} 0$, then
                $x_{\alpha}\xrightarrow{\tau} 0$.
\end{enumerate}
\end{theorem}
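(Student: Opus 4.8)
The plan is to prove the equivalence of (i) and (ii) by establishing both implications directly, working with the characterization of order convergence and exploiting the fact that $G$ sits inside $\widehat{G}$ as an $l$-subgroup (Corollary \ref{corollary5.1}), together with the monotone-limit properties collected in Theorem \ref{theorem4.8}. Throughout I would keep in mind that order convergence $x_\alpha \xrightarrow{o} 0$ means there is a net $(y_\alpha)$ with $|x_\alpha| \le y_\alpha \downarrow 0$, and that for a positive net $x_\alpha \downarrow$ the statement $x_\alpha \downarrow 0$ is exactly $\inf_\alpha x_\alpha = 0$.

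For the implication (i) $\Longrightarrow$ (ii), I would start with a positive $\tau$-Cauchy net $(x_\alpha)$ satisfying $x_\alpha \xrightarrow{o} 0$, so that there is a net $(y_\alpha)$ in $G$ with $0 \le x_\alpha \le y_\alpha \downarrow 0$ in $G$. Applying hypothesis (i) to $(y_\alpha)$ gives $y_\alpha \downarrow 0$ in $\widehat{G}$ as well. The key step is to convert the order-null behaviour of the dominating net $(y_\alpha)$ into $\widehat{\tau}$-convergence. Since $(x_\alpha)$ is $\tau$-Cauchy and $\widehat{G}$ is complete, $(x_\alpha)$ has a $\widehat{\tau}$-limit $\widehat{x} \in \widehat{G}$; because $G_+$ is $\widehat{\tau}$-closed (Theorem \ref{theorem4.8}(i) applied in $\widehat{G}$, valid since $\widehat{G}$ is Hausdorff locally solid by Corollary \ref{corollary5.1}), we get $\widehat{x} \ge 0$. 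The decisive estimate is $0 \le \widehat{x} \le y_\alpha$ for every $\alpha$, obtained by passing to the $\widehat{\tau}$-limit in $x_\beta \le y_\alpha$ for $\beta \ge \alpha$ and using closedness of the positive cone; combined with $y_\alpha \downarrow 0$ in $\widehat{G}$ this forces $\widehat{x} = 0$, and then $x_\alpha \xrightarrow{\widehat{\tau}} 0$ restricts to $x_\alpha \xrightarrow{\tau} 0$ on $G$ since $\widehat{\tau}$ induces $\tau$.

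For the converse (ii) $\Longrightarrow$ (i), I would take a net with $x_\alpha \downarrow 0$ in $G$ and show $\inf_\alpha x_\alpha = 0$ computed in $\widehat{G}$. Such a net is automatically positive, decreasing, and order-null in $G$, hence order-null; I would argue it is $\tau$-Cauchy by using the local solidness of $\tau$: given a solid $\tau$-neighborhood $U$ of zero, the solid hull together with $x_\alpha \downarrow 0$ yields indices beyond which differences $x_\alpha - x_\beta$ are dominated by elements of $U$ and hence lie in $U$. Then hypothesis (ii) gives $x_\alpha \xrightarrow{\tau} 0$, so $x_\alpha \xrightarrow{\widehat{\tau}} 0$ in $\widehat{G}$; since $(x_\alpha)$ is decreasing in $\widehat{G}$, Theorem \ref{theorem4.8}(ii) (applied in the Hausdorff locally solid group $\widehat{G}$) upgrades this to $x_\alpha \downarrow 0$ in $\widehat{G}$, which is precisely (i).

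I expect the main obstacle to be the careful bookkeeping with two different orders and two different topologies simultaneously: every inequality and infimum must be read in the correct group, and each transition from $G$ to $\widehat{G}$ relies on the density of $G$, the closedness of $G_+$ in $\widehat{G}$, and the fact that $\widehat{\tau}$ restricts to $\tau$. The single most delicate point is establishing $0 \le \widehat{x} \le y_\alpha$ for all $\alpha$ in the forward direction, since it requires interchanging the order of a limit and an order inequality; I would isolate this as the crux and justify it via the $\widehat{\tau}$-closedness of $G_+$ applied to the nets $y_\alpha - x_\beta \ge 0$.
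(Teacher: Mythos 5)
Your proof of (i) $\Longrightarrow$ (ii) is correct and is essentially the paper's own argument: dominate the positive $\tau$-Cauchy net by $y_\alpha \downarrow 0$, pass to the $\widehat{\tau}$-limit $\widehat{x}$, use the $\widehat{\tau}$-closedness of the positive cone to obtain $0 \le \widehat{x} \le y_\beta$ for every $\beta$, and invoke (i) to force $\widehat{x}=0$.

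The converse, however, contains a genuine gap: the claim that $x_\alpha \downarrow 0$ in $G$ forces $(x_\alpha)$ to be $\tau$-Cauchy is false, and no bookkeeping with solid neighborhoods can repair it. For $\alpha, \beta \ge \gamma$ one only gets $|x_\alpha - x_\beta| \le x_\gamma$, so to place these differences inside a solid neighborhood $U$ you would need $x_\gamma \in U$ eventually, i.e.\ $x_\alpha \xrightarrow{\tau} 0$ already --- which is exactly the Lebesgue-type property you are trying to derive, so the argument is circular. Moreover that property is genuinely stronger than (i) and (ii): take $G = \ell_\infty$ with the supremum norm and the coordinatewise order. This space is norm complete, so $\widehat{G}=G$ and both (i) and (ii) hold trivially; yet the sequence $x_n = (0,\dots,0,1,1,\dots)$ (zeros in the first $n$ coordinates) satisfies $x_n \downarrow 0$ while $\|x_n - x_m\|_\infty = 1$ for $n \ne m$, so it is neither $\tau$-Cauchy nor $\tau$-null. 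The point is that (i) asserts only that the embedding of $G$ into $\widehat{G}$ preserves the infimum of the net, not that the net converges topologically. Accordingly, the paper proves (ii) $\Longrightarrow$ (i) by contradiction and manufactures a \emph{different} net to feed into (ii): if $x_\alpha \downarrow 0$ fails in $\widehat{G}$, pick $\widehat{y}$ with $0 < \widehat{y} \le x_\alpha$ for all $\alpha$, choose a net $(y_\beta)$ in $G_+$ with $y_\beta \xrightarrow{\widehat{\tau}} \widehat{y}$, and form the double net $z_{\alpha,\beta} = x_\alpha \wedge y_\beta \in G$. Birkhoff's inequality together with $x_\alpha \wedge \widehat{y} = \widehat{y}$ gives $|z_{\alpha,\beta} - \widehat{y}| \le |y_\beta - \widehat{y}|$, so $z_{\alpha,\beta} \xrightarrow{\widehat{\tau}} \widehat{y}$ by local solidness of $\widehat{\tau}$; in particular $(z_{\alpha,\beta})$ is a positive $\tau$-Cauchy net in $G$, and it is order-null because $0 \le z_{\alpha,\beta} \le x_\alpha \downarrow 0$. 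Hypothesis (ii) then yields $z_{\alpha,\beta} \xrightarrow{\tau} 0$, contradicting $\widehat{y} > 0$ since $\widehat{G}$ is Hausdorff. This meet-with-approximants construction is the missing idea in your argument.
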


\begin{proof}
(i) $\Longrightarrow$ (ii). Let $\{x_{\alpha}\}$ be a positive $\tau$-Cauchy net with $x_{\alpha}\xrightarrow{o} 0$. Then there exists $\widehat{x}\in \widehat{G}$ such that $x\xrightarrow{\widehat{\tau}}\widehat{x}$. We need to show $\widehat{x}=0$ which implies $x_{\alpha}\xrightarrow{\tau} 0$. By the definition of order-convergence, there exists a net $\{ y_{\alpha} \}$ in $G$ such that $0\leq x_{\alpha}\leq y_{\alpha}\downarrow 0$ in $G$. Fix an index $\beta$, we have
\begin{equation*}
0\leq x_{\alpha}\leq y_{\alpha}\leq y_{\beta}, \quad\text{for $\alpha\geq\beta$}.
\end{equation*}
Therefore, we have $0\leq \widehat{x}\leq y_{\beta}\downarrow 0$ in $G$. By hypothesis, we have $0\leq x\leq y_{\beta}\downarrow 0$ in $\widehat{G}$, implying $\widehat{x}=0$.

(ii) $\Longrightarrow$ (i). Let $x_{\alpha}\downarrow 0$ in $G$. We need to show $x_{\alpha}\downarrow 0$ in $\widehat{G}$. Suppose not. Then there exists $\widehat{y}$ in $\widehat{G}$ such that $0<\widehat{y}\leq x_{\alpha}$ in $\widehat{G}$ for all $\alpha$. Choose a net $\{y_{\beta}\}$ in $G_+$ such that $y_{\beta}\xrightarrow{\widehat{\tau}} \widehat{y}$ and invoke Theorem 19 of \cite{Birkhoff1} to obtain
\begin{equation*}
|x_{\alpha} \wedge y_{\beta}-\widehat{y}|\leq |y_{\beta}-\widehat{y}|\ \text{for all $\alpha$ and $\beta$}.
\end{equation*}
It follows that $x_{\alpha}\wedge y_{\beta}\underset{(\alpha, \beta)}{\xrightarrow{\widehat{\tau}}} \widehat{y}>0$. On the other hand, $0\leq x_{\alpha}\wedge y_{\beta}\leq x_{\alpha}$ and $x_{\alpha}\downarrow 0$ imply $x_{\alpha}\wedge y_{\beta} \underset{(\alpha, \beta)}{\xrightarrow{o}} 0$. By hypothesis, we have
$x_{\alpha}\wedge y_{\beta} \underset{(\alpha, \beta)}{\xrightarrow{\widehat{\tau}}}0$, a contradiction.
\end{proof}

\begin{theorem}\label{theorem5.4}
Suppose $(G, \tau)$ is a Hausdorff locally solid topological $l$-group and $(\widehat{G}, \widehat{\tau})$ is its topological completion. Then the following two statements are equivalent.
\begin{enumerate}
  \item [(i)]$G$ is an ideal of $\widehat{G}$.
  \item [(ii)]Every order interval of $G$ is $\tau$-complete.
\end{enumerate}
\end{theorem}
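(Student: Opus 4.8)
The plan is to prove the two implications separately, relying throughout on the structural facts already established: by Theorem~\ref{theorem5.2} and Corollary~\ref{corollary5.1}, $(\widehat{G},\widehat{\tau})$ is again a Hausdorff locally solid $l$-group in which $G$ sits as an $l$-subgroup carrying the restricted order, and $\tau=\widehat{\tau}|_G$. Two consequences will be used repeatedly. First, since $\widehat{G}$ is Hausdorff and locally solid, Theorem~\ref{theorem4.8}(i) shows that $\widehat{G}_+$ is $\widehat{\tau}$-closed, so every order interval $[a,b]=(a+\widehat{G}_+)\cap(b-\widehat{G}_+)$ in $\widehat{G}$ is $\widehat{\tau}$-closed. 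Second, because $\tau$ is the subspace topology, a net in $G$ is $\tau$-Cauchy exactly when it is $\widehat{\tau}$-Cauchy, and it $\tau$-converges in $G$ if and only if its (unique, by Hausdorffness) $\widehat{\tau}$-limit already lies in $G$.

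For (i)$\Rightarrow$(ii), I would fix an order interval $[a,b]$ of $G$ and take a $\tau$-Cauchy net $\{x_{\alpha}\}$ in it. Completeness of $\widehat{G}$ furnishes a $\widehat{\tau}$-limit $\widehat{x}$, and the closedness of the order interval in $\widehat{G}$ forces $a\le \widehat{x}\le b$ there. Then $0\le \widehat{x}-a\le b-a$ with $b-a\in G$; invoking that $G$ is an ideal, hence solid, yields $\widehat{x}-a\in G$, so $\widehat{x}\in G$ and in fact $\widehat{x}\in[a,b]$. Thus the net converges within $[a,b]$ for $\tau$, establishing $\tau$-completeness of the interval.

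For (ii)$\Rightarrow$(i), I would first reduce the solidness of the $l$-subgroup $G$ to the positive case: given $|\widehat{x}|\le|y|$ with $y\in G$, it suffices to show that every $\widehat{u}$ with $0\le \widehat{u}\le v$ and $v\in G_+$ lies in $G$, because then $\widehat{x}^+$ and $\widehat{x}^-$ both lie in $G$ and hence so does $\widehat{x}=\widehat{x}^+-\widehat{x}^-$. To treat this positive case, choose a net $\{x_{\alpha}\}$ in $G$ with $x_{\alpha}\xrightarrow{\widehat{\tau}}\widehat{u}$ and truncate it by setting $z_{\alpha}=(x_{\alpha}\vee 0)\wedge v$. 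Each $z_{\alpha}$ lies in the order interval $[0,v]$ of $G$, and continuity of the lattice operations in $\widehat{G}$ gives $z_{\alpha}\xrightarrow{\widehat{\tau}}(\widehat{u}\vee 0)\wedge v=\widehat{u}$. Being $\widehat{\tau}$-convergent, $\{z_{\alpha}\}$ is $\tau$-Cauchy inside $[0,v]$; hypothesis (ii) makes this interval $\tau$-complete, so $z_{\alpha}\xrightarrow{\tau}z$ for some $z\in[0,v]\subset G$, and uniqueness of $\widehat{\tau}$-limits forces $z=\widehat{u}$, whence $\widehat{u}\in G$.

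The main obstacle, and the place where local solidness is essential, is the truncation step in (ii)$\Rightarrow$(i): one must manufacture, from an arbitrary approximating net, a Cauchy net that both stays inside a fixed order interval of $G$ and still converges to $\widehat{u}$. The truncation $z_{\alpha}=(x_{\alpha}\vee 0)\wedge v$ does exactly this, but its convergence to $\widehat{u}$ rests on the good behavior of $\vee$ and $\wedge$, i.e. on the fact from Theorem~\ref{theorem4.1} that local solidness renders the lattice operations (uniformly) continuous; without that the argument collapses. The remaining care is the bookkeeping between $\tau$ and $\widehat{\tau}$, which is handled by the observations in the first paragraph.
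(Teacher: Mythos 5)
Your proof is correct, and at the level of strategy it matches the paper's: for (i)$\Rightarrow$(ii) take the $\widehat{\tau}$-limit in the complete group $\widehat{G}$, trap it in the order interval, and use solidness of $G$ to pull it back into $G$; for (ii)$\Rightarrow$(i) approximate the dominated element by a net from the dense subgroup $G$ and invoke $\tau$-completeness of an order interval. However, your execution differs at exactly the two places where the paper's own argument is loose, and in both cases your version is the sound one. First, the paper argues both implications through the case split ``$\widehat{z}\geq 0$ or $\widehat{z}<0$,'' which is not exhaustive in an $l$-group, since an element need not be comparable to $0$; you avoid this entirely, in (i)$\Rightarrow$(ii) by translating to $0\leq \widehat{x}-a\leq b-a$ and applying solidness to $\widehat{x}-a$, and in (ii)$\Rightarrow$(i) by reducing to the positive case via $\widehat{x}=\widehat{x}^+-\widehat{x}^-$ with $0\leq \widehat{x}^{\pm}\leq |y|$ (the paper even starts that implication from the hypothesis $\widehat{x}\leq y$ rather than $|\widehat{x}|\leq |y|$, which is not the statement of solidness). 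Second, in (ii)$\Rightarrow$(i) the paper chooses a net $\{x_{\alpha}\}$ converging to $\widehat{x}$ but never actually uses it: it simply asserts $[0,|y|]\subset G$, and that inclusion is the entire content of the implication. Your truncation $z_{\alpha}=(x_{\alpha}\vee 0)\wedge v$, together with the continuity of the lattice operations on $\widehat{G}$ (Theorem \ref{theorem4.1} via Theorem \ref{theorem5.2}) and the $\tau$-completeness of $[0,v]$, is precisely the missing argument; the same truncation device is used elsewhere in the paper (Theorem \ref{theorem4.7}, Theorem \ref{theorem5.6}), so you are supplying the step the author presumably intended. In short: same route, but your proof closes genuine gaps in the paper's.
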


\begin{proof}
(i) $\Longrightarrow$ (ii). Let $[x, y]$ be an order interval in $G$ and $\{z_{\alpha}\}$ be a $\tau$-Cauchy net in $[x, y]$. Then $x\leq z_{\alpha}\leq y$ for all $\alpha$. Since $z_{\alpha}\xrightarrow {\widehat{\tau}} \widehat{z}$ for some $\widehat{z}\in \widehat{G}$, we have $x\leq \widehat{z} \leq y$. If $\widehat{z}\geq 0$, then $y\in G_+$; hence $\widehat{z}\in G$ by the solidness of $G$. If $\widehat{z}<0$, then $|\widehat{z}|\leq |x|$ and $x\in G$; the solidness of $G$ implies $\widehat{z}\in G$. In either case, we have $\widehat{z}\in G$, showing that $[x, y]$ is $\tau$-complete.

(ii) $\Longrightarrow$ (i). Assume $\widehat{x}\leq y$, where $\widehat{x} \in \widehat{G}$ and $y\in G$. Choose a net $\{x_{\alpha}\}$ in $G$ such that $x_{\alpha}\xrightarrow{\widehat{\tau}} \widehat{x}$. If $\widehat{x}\geq 0$, then $\widehat{x}\in [0, |y|]\subset G$; if $\widehat{x}<0$, then $\widehat{x}\in [-|y|, 0]\subset G$. In either case, we have $\widehat{x}\in G$, showing that $G$ is an ideal of $\widehat{G}$.
\end{proof}

Theorem \ref{theorem5.1} (v) and Theorem \ref{theorem5.4} immediately lead to the following theorem.

\begin{theorem}\label{theorem5.5}
Suppose $(G, \tau)$ is a Hausdorff locally solid topological $l$-group. Then the following two statements are equivalent.
\begin{enumerate}
  \item [(i)]$(G, \tau)$ is $\tau$-complete.
  \item [(ii)]Every order interval of $G$ is $\tau$-complete and each increasing $\tau$-Cauchy net in $G_+$ is $\tau$-convergent.
\end{enumerate}
\end{theorem}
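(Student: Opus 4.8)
The plan is to reduce both implications to Theorem \ref{theorem5.4} and the Remark following Theorem \ref{theorem5.1}, together with the standard fact that a dense subgroup of its own completion is complete if and only if it coincides with the completion. Throughout I work inside the topological completion $(\widehat{G}, \widehat{\tau})$, which by Corollary \ref{corollary5.1} is a Hausdorff locally solid $l$-group containing $G$ as an $l$-subgroup.

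For (i) $\Longrightarrow$ (ii): if $(G, \tau)$ is $\tau$-complete then, since $G$ is $\tau$-dense in $\widehat{G}$ by Theorem \ref{theorem5.1}(iv), completeness forces $G = \widehat{G}$. In particular $G$ is (trivially) an ideal of $\widehat{G}$, so Theorem \ref{theorem5.4} yields that every order interval of $G$ is $\tau$-complete; and since $G$ is complete, every $\tau$-Cauchy net converges, so a fortiori each increasing $\tau$-Cauchy net in $G_+$ is $\tau$-convergent.

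For (ii) $\Longrightarrow$ (i): the first half of (ii) says every order interval of $G$ is $\tau$-complete, so Theorem \ref{theorem5.4} gives that $G$ is an ideal of $\widehat{G}$. To prove $G$ is complete it suffices to show $G = \widehat{G}$, and since $G$ is a subgroup it is enough to check that every $\widehat{x}$ with $0 < \widehat{x} \in \widehat{G}$ lies in $G$, decomposing a general element as $\widehat{x} = \widehat{x}^+ - \widehat{x}^-$. Because $G$ is an ideal of $\widehat{G}$, the Remark after Theorem \ref{theorem5.1} supplies a positive increasing net $\{x_{\alpha}\}$ in $G$ with $x_{\alpha} \xrightarrow{\widehat{\tau}} \widehat{x}$. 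A $\widehat{\tau}$-convergent net is $\widehat{\tau}$-Cauchy, and since $\widehat{\tau}$ induces $\tau$ on $G$ by Theorem \ref{theorem5.1}(iii) and the $x_{\alpha}$ lie in $G$, the net $\{x_{\alpha}\}$ is an increasing positive $\tau$-Cauchy net. By the second half of hypothesis (ii) it is $\tau$-convergent to some $x \in G$, whence $x_{\alpha} \xrightarrow{\widehat{\tau}} x$; since $\widehat{G}$ is Hausdorff, limits are unique, so $\widehat{x} = x \in G$. This proves $G = \widehat{G}$, i.e. $(G, \tau)$ is $\tau$-complete.

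The main obstacle is making the hypothesis on increasing Cauchy nets actually applicable: the second condition in (ii) only controls \emph{increasing positive} nets, so I must guarantee that the approximating net for an arbitrary positive $\widehat{x}$ can be chosen increasing and positive, not merely convergent. This is exactly what the Remark after Theorem \ref{theorem5.1} provides once $G$ is known to be an ideal, which is why Theorem \ref{theorem5.4} must be invoked first; the reduction to positive elements via $\widehat{x} = \widehat{x}^+ - \widehat{x}^-$ then closes the argument.
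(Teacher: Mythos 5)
Your proposal is correct and is essentially the paper's own argument spelled out in full: the paper proves this theorem by the one-line observation that Theorem \ref{theorem5.4} (identifying condition (ii)'s interval-completeness with $G$ being an ideal of $\widehat{G}$) together with Theorem \ref{theorem5.1}(v) and its accompanying Remark (supplying the increasing positive net in $G$ that $\widehat{\tau}$-converges to a given positive $\widehat{x}$) immediately yield the equivalence, which is exactly the reduction you carried out, including the decomposition $\widehat{x}=\widehat{x}^+-\widehat{x}^-$ and the uniqueness of limits in the Hausdorff completion.
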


The sequential version of the above theorem is as follows.

\begin{theorem}\label{theorem5.6}
Suppose $(G, \tau)$ is a Hausdorff locally solid topological $l$-group. Then the following two statements are equivalent.
\begin{enumerate}
  \item [(i)]$(G, \tau)$ is sequentially $\tau$-complete.
  \item [(ii)]Every order interval of $G$ is sequentially $\tau$-complete and each increasing $\tau$-Cauchy sequence in $G_+$ is $\tau$-convergent.
\end{enumerate}
\end{theorem}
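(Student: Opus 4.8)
The plan is to prove the two implications separately, treating (i) $\Longrightarrow$ (ii) as routine and concentrating the real work on (ii) $\Longrightarrow$ (i). For the easy direction, I would first observe that every order interval $[x,y]$ is $\tau$-closed: by Theorem \ref{theorem4.8}(i) the positive cone $G_+$ is $\tau$-closed, and since $[x,y]=(x+G_+)\cap(y-G_+)$ is an intersection of translates of $\pm G_+$, it too is $\tau$-closed. A $\tau$-Cauchy sequence lying in $[x,y]$ is then $\tau$-Cauchy in $G$, hence $\tau$-convergent to some point of $G$ by sequential completeness, and that limit lies in the closed set $[x,y]$; thus every order interval is sequentially $\tau$-complete. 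That increasing $\tau$-Cauchy sequences in $G_+$ converge is immediate from sequential completeness, so (i) $\Longrightarrow$ (ii) follows at once.

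For (ii) $\Longrightarrow$ (i), I would start with an arbitrary $\tau$-Cauchy sequence $\{z_n\}$ and reduce to producing a $\tau$-convergent subsequence, since a $\tau$-Cauchy sequence with a convergent subsequence itself converges. Using Theorem \ref{theorem3.5} together with local solidness, I would choose a sequence $\{V_k\}$ of solid $\tau$-neighborhoods of zero with $V_{k+1}+V_{k+1}\subset V_k$, and then extract a subsequence $\{z_{n_k}\}$ with $z_{n_{k+1}}-z_{n_k}\in V_k$. Writing $u_k=|z_{n_{k+1}}-z_{n_k}|$, the solidness of $V_k$ gives $u_k\in V_k$, and the central device is the increasing sequence of partial sums $s_m=\sum_{k=1}^m u_k$ in $G_+$.

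The heart of the argument is to show that $\{s_m\}$ is $\tau$-Cauchy: a telescoping induction on $m'-m$ using $V_{k+1}+V_{k+1}\subset V_k$ yields $s_{m'}-s_m=\sum_{k=m+1}^{m'}u_k\in V_m$ for all $m'>m$. Since $\{s_m\}$ is increasing and positive, hypothesis (ii) gives $s_m\xrightarrow{\tau}s$ for some $s\in G_+$, and Theorem \ref{theorem4.8}(ii) upgrades this to $s_m\uparrow s$, so $s_m\le s$ for every $m$. Invoking Birkhoff's inequality (the triangle inequality $|a+b|\le|a|+|b|$ for an $l$-group, cf. \cite{Birkhoff1}), I would then bound $|z_{n_k}-z_{n_1}|\le\sum_{i=1}^{k-1}u_i=s_{k-1}\le s$, which places the entire subsequence inside the order interval $[\,z_{n_1}-s,\ z_{n_1}+s\,]$.

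Finally, $\{z_{n_k}\}$ is a $\tau$-Cauchy sequence inside an order interval, which is sequentially $\tau$-complete by hypothesis; hence it $\tau$-converges to some $z$ in that interval, and therefore the original sequence $\{z_n\}$ $\tau$-converges to $z\in G$. I expect the main obstacle to be the neighborhood bookkeeping that forces $\{s_m\}$ to be $\tau$-Cauchy: the rapidly-Cauchy extraction and the telescoping estimate are precisely what let the order structure — through the supremum $s$ — produce a single order interval trapping the subsequence. It is worth noting that this sequential route sidesteps the completion $\widehat{G}$ and the ideal criterion of Theorem \ref{theorem5.4} that drove the net version in Theorem \ref{theorem5.5}.
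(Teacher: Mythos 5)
Your direction (i) $\Longrightarrow$ (ii) is fine, but your proof of (ii) $\Longrightarrow$ (i) has a genuine gap at precisely the step you call the heart of the argument: the claim that the partial sums $s_m=\sum_{k=1}^m u_k$ are $\tau$-Cauchy. The telescoping estimate $s_{m'}-s_m\in V_m$ for $m'>m$ is correct, but it controls the differences only against the fixed countable family $\{V_m\}$; to conclude Cauchyness you would need every $\tau$-neighborhood $U$ of zero to contain some $V_m$, i.e.\ you would need $\{V_k\}$ to be a countable neighborhood base at zero. Such a base exists only when $\tau$ is first countable (equivalently, pseudometrizable), which the theorem does not assume. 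Moreover, the defect cannot be repaired by a cleverer extraction. Take $G=\mathbb{R}^{\mathbb{R}}$ with the product topology and pointwise order: this is a complete Hausdorff locally solid topological $l$-group, so hypothesis (ii) holds. Index continuum-many coordinates by the subsequences $\sigma=(\sigma_1<\sigma_2<\cdots)$ of $\mathbb{N}$, and define $z_n^{(\sigma)}=(-1)^k/\sqrt{k}$ whenever $\sigma_k\le n<\sigma_{k+1}$ (and $z_n^{(\sigma)}=0$ for $n<\sigma_1$). Then $z_n\to 0$ pointwise, so $\{z_n\}$ is $\tau$-Cauchy; yet for \emph{any} subsequence taken along indices $\tau=(m_j)$, the $\tau$-th coordinate satisfies $z_{m_j}^{(\tau)}=(-1)^j/\sqrt{j}$, so $u_j^{(\tau)}\ge 1/\sqrt{j}$ and the coordinate $s_m^{(\tau)}$ increases without bound. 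Hence no subsequence has $\tau$-Cauchy partial sums of absolute differences, and your argument halts before hypothesis (ii) can be invoked. (Your device is exactly the familiar Banach-lattice argument, and it is perfectly sound whenever $\tau$ is metrizable; it just does not prove the theorem in the stated generality.)

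This is also where your route diverges from the paper's, and why the paper's route survives: the paper does \emph{not} argue intrinsically but passes to the Hausdorff completion $(\widehat{G},\widehat{\tau})$ of Theorem~\ref{theorem5.1} and Corollary~\ref{corollary5.1}. Given $0\le\widehat{x}\in\widehat{G}$, it chooses $x_n\in G_+$ with $x_n\xrightarrow{\widehat{\tau}}\widehat{x}$, truncates to get $\widehat{x}\wedge x_n\xrightarrow{\widehat{\tau}}\widehat{x}$ inside the order interval $[0,\widehat{x}]$, and then replaces this sequence by its finite suprema $y_n=\sup_{1\le k\le n}\widehat{x}\wedge x_k$, obtaining an increasing $\tau$-Cauchy sequence converging to $\widehat{x}$; hypothesis (ii) then forces $\widehat{x}\in G$, whence $\widehat{G}=G$. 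The essential difference is that the completion supplies the limit object $\widehat{x}$ in advance, so order-boundedness of the approximating sequence comes for free from truncation at $\widehat{x}$, and no summability of differences is ever needed. If you want to keep an intrinsic, completion-free proof, you must find a substitute for the partial-sum trapping device that works without first countability; the $\mathbb{R}^{\mathbb{R}}$ example above shows that partial sums of absolute differences cannot be that substitute.
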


\begin{proof}
(i) $\Longrightarrow$ (ii) is trivial. We show (ii) $\Longrightarrow$ (i). Let $(\widehat{G}, \widehat{\tau})$ be the unique Hausdorff toplogical completion of $(G, \tau)$. We need to show $G=\widehat{G}$. To this end, take any $\widehat{x}\in \widehat{G}$. Without loss of generality, we may assume $\widehat{x}\geq 0$.
Choose a sequence $\{x_n\}$ in $G_+$ such that $x_n\xrightarrow {\widehat{\tau}}\widehat{x}$. Then the truncated sequence $\{\widehat{x}\wedge x_n\}$ satisfies $ \widehat{x}\wedge x_n \xrightarrow{\widehat{\tau}} \widehat{x}$. Hence, the sequence $\{\widehat{x}\wedge x_n\}$ is a $\tau$-Cauchy sequence; it is also clear that this sequence is contained in the order interval $[0, \widehat{x}]$. Next, we form a new sequence $\{y_n\}$ by taking finite suprema of $\{\widehat{x}\wedge x_n\}$, that is, for each $n$ we define
\begin{equation*}
y_n=\sup_{1\leq k\leq n} \widehat{x}\wedge x_k.
\end{equation*}
Then $\{y_n\}$ is an increasing $\tau$-Cauchy sequence and $y_n\in [0, \widehat{x}]$ for all $n$.
Since $y_n\xrightarrow{\widehat{\tau}} \widehat{x}$, it follows from the hypothesis that
$\widehat{x}\in G$, proving that $\widehat{G}\subset G$. Therefore, $G=\widehat{G}$.
\end{proof}

\section*{Acknowledgments}

\noindent Thanks are due to the anonymous referee for comments and suggestions which led to several improvements in this article.

\bibliographystyle{amsplain}

\begin{thebibliography}{n} 

\bibitem{Aliprantis1} Aliprantis, C.D. (1974). On the completion of Hausdorff locally solid Riesz spaces,  \emph{Trans. of Amer. Math. Soc.}, 196, 105-125.

\bibitem{AB1} Aliprantis, C.D. and Burkinshaw, O. (2003). \emph{Locally Solid Riesz Spaces with Applications to Economics}, Second Edition, Springer, Berlin, Heidelberg, New York.

\bibitem{AB2} Aliprantis, C.D. and Burkinshaw, O. (1985). \emph{Positive Operators}, Springer, Berlin, Heidelberg, New York.

\bibitem{AT} Arhangel'skii, A. and Tkachenko, M. (2008). \emph{Topological groups and related structures}, Atlantic Press, Amsterdam, Paris.

\bibitem{Ball1} Ball, R.N. (1979). Topological lattice ordered groups, \emph{Pacific Journal of Mathematics}, 83 (1), 1-26.

\bibitem{Ball2} Ball, R.N. (1980). Convergence and Cauchy structures on lattice ordered groups, \emph{Trans. Amer. Math. Soc.}, 259 (2), 357-392s.

\bibitem{Baer} Baer, R. (1937). Abelian groups without elements of finite order, \emph{Duke Math. Journal}, 68-122.

\bibitem{BNS} Beckenstein, E., Narici, L. and Suffel, C. (1977). \emph{Topological Algebras}, North-Holland, Amsterdam.


\bibitem{Birkhoff1} Birkhoff, G. (1941). Lattice-ordered groups, \emph{Annals of Mathematics}, 43 (2), 298-331.

\bibitem{Birkhoff2} Birkhoff, G. (1967). \emph{Lattice Theory}, Third Edition, Amer. Math. Soc. Colloq. Publ. $\#$ 25, Providence, Rhode Island.


\bibitem{Bourbaki} Bourbaki, N. (1987). \emph{Elements of Mathematics: Topological Vectors Spaces}, Chapters 1-5, Springer, Berlin, New York.



\bibitem{Clifford} Clifford, A.H. (1940). Partially ordered abelian groups, \emph{Annals of Mathematics}, 465-467.


\bibitem{Fremlin} Fremlin, D.H. (1974). \emph{Topological Riesz Spaces and Measure Theorey}, Cambridge University Press, Cambridge.

\bibitem{Fremlin2} Fremlin, D.H. (1972). On the completion of locally solid vector lattice, \emph{Pacfici J. Math.}, 43, 341-347.


\bibitem{Fuchs1} Fuchs, L. (1963). \emph{Partially Ordered Algebraic Systems}, Pergamon Press, Oxford, New York.
\bibitem{Fuchs2} Fuchs, L. (1965). Riesz vector spaces and Riesz algebra, \emph{S$\acute{e}$minaire DUBERIL-PISOT}, 19, 1-9.

\bibitem{Fuchs3} Fuchs, L. (1967) Riesz groups, \emph{Ann. Scuola Norm. Sup. Pisa.} 19, 1-34.


\bibitem{Goffman} Goffman, C. (1957). A lattice homomorphism of a lattice ordered group, \emph{Proc. of Amer. Math. Soc.}, 547-550.

\bibitem{Gusic} Gusi$\acute{c}$, I. (1988). A topology on lattice ordered groups, \emph{Proc. Amer. Math. Soc.}, 126 (9), 2593-2597.

\bibitem{Husain} Husain, T. (1966). \emph{Introduction to Topological Groups}, W.B. Sounders Company, Philadelphia, London.

\bibitem{Jaffard} Jaffard, P. (1953). Contribution $\grave{a}$l'$\acute{e}$tude des groups ordonn$\acute{e}$s, \emph{Journal de Math$\acute{e}$matiques Pures et Appliqu$\acute{e}$es}, 32, 203-280.

\bibitem{Kawai}  Kawai, I.(1957). Locally convex lattices, \emph{J.Math. Soc. Japan}, 9, 281-314.

\bibitem{KR} Khan, A.R. and Rowlands, K. (2007). On locally solid topological lattice groups, \emph{Czechoslovak Mathematical Journal}, 57 (3), 963-973.


\bibitem{LZ} Luxemburg, W.A.J. and Zaanen, A.C. (1971). \emph{Riesz Spaces, I}, North-Holland, Amsterdam.

\bibitem{Nakano} Nakano, H. (1953). Linear topologies on semi-ordered linear spaces, \emph{J. Fac. Sci. Hokkaido. Univ. Series I.}, 12, 87-104.

\bibitem{Namioka} Namioka, I. (1957). Partially ordred linear topological spaces, \emph{Memoirs of the American Mathematical Society}, 24.
.
\bibitem{Pierce} Pierce, R. (1954). Homomorphisms of semi-groups, \emph{Annals of Mathematics}, 59 (2), 287-291.

\bibitem{Pontrjagin} Pontrjagin, L. (1946). \emph{Topological Groups}, Translated by Emma Lehmer, Princeton University Press, Princeton, NJ.



\bibitem{Redfield} Redfield, R.H. (1974). A topology for a lattice-ordered group, \emph{Trans. Amer. Math. Soc.}, 187, 103-125.

\bibitem{Roberts} Roberts, G.T. (1952). Topologies in vector lattices, \emph{Math. Proc. Cambridge Phil Soc.}, 48, 533-546.

\bibitem{Smarda1} $\breve{S}$marda, B. (1967). Topologies in $l$-groups, \emph{Archivum Mathematicum}, 3 (2), 69-81.

\bibitem{Smarda2} $\breve{S}$marda, B. (1969). Some types of topological $l$-groups, \emph{Publ. Fac. Sci. Univ. J. E. Purkyne}, 507, 341-352.

\bibitem{Teller} Teller, J.R., (1964). On the extensions of lattice-ordered groups, \emph{Pacific Journal of Mathematics}, 14 (2), 709-718.

\bibitem{Willard} Willard, S. (1970). \emph{General Topology}, Addison-Welsley, Reading, Massachusetts.

\bibitem{Zaanen} Zaanen, A.C.(1997) \emph{Introduction to Operator Theory in Riesz Spaces}, Springer, Berlin, Heidelberg, New York.

\end{thebibliography}

\end{document}